\newcommand{\wt}{\widetilde}
\newcommand{\wh}{\widehat}
\newcommand{\End}{\mathrm{End}}
\newcommand{\Hom}{\mathrm{Hom}}
\newcommand{\Mod}{\mathrm{-Mod}}
\newcommand{\Gr}{\mathrm{Gr}}
\newcommand{\lp}{\left(}
\newcommand{\rp}{\right)}
\newcommand{\QCoh}{\mathrm{QCoh}}
\newcommand{\IndCoh}{\mathrm{IndCoh}}
\newcommand{\Rep}{\mathrm{Rep}}
\newcommand{\lpp}{(\!(}
\newcommand{\rpp}{)\!)}
\newcommand{\lbb}{[\![}
\newcommand{\rbb}{]\!]}
\newcommand{\pd}{{\partial}}
\newcommand{\lag}{\langle}
\newcommand{\rag}{\rangle}
\newcommand{\C}{\mathbb C}
\newcommand{\fd}{\mathfrak{d}}
\newcommand{\fg}{\mathfrak{g}}
\newcommand{\fL}{\mathfrak{L}}
\newcommand{\fD}{\mathfrak{D}}
\newcommand{\fG}{\mathfrak{G}}
\newcommand{\CC}{{\mathcal C}}
\newcommand{\CE}{{\mathcal E}}
\newcommand{\CF}{{\mathcal F}}
\newcommand{\CG}{{\mathcal G}}
\newcommand{\CH}{{\mathcal H}}
\newcommand{\CK}{{\mathcal K}}
\newcommand{\CL}{{\mathcal L}}
\newcommand{\CN}{{\mathcal N}}
\newcommand{\CO}{{\mathcal O}}
\newcommand{\CP}{{\mathcal P}}
\newcommand{\CR}{{\mathcal R}}
\newcommand{\CS}{{\mathcal S}}
\newcommand{\CT}{{\mathcal T}}
\newcommand{\CY}{{\mathcal Y}}
\newcommand{\be}{\begin{equation}}
\newcommand{\ee}{\end{equation}}
\newcommand{\btik}{\begin{tikzcd}}
\newcommand{\etik}{\end{tikzcd}}
\newcommand{\mf}{\mathfrak}
\newcommand{\Bun}{\textnormal{Bun}_G}
\newcommand{\BunL}{\textnormal{Bun}_{{}^L\!G}}
\newcommand{\gout}{\fg_{\textnormal{out}}}
\newcommand{\rltensor}{{}^r\!\!\otimes^l}
\newcommand{\sttensor}{{}^s\!\!\otimes^t}
\newcommand{\sstensor}{{}^s\!\!\otimes^s}
\newcommand{\gkl}{\mathfrak{g}(\CK)\lbb\lambda\rbb}
\newcommand{\gol}{\mathfrak{g}(\CO)\lbb\lambda\rbb}
\newtheorem{Def}{Definition}[section]
\newtheorem{Thm}[Def]{Theorem}
\newtheorem{Prop}[Def]{Proposition}
\newtheorem{Cor}[Def]{Corollary}
\newtheorem{Lem}[Def]{Lemma}
\newtheorem{Rem}[Def]{Remark}
\newtheorem{Exp}[Def]{Example}
\numberwithin{equation}{section}
\title{Quantum groupoids from moduli spaces of $G$-bundles}
\author{Raschid Abedin \& Wenjun Niu}
\date{\today}
\begin{document}

\maketitle

\abstract{In a previous work, we have constructed the Yangian $Y_\hbar (\fd)$ of the cotangent Lie algebra $\fd=T^*\fg$ for a simple Lie algebra $\fg$, from the geometry of the equivariant affine Grassmanian associated to $G$ with $\fg=\mathrm{Lie}(G)$. In this paper, we construct a quantum groupoid $\Upsilon_\hbar^\sigma (\fd)$ associated to $\fd$ over a formal neighbourhood of the moduli space of \(G\)-bundles and show that it is a dynamical twist of \(Y_\hbar(\fd)\). Using this dynamical twist, we construct a dynamical quantum spectral $R$-matrix, which essentially controls the meromorphic braiding of $\Upsilon_\hbar^\sigma (\fd)$. 

This construction is motivated by the Hecke action of the equivariant affine Grassmanian on the moduli space of \(G\)-bundles in the setting of coherent sheaves. Heuristically speaking, the quantum groupoid $\Upsilon_\hbar^\sigma (\fd)$ controls this action at a formal neighbourhood of a regularly stable $G$-bundle. From the work of Costello-Witten-Yamazaki, it is expected that this Hecke action should give rise to a dynamical integrable system. Our result gives a mathematical confirmation of this and an explicit $R$-matrix underlying the integrability. 
}

\newpage

\tableofcontents

\newpage

\section{Introduction}

Let $G$ be a simple complex Lie group with Lie algebra $\fg$ and let $\fd\coloneqq T^*\fg=\fg\ltimes \fg^*$ be the \textit{cotangent Lie algebra} of $\fg$, i.e.\ the Lie algebra of the cotangent Lie group \(T^*G\). In \cite{abedin2024yangian}, we constructed a quantum group $Y_\hbar (\fd)$, which we called the Yangian of \(\fd\), with the following properties:

\begin{enumerate}
    \item As a Hopf algebra, it is the, up to isomorphism, unique bigraded quantization of the Lie bialgebra structure on $\fd(\CO)$ given by the 1-cocycle \(\delta\) with respect to Yang's r-matrix $\gamma (t_1,t_2)=\frac{C}{t_1-t_2}$. Here, $C$ is the quadratic Casimir element of $\fd$ with respect to the canonical evaluation pairing \(\langle,\rangle \colon \fd \times \fd \to \C\). 

    \item There is a spectral $R$-matrix $\CR_\gamma(z)\in Y_\hbar (\fd)^{\otimes 2}\lbb z^{-1}\rbb$ satisfying the quantum Yang-Baxter equation and such that
    \be
        \CR_\gamma(z)\lp \lp \tau_z\otimes 1\rp \Delta_\hbar^\gamma \rp \CR_\gamma(z)^{-1}=(\tau_z\otimes 1) \Delta_\hbar^{\gamma,\textnormal{op}}
    \ee
    holds in the dense subalgebra quantizing \((\fd[t],\delta)\). Here, \(\Delta^\gamma_\hbar\) is the comuliplication of \(Y_\hbar(\fd)\).
    
 \end{enumerate}

In this paper, we produce a dynamical version of \(Y_\hbar(\fd)\) over the formal neighborhood of a regularly stable point in the moduli space of \(G\)-bundles, by adapting the constructions from \cite{abedin2024yangian} to the language of Lie algebroids and by utilizing the classical dynamical spectral \(r\)-matrix constructed in \cite{felder_kzb} (see also \cite{abedin2024r}).

Our set-up is as follows. Let $\Bun (\Sigma)$ be the moduli stack of $G$-bundles over a smooth complex irreducible projective curve $\Sigma$. If one fixes a point $p\in \Sigma$, then the space $\Bun (\Sigma)$ can be described as a double quotient via the so-called ``loop-group uniformization":
\be
\Bun (\Sigma)= G(\CO)\!\setminus \!G(\CK)/ G(\Sigma^\circ),\qquad \Sigma^{\circ} \coloneqq\Sigma\setminus \{p\}.
\ee
Let $\CP\in \Bun (\Sigma)$ be a regularly stable point, i.e.\ a \(G\)-bundle whose automorphisms are precisely given by the center of \(G\), and let $B=\wh\Bun (\Sigma)_{\CP}$ be the formal neighbourhood around this point. Then the canonical projection $G(\CK)\to \Bun (\Sigma)$ given by loop-group uniformization admits a splitting $\sigma\colon B\to G(\CK)$ over \(B\). Let $R=\CO (B)$ be the topological ring of functions on $B$ and we chose local coordinates $\lambda=(\lambda_1,\ldots, \lambda_n)$ so that $R\cong \C\lbb\lambda\rbb$.

Combining the construction from \cite{felder_kzb,abedin2024r} and the skew-symmetrization method from \cite{abedin2024yangian}, we will show that one can assign to $(\Sigma, \CP, \sigma)$ a classical spectral dynamical $r$-matrix
\be
\rho(t_1,t_2;\lambda)\in \frac{C}{t_1-t_2} + (\fd \otimes \fd)[\![t_1,t_2]\!][\![\lambda]\!]
\ee 
i.e.\ a solution to the dynamical classical Yang-Baxter equation:
\begin{equation}\label{eq:intro_dynamicalCYBE_rho}
    \begin{split}
        &[\rho^{(12)}(t_1,t_2;\lambda),\rho^{(13)}(t_1,t_3;\lambda)] + [\rho^{(12)}(t_1,t_2;\lambda),\rho^{(23)}(t_2,t_3;\lambda)] + [\rho^{(32)}(t_3,t_2;\lambda),\rho^{(13)}(t_1,t_3;\lambda)] \\&= \sum_{\alpha = 1}^N \left(\omega^{(1)}_\alpha \partial_\alpha \rho^{(23)}(t_2,t_3;\lambda) - \omega_\alpha^{(2)}(t_2;\lambda)\partial_\alpha \rho^{(13)}(t_1,t_3;\lambda) +\omega_\alpha^{(3)}(t_3;\lambda)\partial_\alpha \rho^{(12)}(t_1,t_2;\lambda)\right).
    \end{split}
\end{equation}
This $\rho$ induces the structure of a Lie bialgebroid $\fd (\CO)\lbb\lambda\rbb$ over $B$. 

Our first result is that this Lie bialgebroid admits a canonical quantization to a Hopf algebroid.

\begin{Thm}

    Associated to the data of $(\Sigma, \CP, \sigma)$, there exists an explicit Hopf algebroid $\Upsilon_\hbar^\sigma (\fd)$ over $B$ that quantizes the Lie bialgebroid structure on $\fd (\CO)\lbb\lambda\rbb$ defined by \(\rho\). 
    \hfill \qedsymbol
\end{Thm}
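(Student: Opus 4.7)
The plan is to realize $\Upsilon^\sigma_\hbar(\fd)$ as a dynamical twist of the Yangian $Y_\hbar(\fd)$ from \cite{abedin2024yangian}. Concretely, its underlying algebra is $Y_\hbar(\fd)\,\wh\otimes\, R$, with source map $s\colon R\to Y_\hbar(\fd)\wh\otimes R$ the natural inclusion, target map $t$ defined in terms of $s$ and a dynamical twist, and coproduct obtained from the coproduct $\Delta^\gamma_\hbar$ of $Y_\hbar(\fd)$ by conjugation with an explicit element
$$
J(\lambda)\in \bigl(Y_\hbar(\fd)\otimes Y_\hbar(\fd)\bigr)\,\wh\otimes\, R.
$$
The semi-classical limit of $J(\lambda)$ is to be controlled by the difference $\rho(t_1,t_2;\lambda)-\gamma(t_1,t_2)$, so that $J(\lambda)$ encodes precisely the passage from Yang's $r$-matrix to the dynamical $r$-matrix $\rho$.

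The construction of $J(\lambda)$ proceeds in three steps. First, expand $\rho-\gamma$ in powers of $\lambda$ with coefficients in $(\fd\otimes\fd)\lbb t_1,t_2\rbb$, viewed inside the completed tensor square of the current algebra $\fd(\CO)$. Second, apply the skew-symmetrization method of \cite{abedin2024yangian} --- which already underlies the construction of $Y_\hbar(\fd)$ itself --- order by order in $\hbar$ to lift this classical datum to the quantized twist $J(\lambda)$. Third, define
$$
\Delta^{\mathrm{dyn}}_\hbar(x) = J(\lambda)\,\Delta^\gamma_\hbar(x)\,J(\lambda)^{-1}
$$
and extend the counit and antipode of $Y_\hbar(\fd)$ over $R$. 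The Hopf algebroid axioms then reduce to the shifted cocycle identity for $J(\lambda)$, the counital normalization of $J$, and the compatibility of the moment maps $s,t$ with $\Delta^{\mathrm{dyn}}_\hbar$, the last two being essentially formal once $J$ is in place.

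The main obstacle is the verification of the shifted (dynamical) cocycle identity for $J(\lambda)$, whose leading classical term is precisely \eqref{eq:intro_dynamicalCYBE_rho}. This is to be established inductively in $\hbar$. At each order the obstruction is governed by the classical cobracket applied to the next coefficient of $J(\lambda)$, and its vanishing should follow from the Lie bialgebroid structure on $\fd(\CO)\lbb\lambda\rbb$ induced by $\rho$, combined with the explicitness and rigidity of the bigraded quantization $Y_\hbar(\fd)$ established in \cite{abedin2024yangian}. Tracking the dynamical shifts $\lambda\mapsto \lambda+\hbar h^{(i)}$ consistently through this induction is the delicate bookkeeping that distinguishes this argument from the non-dynamical one, and it is here that the specific form of $J$ produced by skew-symmetrization is essential.

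Finally, the quantization claim is verified by taking the classical limit: quotienting by $\hbar$ yields $U(\fd(\CO))\wh\otimes R$, and the semi-classical cobracket extracted from $\Delta^{\mathrm{dyn}}_\hbar-\Delta^{\mathrm{dyn,op}}_\hbar$ modulo $\hbar^2$ coincides, by construction of $J(\lambda)$, with the cobracket of the Lie bialgebroid $\fd(\CO)\lbb\lambda\rbb$ defined by $\rho$. This identifies $\Upsilon^\sigma_\hbar(\fd)$ as the claimed quantization and completes the proof.
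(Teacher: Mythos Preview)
Your approach differs substantially from the paper's, and contains a genuine gap.

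The paper does \emph{not} construct $\Upsilon_\hbar^\sigma(\fd)$ by twisting $Y_\hbar(\fd)$. Rather, it builds the Hopf algebroid directly from the Lie algebroid splitting $\fG = \fG_+ \oplus \fG_-^\sigma$ (where $\fG = T_R \ltimes \fg(\CK)[\![\lambda]\!]$), generalizing the smash-product construction of $Y_\hbar(\fd)$ itself to the algebroid setting. The algebra is $U_R(\fG_+)[\![\hbar]\!]$ smashed with $S_R(\fG_-^{\sigma,\dagger})[\![\hbar]\!]$ via a dual action; the source and target maps come from the formal-groupoid interpretation of $S_R(\fG_-^{\sigma,\dagger})$; and the coproduct is defined as $\Delta_\hbar^\sigma(x) = \CE^{-1}\Delta(x)\CE$, where $\CE$ is an explicit exponential tensor representing the embedding $U_R(\fG_-^\sigma) \hookrightarrow U_R(\fG)$. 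All axioms are checked directly. The twist equivalence with the trivial algebroid $D_R \otimes Y_\hbar(\fd)[\![\lambda]\!]$ is a \emph{separate} theorem, proved afterward, and the twist $\CF = \CE_\gamma^{-1}\CE$ is again explicit, not produced by any inductive or cohomological procedure.

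Your proposal has two concrete problems. First, the inductive construction of $J(\lambda)$ is not an argument: you assert that at each order the obstruction ``should'' vanish by rigidity of $Y_\hbar(\fd)$, but rigidity establishes uniqueness of the quantization, not the existence of a twist between two given coproducts. The vanishing of the relevant obstructions is a cohomological statement you have not formulated, let alone proved; the paper sidesteps this entirely by writing down $\CE$ and $\CF$ in closed form from the splitting. Second, your underlying algebra $Y_\hbar(\fd)\,\wh\otimes\, R$ is too small: the paper's $\Upsilon_\hbar^\sigma(\fd)$ is $D_R \otimes_R Y_\hbar(\fd)[\![\lambda]\!]$, including the differential operators $D_R = U_R(T_R)$. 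The Lie bialgebroid being quantized is actually $\fD_+ = T_R \ltimes \fd(\CO)[\![\lambda]\!]$, and the twist $\CF$ decomposes as $F\Theta$ with $\Theta = \exp(\hbar \sum_\alpha \partial_\alpha \otimes \omega_\alpha)$ living genuinely in the larger algebroid. Without $D_R$, you cannot accommodate the dynamical shifts $\lambda \mapsto \lambda + \hbar\omega^{(i)}$ as inner conjugations, and the cocycle identity you hope to verify does not even make sense in $Y_\hbar(\fd)\,\wh\otimes\, R$ alone.
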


Our second result is that this Hopf algebroid $\Upsilon_\hbar^\sigma (\fd)$ can be obtained from the trivial Hopf algebroid $\Upsilon_\hbar (\fd):=Y_\hbar (\fd)\otimes D_B$ via a dynamical twist, where $D_B= U_R (T_B)$ is the universal enveloping algebra of the tangent Lie algebroid $T_B$ of $B$, i.e.\ its the algebra of differential operators on \(B\). More precisely, we show:

\begin{Thm}\label{thm:intro_dynamical_twist}
    There exists a dynamical quantum twist
    \be
\CF\in\Upsilon_\hbar (\fd)\sttensor_R\Upsilon_\hbar (\fd)
    \ee
that induces a twist-equivalence of the Hopf algebroids \(\Upsilon_\hbar (\fd)\) and \(\Upsilon_\hbar^\sigma (\fd)\) over $R$, i.e.:
\begin{enumerate}
    \item \(
       \Upsilon_\hbar (\fd)\cong \Upsilon_\hbar^\sigma (\fd)\) as \(R\)-algebras;
        
    \item \(\CF^{-1}\Delta_{\hbar}\CF = \Delta^\sigma_\hbar\);

    \item \((\Delta_\hbar \otimes 1)(\CF)\CF^{(12)} = (1 \otimes \Delta_\hbar)(\CF)\CF^{(23)}\). \hfill \qedsymbol
\end{enumerate}
\end{Thm}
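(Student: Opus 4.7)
The plan is to construct $\CF$ by quantizing the classical dynamical twist $f(\lambda):=\rho-\gamma$, i.e.\ the difference between the dynamical $r$-matrix associated to $(\Sigma,\CP,\sigma)$ and Yang's $r$-matrix. The first step is to verify that $f$ is a classical dynamical twist: combining the dynamical CYBE \eqref{eq:intro_dynamicalCYBE_rho} for $\rho$ with the ordinary CYBE for $\gamma$ implies that $f$ satisfies a shifted $2$-cocycle equation over the trivial Lie bialgebroid $\fd(\CO)\lbb\lambda\rbb$, and that twisting by $f$ at the classical level produces the Lie bialgebroid defined by $\rho$. This identifies $f$ as the desired classical limit and motivates the shape of $\CF$.

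Next, I would construct $\CF = 1+\hbar f+\hbar^2 F_2+\cdots$ inductively in $\Upsilon_\hbar(\fd)\sttensor_R\Upsilon_\hbar(\fd)$ (working in a dense subalgebra where the infinite sums are well-defined) by imposing the cocycle equation $(\Delta_\hbar\otimes 1)(\CF)\,\CF^{(12)}=(1\otimes\Delta_\hbar)(\CF)\,\CF^{(23)}$ order by order in $\hbar$. At each order $n+1$ the obstruction to extending $\CF$ is a Hochschild-type $2$-cocycle with values in $Y_\hbar(\fd)^{\otimes 3}\otimes_\C R$; its vanishing as a coboundary is to be established using the bigraded/PBW structure of $Y_\hbar(\fd)$ from \cite{abedin2024yangian}, flatness of the quantization, and the explicit form of $\rho$ from \cite{felder_kzb,abedin2024r}. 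The argument is modelled on Drinfeld/Etingof-Kazhdan-type quantization of classical $r$-matrices into universal twists.

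Once $\CF$ is built, property (3) holds by construction; property (1) follows because a Hopf-algebroid twist does not change the underlying $R$-algebra structure, so the identity on $Y_\hbar(\fd)\otimes D_B$ provides the isomorphism; property (2) is the definition $\Delta_\hbar^\sigma:=\CF^{-1}\Delta_\hbar\CF$, whose coassociativity follows from (3). To conclude that this twisted Hopf algebroid agrees with the $\Upsilon_\hbar^\sigma(\fd)$ of Theorem 1.1, I would use that both are flat quantizations of the same Lie bialgebroid defined by $\rho$ and invoke the uniqueness built into that theorem. The main obstacle will be the cohomological vanishing at each inductive step, made more delicate by the dynamical setting: the $R$-bimodule structure of $\Upsilon_\hbar(\fd)\sttensor_R\Upsilon_\hbar(\fd)$ introduces derivative terms from the anchor of $T_B$, and one must carefully choose a dense subalgebra stable under all Hopf algebroid operations so that the recursive correction terms remain well-defined.
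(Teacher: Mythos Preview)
Your approach is genuinely different from the paper's, and it has real gaps.

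The paper constructs $\CF$ \emph{explicitly}, not inductively. Recall that both $\Upsilon_\hbar(\fd)$ and $\Upsilon_\hbar^\sigma(\fd)$ are built from splittings $\fG = \fG_+ \oplus \fG_-$ of the same Lie algebroid $\fG = T_R \ltimes \fg(\CK)[\![\lambda]\!]$, using two different complements: $t^{-1}\fg[t^{-1}][\![\lambda]\!]$ for the trivial case and $\fG_-^\sigma$ for the $\sigma$-dependent one. Each splitting yields an exponential tensor $\CE_\gamma$ resp.\ $\CE$ in $U_R(\fG)[\![\hbar]\!]\otimes S_R(\fG_-^{\sigma,\dagger})[\![\hbar]\!]$ representing the embedding of the complement into $U_R(\fG)$ (Proposition~\ref{Prop:rembedTheta}). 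The twist is then simply $\CF \coloneqq \CE_\gamma^{-1}\CE$. One shows $\CF$ lands in $U_R(\fG_+)[\![\hbar]\!]\otimes S_R(\fG_-^{\sigma,\dagger})[\![\hbar]\!]$ by interpreting it as the change-of-splitting map $e^{\hbar y}\mapsto e^{\hbar x_2}$ coming from the factorization $e^{\hbar y}=e^{\hbar x_1}e^{\hbar x_2}$ with $x_1\in t^{-1}\fg[t^{-1}][\![\lambda]\!][\![\hbar]\!]$, $x_2\in\fG_+[\![\hbar]\!]$ (Lemma~\ref{Lem:CFCF'}). The cocycle identity and the twisting of $\Delta_\hbar^\gamma$ into $\Delta_\hbar^\sigma$ then follow by direct computation from the properties $(\Delta\otimes 1)\CE=\CE^{(13)}\CE^{(23)}$ and $(1\otimes\Delta_\hbar^\sigma)\CE=\CE^{(13)}\CE^{(12)}$ established in Lemma~\ref{CEdelta}, exactly parallel to the Lie-algebra case in \cite{abedin2024yangian}. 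No induction and no cohomology are needed.

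Your inductive scheme has two concrete problems. First, the cohomological vanishing you need at each order is not available: the rigidity argument for $Y_\hbar(\fd)$ in \cite{abedin2024yangian} uses a bigrading that has no evident analogue once the $\lambda$-dependence and the anchor of $T_R$ enter, and you offer no substitute. Second, even if you produced some $\CF$ satisfying the cocycle condition, your final step appeals to ``the uniqueness built into Theorem~1.1'' to identify $\CF^{-1}\Delta_\hbar^\gamma\CF$ with the specific $\Delta_\hbar^\sigma$ constructed in Section~\ref{sec:constructHopf}. But Theorem~1.1 asserts only existence of an explicit quantization; no uniqueness of Lie bialgebroid quantizations is stated or proved anywhere in the paper, so this identification is unsupported. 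The paper avoids this entirely because its $\CF$ is built from the very same tensor $\CE$ that was used to \emph{define} $\Delta_\hbar^\sigma$, so property~(2) is immediate.
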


A more careful analysis of the twist \(\CF\) shows that it decomposes as \(\CF = F \Theta\), where
    \be
    F\in Y_\hbar (\fd)\lbb\lambda\rbb\sttensor Y_\hbar (\fd)\lbb\lambda\rbb~~  \textnormal{ and }~~  \Theta=\exp \left(\hbar \sum_{\alpha = 1}^N \pd_{\alpha}\otimes \omega_\alpha\right)
    \ee
    for \(\pd_\alpha = \pd/\pd \lambda_\alpha\) and some $\omega_\alpha\in \fg^*(\CO)\lbb\lambda\rbb$. 
Moreover, the tensor $F$ satisfies a dynamical version of the twisting equation in Theorem \ref{thm:intro_dynamical_twist}.3, namely: 
    \be
    (1\otimes \Delta_\hbar) (F)F^{(23)}(\lambda)=(\Delta_\hbar\otimes 1)(F) F^{(12)}(\lambda+\hbar \omega^3).
    \ee
Here, we used dynamical notation, which pragmatically in our setting boils down to
\begin{equation}
    F^{(12)}(\lambda+\hbar \omega^3) = \Theta^{(13),-1}\Theta^{(23),-1}F^{(12)}(\lambda)\Theta^{(23)}\Theta^{(13)}.
\end{equation}
Using this properties of the twist, we can construct a dynamical spectral $R$-matrix from the spectral $R$-matrix of $Y_\hbar (\fd)$ and \(\CF\). 

\begin{Thm}\label{thm:intro_quantum_rmatrix}
    Let $\CR(z, w;\lambda)$ be the $Y_\hbar (\fd)\otimes Y_\hbar (\fd)$-valued tensor series over $B$ defined by
    \be
\CR(z, w;\lambda):=\lp \tau_z\otimes \tau_w (F)\rp^{(21), -1} \CR_\gamma(z-w) (\tau_z\otimes \tau_w) (F). 
    \ee
This series is well-defined when acting on a tensor product of smooth modules of $Y_\hbar (\fd)$, and satisfies the following dynamical version of the quantum Yang-Baxter equation:
\be
\begin{aligned}
    \CR^{(12)}(z_1, z_2; \lambda) &\CR^{(13)}(z_1, z_3; \lambda+\hbar \omega^{(2)})\CR^{(23)} (z_2, z_3; \lambda)\\ &=\CR^{(23)} (z_2, z_3; \lambda+\hbar\omega^{(1)}) \CR^{(13)}(z_1, z_3; \lambda)\CR^{(12)}(z_1, z_2; \lambda+\hbar\omega^{(3)}). 
\end{aligned}
\ee
\hfill \qedsymbol
\end{Thm}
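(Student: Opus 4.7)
The plan is to deduce the dynamical spectral QYBE for $\CR(z,w;\lambda)$ from two inputs already at hand: the ordinary spectral QYBE satisfied by $\CR_\gamma(z)$ established in \cite{abedin2024yangian}, and the dynamical cocycle equation satisfied by $F$ that follows from Theorem \ref{thm:intro_dynamical_twist}. This is the spectral, dynamical analogue of the standard principle that twisting a quasi-triangular R-matrix by a dynamical cocycle produces a new R-matrix obeying the dynamical QYBE, with the shifts in the spectator parameters controlled by the Abelian part of the twist.

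First I would verify that $\CR(z,w;\lambda)$ is well-defined as a series of operators on $M \otimes N$ for any pair of smooth $Y_\hbar(\fd)$-modules $M, N$. The factor $\CR_\gamma(z-w) \in Y_\hbar(\fd)^{\otimes 2}\lbb (z-w)^{-1}\rbb$ acts by the construction of \cite{abedin2024yangian}, while the factors $(\tau_z \otimes \tau_w)(F^{\pm 1})$ act because $F$ is a formal series in $\hbar$ and $\lambda$ whose coefficients, after applying $\tau_z \otimes \tau_w$, yield well-defined operators at each finite order in $\hbar$ and $\lambda$. Multiplying these, one obtains a bona fide $\End(M\otimes N)$-valued Laurent series in $(z-w)$ with coefficients in $R$.

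The algebraic heart of the proof is now familiar. Apply $\tau_{z_1} \otimes \tau_{z_2} \otimes \tau_{z_3}$ to the dynamical cocycle equation
\[
(1\otimes \Delta_\hbar)(F)\,F^{(23)}(\lambda) = (\Delta_\hbar\otimes 1)(F)\,F^{(12)}(\lambda+\hbar\omega^3),
\]
and to its opposite; combined with the intertwining relation
\[
\CR_\gamma(z-w)\,(\tau_z\otimes \tau_w)\Delta_\hbar^\gamma = (\tau_z\otimes \tau_w)\Delta_\hbar^{\gamma,\textnormal{op}}\,\CR_\gamma(z-w),
\]
this allows one to commute the $(\tau_{z_i}\otimes \tau_{z_j})(F^{\pm 1})$ factors past the $\CR_\gamma(z_i-z_j)$ factors occurring on either side of the claimed dynamical QYBE. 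After these rearrangements, both sides reduce to expressions of the shape $A\cdot \CR_\gamma^{(12)}(z_1-z_2)\CR_\gamma^{(13)}(z_1-z_3)\CR_\gamma^{(23)}(z_2-z_3)\cdot B$ and $A\cdot \CR_\gamma^{(23)}(z_2-z_3)\CR_\gamma^{(13)}(z_1-z_3)\CR_\gamma^{(12)}(z_1-z_2)\cdot B$, respectively, with the same prefactors $A,B$ built from $(\tau_{z_i}\otimes \tau_{z_j})(F^{\pm 1})$. The dynamical shifts $\lambda + \hbar \omega^{(k)}$ on both sides are precisely those needed to absorb the conjugations by $\Theta^{(ik)}$ that are produced in this rearrangement, in accordance with the formula $F^{(12)}(\lambda+\hbar\omega^3) = \Theta^{(13),-1}\Theta^{(23),-1}F^{(12)}(\lambda)\Theta^{(23)}\Theta^{(13)}$ recalled in the excerpt. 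Applying the ordinary spectral QYBE for $\CR_\gamma(z)$ then concludes.

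The main obstacle will be the careful bookkeeping of dynamical shifts across three spectral parameters. One must verify that the spectral evaluations $\tau_{z_i}$ commute with the Abelian twist $\Theta=\exp\bigl(\hbar\sum_\alpha \pd_\alpha \otimes \omega_\alpha\bigr)$ in a manner consistent with the shift formula above; this boils down to the observation that $\omega_\alpha \in \fg^*(\CO)\lbb\lambda\rbb$, so that $\tau_z(\omega_\alpha)$ is a well-defined series whose action on smooth modules is controlled. A related subtlety is that the intertwining and cocycle relations a priori hold only in a dense subalgebra of $Y_\hbar(\fd)^{\otimes n}$; promoting them to operator identities on $M_1\otimes M_2\otimes M_3$ relies on the well-definedness established in the first step and on the smoothness hypothesis for the modules.
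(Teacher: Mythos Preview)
Your overall strategy—deduce the dynamical QYBE from the spectral QYBE for $\CR_\gamma$ together with the dynamical cocycle for $F$—is the right one and matches the paper's in spirit, but the mechanism you invoke has a gap. The intertwining relation $\CR_\gamma(z)\,(\tau_z\otimes 1)\Delta_\hbar^\gamma = (\tau_z\otimes 1)\Delta_\hbar^{\gamma,\mathrm{op}}\,\CR_\gamma(z)$ tells you how $\CR_\gamma$ conjugates elements of the form $\Delta_\hbar^\gamma(x)$; it does not let you ``commute $F$ past $\CR_\gamma$'', since $F$ is a fixed tensor, not in the image of the coproduct. What the argument actually needs are the hexagon (factorization) identities
\[
(\Delta_{\hbar,z_1-z_2,0}^\gamma\otimes 1)(\CR_\gamma(z_2-z_3)) = \CR_\gamma^{(13)}(z_1-z_3)\,\CR_\gamma^{(23)}(z_2-z_3)
\]
and its partner, which the paper invokes from \cite[Theorem 4.22]{abedin2024yangian} at the very start of its proof (Theorem \ref{thm:QDYBE}).

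With those in hand, the paper does not carry out a direct ``sandwich'' reduction of the form you describe (both sides equal $A\cdot(\text{QYBE for }\CR_\gamma)\cdot B$ with identical outer factors). Because $F$ satisfies only a \emph{dynamical} cocycle, twisting the spectral coproduct by $F$ produces a genuine quasi-Hopf structure with nontrivial associator
\[
\Phi(z_1,z_2,z_3;\lambda)=F^{(12),-1}(z_1,z_2;\lambda+\hbar\omega^{(3)})\,F^{(12)}(z_1,z_2;\lambda),
\]
and the paper appeals to the general argument of Babelon--Bernard--Billey \cite{babelon1996quasi} (this is where the hexagon identities for $\CR_\gamma$ enter) to conclude that $\CR$ satisfies the \emph{quasi}-Yang--Baxter equation with this $\Phi$. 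The specific form of $\Phi$ then gives, by a one-line check, $\Phi^{(213)}\CR^{(12)}(\lambda)\Phi^{(123),-1}=\CR^{(12)}(\lambda+\hbar\omega^{(3)})$, and substituting this and its permutations into the quasi-YBE yields exactly the dynamical QYBE. Your remarks about $\Theta$-conjugations producing the shifts $\lambda+\hbar\omega^{(k)}$ are correct, but they describe this final substitution step, not a reduction to the ordinary QYBE for $\CR_\gamma$ sandwiched between fixed factors.
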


One should think of $z, w$ as coordinates in a neighborhood of $p$, with which we defined the projection of $G(\CK)$ to $\Bun (\Sigma)$ and its local splitting $\sigma$. 

\begin{Rem}\label{Rem:topology}
    We must remark that all algebraic operations involving formal series in \(t, \lambda = (\lambda_1,\dots,\lambda_n)\), and \(\hbar\) are taken to be topologically with respect to the corresponding limit topologies. For instance, \(U(-)\) denotes the completed universal enveloping algebra, tensor products \(-\otimes-\) denotes the completed tensor product, and \((-)^*\) denotes the continuous dual. In particular, identities like \(\fd(\CO) \otimes \fd(\CO) \cong (\fd \otimes \fd)[\![t_1,t_2]\!]\), \(\fg[\![t]\!]^* \cong t^{-1}\fg[t^{-1}]\), and \(Y_\hbar(\fd)[\![\lambda]\!] = Y_\hbar(\fd)[\![\lambda]\!]\) hold. The reader is cautioned to keep this convention in mind while reading this document.
\end{Rem}

\subsection{Mathematical motivation}\label{subsec:mathmotive}
The mathematical motivation of this work comes from two places. Firstly in understanding the algebraic structures underlying Hecke modifications, a vital part of the geometric Langlands correspondence, better and secondly in finding an \(\hbar\)-deformation of the dynamical \(r\)-matrix and the KZB connection constructed from them in \cite{felder_kzb}. Both of these motivations are closely related: the KZB connection provides a deformation of the quadratic quantized Hitchin Hamilitonians and these are integral to the Beilinson-Drinfeld approach to the geometric Langlands correspondence; see \cite{beilinson_drinfeld_quantization}.

\subsubsection{Hecke modifications and the geometric Langlands correspondence}
The geometric Langlands correspondence (GLC in short) \cite{beilinson1991quantization}, now given a full proof by \cite{GLC1, GLC2, GLC3, GLC4, GLC5}, roughly states an equivalence of derived categories (see \cite{frenkel2002geometric} for details)
\be\label{eq:GLCglobal}
\mathrm{DMod}_{\frac{1}{2}} \lp \Bun (\Sigma) \rp\simeq  \IndCoh_{\text{nilp}} \lp \mathrm{LocSys}_{{}^L \! G} (\Sigma) \rp,
\ee
which is compatible with the action of the symmetric monoidal category $\mathrm{Rep}({}^L\! G)$ (or its factorization version over $\Sigma$) on both sides. The action on the right is given by the localization functor \cite[Section 4.3]{gaitsgory2013outline}, and on the left via the Satake equivalence
\be\label{eq:Satake}
\CS: \mathrm{Perv} (G(\CO)\!\setminus\! \Gr_G)\simeq \Rep ({}^L\! G)
\ee
and the so-called \textit{Hecke modifications}.

Hecke modifications can be understood geometrically via the loop-group uniformization of $\Bun (\Sigma)$. If one represent the stack as
\be
\Bun (\Sigma)= G(\CO)\!\setminus \! G(\CK)/G(\Sigma^\circ), \qquad \Sigma^\circ=\Sigma\setminus \{p\},
\ee
then the action of $\mathrm{Perv} (G(\CO)\!\setminus\! \Gr_G)$ is given geometrically by push-pull along the diagram:
\be\label{eq:Heckemod}
\btik
 &  G(\CO)\!\setminus \! (G(\CK)\times_{G(\CO)} G(\CK))/G(\Sigma^o)\arrow[dr] \arrow[dl]& \\
G(\CO)\!\setminus \! \Gr_G\times \Bun & & \Bun 
\etik.
\ee
The original version of the geometric Langlands correspondence, as stated in \cite{beilinson1991quantization}, is to find a sheaf $\CF$ on $\Bun (\Sigma)$ with the property that it is an ``eigenvector" of the action of $ \mathrm{Perv} (G(\CO)\!\setminus\! \Gr_G)$, namely:
\be
\ell * \CF \cong \CS (\ell) \otimes \CF,\qquad \forall \ell \in \mathrm{Perv} (G(\CO)\!\setminus\! \Gr_G). 
\ee

As a classical limit of the GLC \eqref{eq:GLCglobal}, it is conjectured in \cite{Donagi:2006cr} and \cite{Ben-Zvi:2016mrh} that there should be an equivalence of the form (coherent GLC):
\be\label{eq:coherentGLC}
\QCoh (T^* \Bun (\Sigma))\simeq \QCoh (T^* \BunL (\Sigma)).
\ee
It is likely that one needs to modify the categories above (just as in the case of the original geometric Langlands conjecture), and one candidate was proposed in \cite{Padurariu:2024zff}. 

In the above generalization, the action of Hecke modifications is intuitively given by a similar diagram as in equation \eqref{eq:Heckemod}, replacing all stacks involved by their cotangent stack. However, due to infinite-dimensionality of the stacks involved, it is not easy to define the category of sheaves on $T^*\Bun (\Sigma)$ or the push-pull functors on coherent sheaves. 

Our work was motivated by the study of Hecke modifications in a slightly different setting (namely $\CN=2$ pure gauge theory). More specifically, given $p\in \Sigma$, one can consider Hecke modifications at $p$ in equation \eqref{eq:Heckemod}, applied to $\QCoh$ directly rather than $\mathrm{DMod}$. This should define a monoidal functor
\be\label{eq:CoherentHecke}
\CF_p: \QCoh_{G(\CO)} (\Gr_G)\longrightarrow \End (\QCoh (\Bun (\Sigma))),
\ee
where the RHS is the category of endo-functors of $\QCoh (\Bun (\Sigma))$. The problem that motivated this paper is the question of what algebraic structure controls this monoidal functor. Understanding this problem will help understand the action of Hecke modifications in the coherent-setting, as well as the coherent GLC conjecture \eqref{eq:coherentGLC}. 

Again due to non-smoothness and infinite-dimensionality of the stacks involved, this question is not easy to answer. In this paper, we simplify the situation by working perturbatively, namely at the formal neighborhood \(B\) of $\Bun (\Sigma)$ around a regularly stable \(G\)-bundle \(\CP\), as well as over the formal neighborhood of the identity coset in $\Gr_G$. Very roughly, the Hopf algebroid $\Upsilon_\hbar^\sigma (\fd)$ is what controls the action of $\CF_p$ in the sense that the functor $\CF_p$, restricted to the formal completion $\wh \Gr_G$, factors to modules of this Hopf algebroid. In other words, it induces a functor
\be
\wt \CF_p: \QCoh_{G(\CO)} (\wh \Gr_G)\longrightarrow \Upsilon_\hbar^\sigma (\fd)\Mod
\ee
over \(B\). Moreover, we can understand this Hopf algebroid via a dynamical twist, and construct a quantum integrable system using the Yangian algebra $Y_\hbar (\fd)$. 

In a future project, we will go beyond the perturbative setting, by considering formal completion of $\Gr_G$ along other $G(\CO)$-orbits. We hope that this will lead to applications in the coherent GLC conjecture. 

\subsubsection{Quantization of Hitchin systems and the KZB equation}
The cotangent stack $T^*\Bun (\Sigma)$, also known as the moduli space of \textit{Higgs bundles} or \emph{Hitchins moduli space}, is an important geometric object in the area of classical integrability. It supports a natural integrable system, the \emph{Hitchin system}, first constructed in \cite{Hitchin:1987mz} as follows. Consider the Hitchin fibration
\be
p \colon T^* \Bun (\Sigma)\longrightarrow \bigoplus_{1\leq i\leq n}\Gamma (\Sigma, \Omega_{\Sigma}^{d_i}) \coloneqq H_\Sigma,
\ee
which is induced by the invariant polynomials of \(\fg\). This fibration corresponds to an embedding \(\CO(H_\Sigma) \to \CO( T^* \Bun (\Sigma))\). The image of this map, which consists of the so-called \emph{Hitchin Hamiltonians}, provides a maximal subspace of independent Poisson-commuting functions. 
Here $n=\mathrm{rk}(\fg)$ and $d_i=e_i+1$, where $e_i$ are the exponents of $\fg$. Furthermore, the Hitchin fibration has generically fibers which are essentially abelian and hence this system is in fact algebraically completely integrable. 

In the Beilinson-Drinfeld approach \cite{beilinson_drinfeld_quantization} to the geometric Langlands correspondence, an integral step is the quantization of the Hitchin Hamiltonians to differential operators on \(\Bun(\Sigma)\). The spectrum of these differential operators identifies with subspace of \(({}^L\! G)_{\textnormal{ad}}\)-local systems, the so-called opers, and this fact can be used to obtain a partial correspondence in the vein of \eqref{eq:GLCglobal}.

In genus 0, i.e.\ for \(\Sigma = \mathbb{P}^1\), where one has to add punctures to make the setting interesting, the Hitchin systems are also known as Garnier systems or classical Gaudin systems, and the quantization of these are the so-called Gaudin systems. Similarly, the quadratic Hitchin Hamiltonians are also known as Gaudin Hamiltonians. These admit a natural deformation through the Knizhnik-Zamolodchikov (KZ) connection and the commutativity of these operators follows from the so-called KZ equation. The KZ connection can thereby be described explicitly using Yang's \(r\)-matrix and the KZ equation follows from the classical Yang-Baxter equation. 

In higher genus, the quantized Hitchin Hamiltonians can be deformed to the Knizhnik-Zamolodchikov-Bernard (KZB) connection. These can be described explicitly using a classical dynamical spectral \(r\)-matrix and the KZB equation follows from the dynamical classical Yang-Baxter equation; see \cite{felder_kzb}. In \cite{felder_kzb}, the question for a quantization of this dynamical classical \(r\)-matrix was posed and it is natural to ask if the arising quantization gives rise to a quantum KZB equation, i.e.\ a particular difference equation that quantizes the KZB equation.

In this work, we provide an answer to the first part of the questions posed in \cite{felder_kzb}: we construct a quantization of the skew-symmetrization \(\rho\) of the original classical dynamical \(r\)-matrix, namely \(\CR(z,w;\lambda)\) from Theorem \ref{thm:intro_quantum_rmatrix}. Observe that the \(r\)-matrix from \cite{felder_kzb} is not skew-symmetric and therefore some kind of skew-symmetrization is expected in order to make a quantization to a quantum \(R\)-matrix possible. In fact, skew-symmetry over \(\fd\) instead of over the simple Lie algebra \(\fg\) is necessary in the proper classical dynamical setting in higher genus, since the classification from \cite{etingof_varchenko} suggests that skew-symmetric dynamical \(r\)-matrices over simple Lie algebras are a genus one phenomenon. 

In future work, it would be interesting if our setting admits a quantum KZB equation and if this leads to applications in the GLC.

\subsection{Physical motivation}

The physics motivation behind our work is the study of integrable systems arising from 4d holomorphic-topological (HT) theories, and is closely related to the mathematical set-ups in Section \ref{subsec:mathmotive}. In \cite{costello2014integrable}, the author explained that a mixed holomorphic-topological field theory $\CT^{4d}$ in 4 dimensions can give rise to integrable lattice models. This was further ellaborated and generalized in \cite{costello2018gauge, costello2018gauge2, costello2019gauge}. The rough idea is as follows. 

\begin{enumerate}
    \item The theory $\CT^{4d}$ gives rise to a category, physically the category of line operators $\CL$. This $\CL$ has the structure of a chiral monoidal category. The monoidal product comes from operator collision in the topological direction, and the chiral structure comes from operator product expansion (OPE) in the holomorphic direction. 

    \item Take any smooth complex curve $\Sigma$, dimensional reduction of $\CT$ along $\Sigma$ gives rise to a 2d TQFT, usually denoted by $\int_{\Sigma}\CT^{4d}$.  According to the cobordism hypothesis \cite{Lurie:2009keu}, there is a category $\CC_\Sigma$ associated to this TQFT, which is physically the category of boundary conditions. 

    \item One can construct the state space of $\int_{\Sigma}\CT^{4d}$ by compactifying the theory further on $S^1$, and inserting various interfaces at different points of $S^1$. In particular, given a collection of objects $\{\ell_i\}_{1\leq i\leq n}$ in $\CL$, inserting them at various points $\{p_i\}\subseteq \Sigma$ one obtains such a collection of interfaces for $\int_{\Sigma}\CT^{4d}$, and therefore a state space
    \be
\CH (\ell_1, \ldots, \ell_n):= \text{ state space of } \int_{\Sigma}\CT^{4d} \text{ with insertions } \ell_i \text{ along } S^1 \text{ and at } \{p_i\}.
    \ee

    \item This space $\CH (\ell_1, \ldots, \ell_n)$ does not depend on the insertion points on $S^1$, and in fact is equal to $\CH (\ell_1* \ell_2*\cdots *\ell_n)$, where one first collides all the interfaces $\ell_i$ to form a single interface and then inserts it at any point on $S^1$. Moreover, if the category $\CC_\Sigma=\mathrm{Vect}$, then one can show that
    \be
\CH (\ell_1, \ldots, \ell_n)=\bigotimes_i \CH (\ell_i).
    \ee

\item Taking the state space $\CH (\ell_1, \ell_2)$ with two insertions, there is an $R$-matrix
\be
\CR(p_1, p_2)\in \End (\CH(\ell_1, \ell_2))=\End (\CH(\ell_1)\otimes \CH (\ell_2)), \qquad p_1\ne p_2\in \Sigma
\ee
which depends meromorphically on $(p_1,p_2)$ and satisfies the quantum Yang-Baxter equation. The transfer matrix of the integrable lattice model is obtained from this meromorphic $R$-matrix. 

\item The origin of the matrix $\CR(p_1, p_2)$ is the chiral-monoidal structure of $\CL$. More precisely, this chiral-monoidal structure should imply the existence of functorial isomorphisms
\be
(\ell_1, p_1) * (\ell_2, p_2)\cong (\ell_2, p_2) * (\ell_1, p_1), \qquad \text{ whenever } p_1\ne p_2.
\ee
The $R$-matrix $\CR(p_1, p_2)$ is the action of this natural isomorphism on the associated state spaces. The quantum Yang-Baxter equation comes from the compatibility of the above functorial isomorphisms with associativity of the monoidal structure. 
    
\end{enumerate}

This strategy is then applied to the famous 4-dimensional version of Chern-Simons theory for a simple Lie algebra $\fg$, with $\Sigma=\mathbb P^1$ and singularity at $\infty$. It was shown that in this case $\CC_\Sigma$ is indeed $\mathrm{Vect}$, and the associated integrable system is governed by the Yangian of \(\fg\), first constructed in \cite{Drinfeld:1985rx}.

The author of  \cite{kapustin2006holomorphic} observed that any 4d $\CN=2$ theory admits a holomorphic-topological twist. For a pure gauge theory with gauge group $G$, it was argued that the category of line operators $\CL$ should have the form
\be
\CL= \QCoh (\mathrm{Maps}(\mathbb B, BG))=\QCoh \lp G(\CO)\! \setminus\! \Gr_G\rp,
\ee
Here, $\mathbb B=\mathbb D\coprod_{\mathbb D^\times} \mathbb D$ is the formal bubble. Moreover, the category $\CC_\Sigma$ in this case is given by
\be
\CC_\Sigma=\QCoh (\mathrm{Maps}(\Sigma, BG))=\QCoh (\Bun (\Sigma)). 
\ee
Under this identifications, the procedure of Step 3, namely the procedure of producing interfaces (which are objects in $\End (\CC_\Sigma)$) from line operators (which are objects in $\CL$), is precisely given by Hecke modifications, namely the functor in equation \eqref{eq:CoherentHecke}. 

According to the general arguments of \cite{costello2014integrable}, whenever the category $\QCoh (\Bun (\Sigma))$ is trivial, one should obtain such an integrable lattice model. In this case, the functor $\CF_p$ from Hecke modifications simply provides a monoidal fiber functor for $\CL$, and it should be compatible with factorization structure (namely inserting at various points on $\Sigma$). This compatibility should ultimately leads to the $R$-matrix of Step 5. 

In \cite{abedin2024yangian}, based on the geometric study of $\CL$ in \cite{cautis2019cluster, niu2022local, cautis2023canonical}, we constructed a spectral $R$-matrix associated to the perturbative sector of this holomorphic-topological theory. The $R$-matrix is the universal $R$-matrix of the quantum group $Y_\hbar (\fd)$, the Yangian of $\fd=T^*\fg$ (or its twisted versions).  

However, for general $\Sigma$, the category $\CC_\Sigma=\QCoh (\Bun (\Sigma))$ is not $\mathrm{Vect}$. In this case, it was argued in \cite[Section 11]{costello2018gauge} that the associated integrable system is a dynamical system, where the dynamical parameter takes values in the moduli space of $G$-bundles. Unfortunately, when this moduli space is not smooth or is a stack, the most general definition of what this means is still mysterious (at least to us). The result of the current paper, as was the prediction of \cite[Section 11]{costello2018gauge} , is that perturbatively, as long as one is working over a neighborhood $B$ of $\Bun (\Sigma)$ where it is smooth, this integrable system is governed by a quantum groupoid, one that is twist-equivalent to the quantum group $Y_\hbar (\fd)$. In fact, over $B$, we can understand this dynamical twist explicitly, and extract solutions of dynamical quantum Yang-Baxter equations. 

It will be an interesting question to understand this integrable system beyond the perturbative sector, by studying the geometry of $\Bun (\Sigma)$ and $G(\CO)\!\setminus\!\Gr_G$. We leave this for future research. 

\subsection{Structure of the paper}

The paper is structured as follows.

\begin{itemize}

    \item In Section \ref{sec:generalities}, we recall the definition and properties of Hopf algebroids following \cite{xu2001quantum}, especially the definition of quantum twists in this setting. We also introduce Lie bialgebroids and their quantizations following \cite{liu_weinstein_xu_manin_triples}. This section serves as a tool-kit for the rest of the paper. 
    
    \item In Section \ref{sec:preliminaries}, we recall the definition of $Y_\hbar (\fd)$ and its properties that are relevant to our construction. We also recall the moduli space $\Bun (\Sigma)$, its geometry and its formal completion over regularly stable bundles. Finally, we explain, following the methods of \cite{abedin2024r}, that our setup $(\Sigma, \CP, \sigma)$ leads to Lie bialgerboids and dynamical $r$-matrices over the formal neighborhood of $\Bun (\Sigma)$ around $\CP$.  

    \item In Section \ref{sec:constructHopf}, we construct the quantization $\Upsilon_\hbar^\sigma (\fd)$ of the Lie bialgebroid from the previous section, generalizing the ideas of \cite{abedin2024yangian} to splittings of Lie algebroids. 

    \item In Section \ref{sec:dynamicalR}, we show that $\Upsilon_\hbar^\sigma (\fd)$ can be obtained from the Yangian $Y_\hbar (\fd)$ via a dynamical twist. Decomposing this twist, we construct solutions of quantum dynamical Yang-Baxter equation. 
    
\end{itemize}

\subsection{Acknowledgements}

R.A. would like to thank Meer Ashwinkumar, Giovanni Felder, and Silvan Lacroix for useful discussions related to this work. The work of R.A. was supported by the DFG grant AB 940/1--1 and as part of the NCCR SwissMAP, a National
Centre of Competence in Research, funded by the Swiss
National Science Foundation (grant number 205607).

W.N. would like to thank Kevin Costello, Tudor Dimofte, Davide Gaiotto, Faroogh Moosavian, Ben Webster and Harold Williams for discussions related to this topic. W.N. would also like to thank his friend Don Manuel for many encouragements. W.N.'s research is supported by Perimeter Institute for Theoretical Physics. Research at Perimeter Institute is supported in part by the Government of Canada through the Department of Innovation, Science and Economic Development Canada and by the Province of Ontario through the Ministry of Colleges and Universities.

\section{Generalities about Hopf algebroids}\label{sec:generalities}

In this section, we collect some preliminaries on Hopf algebroids, Lie bialgebroids, and their quantizations, with the hope of making this paper more self-contained. We in particular comment on how the quantization strategy we presented in \cite{abedin2024yangian} has a natural generalization to the case of Lie algebroids. As we have mentioned, this section serves as a tool-kit for the constructions to occur in later sections. 

Those who are familiar with Hopf algebroids and their basic properties may skip the whole section except sections \ref{subsubsec:formalgroupoid} and \ref{subsec:cotangbialg}. There we present, to our knowledge new, results on the construction of the algebra of ``functions on the formal groupoid" associated to a Lie algebroid and the construction of quantum groupoids from Lie algebroid splittings. These are important ingredients for our construction which seems to not have been done in the literature. 

We must comment that in the main application, all the Hopf algebroid structure considered are continuous with respect to the limit topologies coming from the loop algebra $\CK\coloneqq\C(\!(t)\!)$ as well as the formal neighborhood \(B = \textnormal{Spf}(\C[\![\lambda]\!])\) of the moduli space of \(G\)-bundles. Therefore, technically speaking, one must extend all the statements we present here to the topological setting. Such an extension is rather simple, and for cleanliness of statements we will omit the mentioning of topology in this section (as in Remark \ref{Rem:topology}).

\subsection{Definition and properties}

Let $A$ be an algebra. All algebras considered here are unital. We will follow the definitions given in \cite{xu2001quantum}. 

\begin{Def}\label{def:hopf_algebroid}
A Hopf algebroid $(H, s, t, m, \Delta, \epsilon)$ over $A$ consists of the following data.

\begin{enumerate}
    \item A total algebra $H$ with multiplication $m$, an algebra homomorphism $t\colon A\to H$ called \textbf{target map}, an algebra antihomomorphism $s\colon A^{\textnormal{op}}\to H$ called \textbf{source map}, such that $s(a)t(b)=t(b)s(a)$ for all $a, b\in A$. This makes $H$ into an $A$-$A$-bimodule via \(h \cdot a = s(a)h\) and \(a\cdot h = t(a)h\) for \(a \in A,h \in H\). Moreover, we denote by $H\sttensor_A H$ the tensor product which uses the right-module structure induced by \(s\) and left-module structure by \(t\), i.e.\ the tensor product over \(\C\) with the additional relation \(s(a)h_1 \otimes h_2 = h_1 \otimes t(a) h_2 \in H\sttensor_A H\) for \(h_1,h_2, a\in A\).

    \item A coproduct $\Delta\colon H\to H\sttensor_A H$, which is a map of $A$-$A$-bimodules, making $H$ a coalgebra object in the category of $A$-$A$-bimodules. In particular, it satisfies the coassociativity\be
        (1\otimes \Delta)\Delta=(\Delta \otimes 1)\Delta: H\to H\sttensor_A H\sttensor_A H.
    \ee
    and is compatible with the product in the sense that
    \be\label{eq:coprodAA}
        \Delta (h)(s(a)\otimes 1-1\otimes t(a))=0, \qquad \Delta (h_1h_2)=\Delta (h_1)\Delta (h_2).
    \ee
    holds for all \(h_1,h_2 \in H\) and \(a \in A\).
    Here, we use the right action of $H\otimes H$ on $H\sttensor_A H$ to define the multiplications above. The first equality guarantees that the second equality makes sense. Moreover, $\Delta (1)=1\otimes 1$. 

    \item A counit $\epsilon\colon H\to A$, which is an $A$-$A$-bimodule morphism such that $\epsilon (1_H)=1_A$ (implying that $\epsilon s=\epsilon t=\mathrm{Id}_A$) and
    \(
(\epsilon\otimes 1) \Delta=(1\otimes \epsilon )\Delta=\mathrm{Id}_H 
    \)
    holds.\hfill $\lozenge$
\end{enumerate}

\end{Def}

\begin{Rem}

    We warn the readers that in this paper, we used a different convention on source and target maps compared to the one used in \cite{xu2001quantum}. \hfill $\lozenge$
\end{Rem}

We now define what an anchor map is. Consider the algebra $\End (A)$, namely \(\C\)-linear endomorphisms of the algebra $A$. This is an $A$-$A$-bimodule, where $A$ acts on the left/right by left/right multiplication on the target. Namely,
\be
af a' (-)=af(-)a'\,,\qquad \forall a, a'\in A, f\in \End (A).
\ee
Suppose that we are given an $A$-$A$-bimodule map $\psi: H\to \End (A)$, which particularly makes $A$ into a module of $H$. We define two maps:
\be\label{eq:phi_st}
\varphi_t, \varphi_s: H\sttensor_A H \otimes A\to H,
\ee
by
\be\label{eq:varphits}
\varphi_t (h_1\otimes h_2 \otimes a)= t(\psi (h_2)(a))h_1, \qquad \varphi_s (h_1\otimes h_2 \otimes a)=s (\psi (h_1)(a))h_2. 
\ee
These are well-defined thanks to the fact that $\psi$ is an $A$-$A$-bimodule map. 

\begin{Def}
    An anchor map is an $A$-$A$-bimodule homomorphism $\psi: H\to \End (A)$ such that
    \begin{enumerate}
        \item $\varphi_t (\Delta h\otimes a)=ht(a)$, $\varphi_s (\Delta h\otimes a)=hs(a)$. 

        \item $\psi (h) (1_A)=\epsilon (h)$. \hfill $\lozenge$
        
    \end{enumerate}
    
\end{Def}

Note that in \cite[Proposition 3.3]{xu2001quantum}, it was shown that if the first item is true, then $\psi (h) (1_A)$ defines a counit for $H$. Therefore the second item just means that the counit comes exactly from acting on the identity via the anchor map. Given a tensor 
\begin{equation}
    h_1\otimes h_2\otimes \cdots \otimes h_n\in \underbrace{H \,\sttensor_A \dots \sttensor_A H}_{n\textnormal{-fold}}   
\end{equation}
and $a_{i_1},\ldots, a_{i_k}\in A$, the element
\be
h_1\otimes_A \cdots \otimes_A \psi (h_{i_j})(a_{i_j})\otimes_A \cdots \otimes_A h_n \in \underbrace{H \,\sttensor_A \dots \sttensor_A H}_{(n-k)\textnormal{-fold}} 
\ee
is a well-defined element, thanks to the fact that $\psi$ is a bimodule map. The following properties are consequences of the existence of an anchor map. 

\begin{Cor}
    Let \(a,b \in A\) and \(h \in H\). Then the following results are true:    
    \begin{enumerate}
        \item $\psi (t (a))(b)=ab$, $\psi (s(a))(b)=ba$. 

        \item $\psi (\Delta (h)) (a\otimes b)=\psi (h)(ab)$. 

        \item $\epsilon (hg)=\psi (h)(\epsilon (g))$. \hfill \qedsymbol
    \end{enumerate}
\end{Cor}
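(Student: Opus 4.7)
The plan is to bootstrap all three identities from a single seed computation---namely, $\psi(1_H) = \mathrm{Id}_A$---and then to exploit the bimodule property of $\psi$ together with the two anchor equations $\varphi_t(\Delta h \otimes a) = ht(a)$ and $\varphi_s(\Delta h \otimes a) = hs(a)$. First I would verify $\psi(1_H) = \mathrm{Id}_A$: applying the first anchor axiom with $h = 1_H$ and $\Delta 1_H = 1_H \otimes 1_H$ yields $t(\psi(1_H)(a)) \cdot 1_H = t(a)$, and composing with $\epsilon$ (which is a left inverse of $t$) extracts $\psi(1_H)(a) = a$. Identity (1) is then immediate: writing $t(a) = a \cdot 1_H$ and $s(a) = 1_H \cdot a$ in the $A$-bimodule structure of $H$, the bimodule property of $\psi$ into $\mathrm{End}(A)$ gives $\psi(t(a))(b) = a \cdot \psi(1_H)(b) = ab$ and $\psi(s(a))(b) = \psi(1_H)(b) \cdot a = ba$.

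For identity (2), my strategy is to apply $\psi(-)(b)$ to the first anchor equation $ht(c) = t(\psi(h_{(2)})(c))h_{(1)}$; using the bimodule property on the left, this yields $\psi(ht(c))(b) = \psi(h_{(2)})(c) \psi(h_{(1)})(b)$. The analogous manipulation of the second anchor gives $\psi(hs(c))(b) = \psi(h_{(2)})(b) \psi(h_{(1)})(c)$. Specializing $c = 1_A$ or $b = 1_A$ in these two identities and invoking $\psi(h)(1_A) = \epsilon(h)$ produces several counit-type decompositions of $\psi(h)(c)$, e.g.\ $\psi(h)(c) = \epsilon(h_{(1)})\psi(h_{(2)})(c) = \psi(h_{(1)})(c)\epsilon(h_{(2)})$ (and two more), which together with coassociativity and the multiplicativity of $\Delta$ combine to yield the chain-rule formula $\psi(h)(ab) = \psi(h_{(1)})(a) \psi(h_{(2)})(b)$.

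Identity (3) should then follow from (2) and the anchor condition $\psi(-)(1_A) = \epsilon(-)$: one writes $\epsilon(hg) = \psi(hg)(1_A)$ and, after using $\Delta(hg) = \Delta(h)\Delta(g)$ together with (2), one rewrites this as $\psi(h)(\psi(g)(1_A)) = \psi(h)(\epsilon(g))$. Alternatively, I would proceed directly by first verifying (3) on $g \in t(A) \cup s(A)$ by applying $\epsilon$ to the two anchor identities---this yields $\epsilon(ht(c)) = \psi(h_{(2)})(c)\epsilon(h_{(1)})$ and $\epsilon(hs(c)) = \epsilon(h_{(2)})\psi(h_{(1)})(c)$, both of which collapse to $\psi(h)(c)$ via the counit-type decompositions above---and then extending to general $g$ through the counit decomposition $g = t(\epsilon(g_{(1)}))g_{(2)}$.

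The main obstacle, I expect, is identity (2). Although the anchor axioms produce clean formulas for $\psi(ht(c))(b)$ and $\psi(hs(c))(b)$, converting them into the symmetric chain-rule formula $\psi(h)(ab) = \psi(h_{(1)})(a) \psi(h_{(2)})(b)$ requires careful tracking of Sweedler indices: the two tensor factors of $H\sttensor_A H$ play asymmetric roles via $s$ and $t$, and the counit-type decompositions derived above place $\epsilon(h_{(i)})$ on different sides of $\psi(h_{(j)})(c)$ in $A$. The balancing relation of $\sttensor_A$ must therefore be invoked every time one rearranges such expressions, and one must resist the temptation to appeal to commutativity of $A$ at any step.
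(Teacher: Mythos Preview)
The paper does not supply a proof of this corollary; it is stated as a consequence of the anchor axioms, with the details deferred to \cite{xu2001quantum}. So your argument stands or falls on its own.

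Your proof of (1) is correct. The computation $\psi(1_H)=\mathrm{Id}_A$ via the anchor axiom at $h=1_H$ followed by the bimodule property of $\psi$ is exactly the right approach.

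There is, however, a genuine gap in your argument for (3). You write $\epsilon(hg)=\psi(hg)(1_A)$ and then claim that ``using $\Delta(hg)=\Delta(h)\Delta(g)$ together with (2)'' rewrites this as $\psi(h)(\psi(g)(1_A))$. But that step is exactly the assertion that $\psi$ is multiplicative, i.e.\ $\psi(hg)=\psi(h)\circ\psi(g)$, and this does \emph{not} follow from (2) and the multiplicativity of $\Delta$: applying (2) to $hg$ only gives $\psi(hg)(1)=\sum\epsilon(h^{(1)}g^{(1)})\epsilon(h^{(2)}g^{(2)})$, which is circular. Your alternative route (verify on $g\in t(A)\cup s(A)$ and extend via $g=\sum t(\epsilon(g^{(1)}))g^{(2)}$) runs into the same wall, since evaluating $\epsilon$ on $h\,t(\epsilon(g^{(1)}))\,g^{(2)}$ again requires knowing $\epsilon$ on a product with a general right factor.

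The fix is to notice what the setup already grants you: the sentence ``which particularly makes $A$ into a module of $H$'' preceding the definition of the anchor means that $\psi\colon H\to\End(A)$ is an \emph{algebra} homomorphism (this is part of Xu's definition of anchor). Once you use this, (3) is a one-liner:
\[
\epsilon(hg)=\psi(hg)(1_A)=\psi(h)\bigl(\psi(g)(1_A)\bigr)=\psi(h)(\epsilon(g)).
\]
Likewise (2) simplifies considerably: using (1) and multiplicativity, $\psi(h)(ab)=\psi(h)\bigl(\psi(t(a))(b)\bigr)=\psi(ht(a))(b)$, and the anchor identity $ht(a)=\sum t(\psi(h^{(2)})(a))h^{(1)}$ together with the bimodule property of $\psi$ gives the chain rule directly. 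Your derivation of the intermediate identities $\psi(ht(c))(b)=\sum\psi(h^{(2)})(c)\,\psi(h^{(1)})(b)$ and $\psi(hs(c))(b)=\sum\psi(h^{(2)})(b)\,\psi(h^{(1)})(c)$ is correct and useful, but the subsequent ``counit-type decompositions'' you plan to juggle are unnecessary once multiplicativity of $\psi$ is on the table. Finally, your concern about avoiding commutativity of $A$ is well-intentioned but somewhat moot here: in Xu's framework and in every application in this paper, the base is commutative.
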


The definitions above ensure that the category of \(H\)-modules is monoidal; see \cite[Theorem 3.6]{xu2001quantum}.

\begin{Thm}
  If $H$ is a Hopf algebroid over $A$ equipped with an anchor map $\psi$, the category of $H$-modules is monoidal with unit $(A, \psi)$. Given two modules of $H$, say $M, N$, their monoidal product is given by $M\sttensor_A N$, where the $H$-module structure is defined by
    \be
    h\cdot (m\otimes n)=\Delta (h)(m\otimes n)\,,\qquad h \in H, m \in M, n \in N,
    \ee
where the tensor product on the RHS is over \(\C\). This is well-defined thanks to equation \eqref{eq:coprodAA}. The forgetful functor from the category of $H$-modules to the category of $A$-$A$-bimodules is an exact monoidal functor. \hfill \qedsymbol

\end{Thm}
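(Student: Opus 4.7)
The plan is to verify the axioms of a monoidal category in turn. Most of the structure will be inherited from the (well-known) monoidal category of $A$-$A$-bimodules with tensor product $\sttensor_A$, so the real content lies in checking that the proposed $H$-action on $M \sttensor_A N$ is well-defined and compatible with products, and that the unit isomorphisms can be promoted to $H$-equivariant maps.

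First I would verify that the formula $h \cdot (m \otimes n) \coloneqq \Delta(h)(m \otimes n)$ descends to $M \sttensor_A N$. Well-definedness with respect to the defining relation $s(a) m \otimes n = m \otimes t(a) n$ of $\sttensor_A$ is exactly the content of the first identity in \eqref{eq:coprodAA}. Multiplicativity of the action, $h_1 h_2 \cdot x = h_1 \cdot (h_2 \cdot x)$, is the second identity $\Delta(h_1 h_2) = \Delta(h_1)\Delta(h_2)$, and unitality follows from $\Delta(1) = 1 \otimes 1$. This step also shows that $H$-equivariant maps tensor to $H$-equivariant maps, so $\sttensor_A$ becomes a bifunctor on $H$-modules. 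The associator is inherited from the $A$-$A$-bimodule category; its $H$-equivariance reduces to comparing the actions of $(\Delta \otimes 1)\Delta(h)$ and $(1 \otimes \Delta)\Delta(h)$ on $m \otimes n \otimes p$, which agree by coassociativity.

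The main step is the unit, which is where the anchor axioms enter essentially. I would take $(A,\psi)$ as the unit object: the identities $\psi(t(a))(b) = ab$ and $\psi(s(a))(b) = ba$ from the corollary show that the induced $A$-$A$-bimodule structure on $A$ is simply left and right multiplication, so the usual bimodule unitors $\ell_M \colon A \sttensor_A M \to M$ and $r_M \colon M \sttensor_A A \to M$ already exist on the underlying bimodules. To verify that $\ell_M$ intertwines the $H$-action, one unpacks $h \cdot (a \otimes m)$ using Sweedler notation for $\Delta(h)$, applies $\ell_M$, and matches the result against $h \cdot \ell_M(a \otimes m)$; the identity that makes this work is precisely $\varphi_t(\Delta h \otimes a) = ht(a)$, and the analogous identity $\varphi_s(\Delta h \otimes a) = hs(a)$ handles $r_M$. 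I expect this to be the main technical point, since it is exactly where the otherwise unused anchor axioms come into play. The pentagon and triangle coherence identities then follow from the corresponding ones in $A$-$A$-bimodules, since all structure morphisms above are $H$-equivariant enhancements of the bimodule ones.

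Finally, for the forgetful functor $U$ to $A$-$A$-bimodules: by construction it sends $M \sttensor_A N$ to the underlying bimodule $M \sttensor_A N$ and every structure morphism to itself, so $U$ is strictly monoidal. Exactness is immediate because kernels and cokernels of $H$-linear maps are computed in $A$-$A$-bimodules with the induced $H$-action, and this construction commutes with $U$.
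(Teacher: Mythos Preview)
The paper does not supply its own proof of this theorem; it simply records the result as \cite[Theorem 3.6]{xu2001quantum} and closes with a \qedsymbol. Your outline is the standard verification and is correct: well-definedness and multiplicativity of the $H$-action on $M\sttensor_A N$ come from the two parts of \eqref{eq:coprodAA}, the associator is $H$-equivariant by coassociativity of $\Delta$, the unitors are $H$-equivariant precisely because of the two anchor-map identities, and the forgetful functor is strict monoidal and exact by construction. There is nothing substantive in the paper to compare against. One small caution: because the paper swaps the roles of source and target relative to Xu (as noted in the remark following Definition~\ref{def:hopf_algebroid}), when you actually write out the computation for the unitors you should double-check which of the two identities $\varphi_t(\Delta h\otimes a)=ht(a)$ and $\varphi_s(\Delta h\otimes a)=hs(a)$ matches which unitor under the paper's conventions; the pairing may be the reverse of what one first guesses.
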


Let us give one example of a Hopf algebroid. More examples will be given later. Let $A=R$ be a smooth ring (or its formal completion at a smooth point), let $D_R$ be the algebra of differential operators of $R$. Then $D_R$ has the structure of a Hopf algebroid. Here $s=t$ is the embedding $R\to D_R$, and the coproduct is defined by
\be
\Delta (D) (r_1, r_2)=D(r_1r_2)\,,\qquad r_1,r_2 \in R.
\ee
The anchor map is the obvious one, making $R$ into a module of $D_R$, and the counit is $\epsilon(D)=D(1)$. 

Underlying $D_R$ is a Lie algebroid over $R$, called the \textbf{tangent Lie algebroid} $T_R$. These are the linear differential operators, and one can show that $D_R$ is the universal enveloping algebra of $D_R$ over $R$. We will review the definition of this in Section \ref{subsec:Liealgebroid}.

\subsection{Twisting Hopf algebroids}\label{subsec:TwistHopf}

Just as in the case of Hopf algebras, it is important to consider twists of Hopf algebroids. In this section, we recall their definition, again following \cite{xu2001quantum}. 

Let $H$ be a Hopf algebroid over $A$ with anchor map $\psi$. Let $\CF$ be an element in $H\,\sttensor_A H$. Using the associated maps \(\varphi_s\) and \(\varphi_t\) from \eqref{eq:phi_st}, we define two maps $t_\CF, s_{\CF}\colon A\to H$ by
\be
t_\CF(a)=\varphi_t (\CF\otimes a), \qquad s_\CF (a)=\varphi_s (\CF\otimes a ). 
\ee
Moreover, define $a\cdot_\CF b$ for \(a,b\in A\) by
\be
a\cdot_\CF b=\psi(t_\CF(a))(b). 
\ee

\begin{Prop}\label{Prop:twistequations}
    Assume that $\CF$ satisfies the following conditions. 
    \begin{enumerate}
        \item Cocycle condition:
        \be
(\Delta\otimes 1)(\CF)\CF^{(12)}=(1\otimes \Delta) (\CF)\CF^{(23)}\in H\,\sttensor_A H\,\sttensor_A H.
        \ee

        \item Compatibility with counit:
        \be
(\epsilon\otimes 1)\CF=(1\otimes \epsilon) \CF=1_H.
        \ee
    \end{enumerate}
Then the following results hold true:
\begin{enumerate}
    \item $(A, \cdot_\CF)$ is an associative algebra, which we denote by $A_\CF$. 

    \item $t_\CF \colon A_\CF \to H$ and $s_\CF \colon A_\CF^{\textnormal{op}} \to H$ define new commuting algebra homomorphisms. 

    \item $\CF$ satisfies \(
    \CF (s_\CF (a)\otimes 1-1\otimes t_\CF (a))=0\) for all \(a \in A\). 
   Consequently, $\CF$ defines a map $\CF\colon H\,\sttensor_{A_\CF} H\longrightarrow H\sttensor_A H$ by
    \be\label{eq:CFmap}
\CF (h_1\otimes h_2)\coloneqq\CF\cdot (h_1\otimes h_2)\,,\qquad \forall h_1,h_2 \in H,
    \ee
where on the RHS the tensor product is over \(\C\). \hfill \qedsymbol
\end{enumerate}

\end{Prop}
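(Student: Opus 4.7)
The plan is to verify the three assertions in order, adapting the standard arguments for Hopf-algebra twists (as in \cite{xu2001quantum}) to the present conventions for the source and target maps. All computations proceed by a Sweedler-style expansion $\CF = \sum \alpha \otimes \beta$, combined with the anchor-compatibility axioms and the two hypotheses on $\CF$.

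First I would unpack the definitions into the explicit formula $a \cdot_\CF b = \psi(t_\CF(a))(b) = \sum \psi(\beta)(a)\psi(\alpha)(b)$, using that $\psi$ is a $\C$-algebra homomorphism into $\End_\C(A)$ together with $\psi(t(c))(d)=cd$. For associativity, iterating this formula expresses $(a\cdot_\CF b)\cdot_\CF c$ and $a\cdot_\CF(b\cdot_\CF c)$ as nested sums of $\psi$-actions on $(a,b,c)$. The key move is to distribute $\psi(h)$ across a product in $A$ using $\psi(h)(xy) = \sum\psi(h^{(1)})(x)\psi(h^{(2)})(y)$ (item 2 of the corollary after the anchor definition). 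This rewrite exhibits both sides as the images of $(\Delta \otimes 1)(\CF)\CF^{(12)}$ and $(1 \otimes \Delta)(\CF)\CF^{(23)}$, respectively, under the natural action of $H\sttensor_A H \sttensor_A H$ on $A^{\otimes 3}$ followed by multiplication in $A$, so the cocycle condition yields associativity. Unitality $a\cdot_\CF 1 = a = 1\cdot_\CF a$ then follows from $\psi(h)(1)=\epsilon(h)$ combined with the counit conditions $(\epsilon \otimes 1)(\CF) = (1 \otimes \epsilon)(\CF) = 1_H$.

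For the second claim, I would substitute the explicit formula for $a\cdot_\CF b$ into the definition of $t_\CF$ and apply the anchor-coproduct compatibility $\varphi_t(\Delta h\otimes a)=ht(a)$ together with the cocycle condition to rearrange the result into $t_\CF(a)t_\CF(b)$; the analogous computation for $s_\CF$ uses $\varphi_s(\Delta h\otimes a) = hs(a)$. The commutation $t_\CF(a)s_\CF(b) = s_\CF(b)t_\CF(a)$ then reduces to the original commutation $s(x)t(y) = t(y)s(x)$ in $H$ expanded in Sweedler form.

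The third claim is the most delicate and carries the main technical obstacle. Expanding $\CF(s_\CF(a)\otimes 1 - 1\otimes t_\CF(a))$ in Sweedler form produces two sums in $H\sttensor_A H$, each involving a product of two copies of $\CF$ with either $s$ or $t$ of a $\psi$-action inserted into a tensor slot. Using the anchor axioms $\varphi_s(\Delta h\otimes a)=hs(a)$, $\varphi_t(\Delta h\otimes a)=ht(a)$, and the original balancing identity $\Delta(h)(s(a)\otimes 1-1\otimes t(a))=0$ from \eqref{eq:coprodAA}, both sums collapse to the same expression, yielding the vanishing. Consequently, $\CF\cdot(-)$ descends to a well-defined map $H\sttensor_{A_\CF} H \to H\sttensor_A H$, since the new balancing relation $s_\CF(a)h_1\otimes h_2 = h_1 \otimes t_\CF(a)h_2$ in the domain is exactly what is annihilated by left multiplication by $\CF$. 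The main bookkeeping challenge throughout is the built-in swap between tensor factors of $\CF$ and arguments in $\cdot_\CF$ (the $\beta$-factor acts on the left argument and $\alpha$ on the right), requiring careful attention to index conventions to extract the desired identities from the cocycle.
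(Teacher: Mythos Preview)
The paper does not supply its own proof of this proposition: it is stated with a terminal \qedsymbol\ and attributed to \cite{xu2001quantum}, whose Section~4 (in particular Propositions~4.1--4.3 and Corollary~4.4 there) contains exactly the argument you outline. Your sketch is the standard one and is correct in substance.

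One small point worth tightening: you invoke that ``$\psi$ is a $\C$-algebra homomorphism into $\End_\C(A)$'' to unpack $\psi(t_\CF(a))(b)$. The paper's axioms only state that $\psi$ is an $A$-$A$-bimodule map satisfying the two anchor compatibilities; multiplicativity of $\psi$ is a \emph{consequence} (this is \cite[Proposition~3.4]{xu2001quantum}) rather than an assumption. For the first step you do not actually need it: the bimodule property alone gives $\psi(t(c)\alpha)(b) = c\,\psi(\alpha)(b)$, which already yields your formula $a\cdot_\CF b = \sum \psi(\beta)(a)\,\psi(\alpha)(b)$. You do implicitly use multiplicativity later (e.g.\ when showing $t_\CF$ is a ring map), so it would be cleanest to either cite \cite[Proposition~3.4]{xu2001quantum} up front or insert its short derivation from the anchor axiom $\varphi_t(\Delta h\otimes a)=ht(a)$.
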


The map defined in equation \eqref{eq:CFmap} above intertwines the actions of $A_\CF$ on both sides, where the action of $A_\CF$ on the right is induced by restricting the action of $H$ on $H\,\sttensor_A H$ to $A_\CF$. Let us now assume that $\CF$ is invertible as a map, whose inverse is given by $\CF^{-1}$. Define $\Delta_\CF\colon H\to H\,\sttensor_{A_\CF} H$ by
\be
\Delta_\CF(h)\coloneqq\CF^{-1}(\Delta (h)\CF)\in H\,\sttensor_{A_\CF} H
\ee
for all \(h \in H\).
It turns out that $\Delta_\CF$ is a coproduct for $H$ as a bimodule of $A_\CF$, and satisfies the requirements of a Hopf algebroid. Such an element $\CF$ is called a \textbf{twist}. This notion of twist is the generalization of the twist construction for Hopf algebras. From such a twist element one can construct a new Hopf algebroid structure on $H$ from the original one, and moreover these two Hopf algebroids gives equivalent monoidal categories. We summarize this into the following theorem (see \cite[Theorem 4.14]{xu2001quantum}).

\begin{Thm}
    Assume that $(H, s,t, m, \Delta, \epsilon)$ is a Hopf algebroid over $A$ with anchor map $\psi$, and let $\CF$ be a twist, i.e.\ it satisfies the conditions of Proposition \ref{Prop:twistequations} and is invertible. Then $(H, s_\CF, t_\CF, m, \Delta_\CF, \epsilon)$ is a Hopf algebroid over $A_\CF$ with anchor $\psi$. Moreover, there is an equivalence of monoidal categories:
    \be
(H, \Delta)\Mod\simeq (H,\Delta_\CF)\Mod
    \ee
    induced by $\CF$, which intertwines the underlying bimodule structures of $A_\CF$. We say that $A$ and $A_\CF$ are twist-equivalent. \hfill \qedsymbol
    
\end{Thm}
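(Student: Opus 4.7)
The plan is to establish the theorem in two stages. First, I would verify that the twisted data $(H, s_\CF, t_\CF, m, \Delta_\CF, \epsilon)$ satisfies all Hopf algebroid axioms over $A_\CF$ with anchor $\psi$. Second, I would produce the monoidal equivalence by equipping the identity-on-underlying-spaces functor between the two module categories with new monoidal coherence data built from $\CF$.

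For the Hopf algebroid axioms, Proposition \ref{Prop:twistequations} already supplies the associative algebra $A_\CF$, the source and target maps $s_\CF, t_\CF$ with commuting images, and crucially the identity $\CF(s_\CF(a) \otimes 1 - 1 \otimes t_\CF(a)) = 0$, which is precisely what ensures that $\Delta_\CF(h) := \CF^{-1} \Delta(h) \CF$ is a well-defined element of $H \sttensor_{A_\CF} H$. Coassociativity $(1 \otimes \Delta_\CF)\Delta_\CF = (\Delta_\CF \otimes 1)\Delta_\CF$ will follow by expanding both sides in terms of $\Delta$ and $\CF$ and then invoking the cocycle identity $(\Delta \otimes 1)(\CF)\CF^{(12)} = (1 \otimes \Delta)(\CF)\CF^{(23)}$ to equate the two. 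Multiplicativity $\Delta_\CF(h_1 h_2) = \Delta_\CF(h_1)\Delta_\CF(h_2)$ reduces to multiplicativity of $\Delta$ combined with the fact that conjugation by $\CF$ preserves products. The counit axiom follows directly from $(\epsilon \otimes 1)\CF = (1 \otimes \epsilon)\CF = 1_H$, and a direct check using $A$-bilinearity of $\psi$ together with the action-on-counit formula confirms that $\psi$ remains an anchor map for the twisted coproduct.

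For the monoidal equivalence, consider the functor $T \colon (H, \Delta)\Mod \to (H, \Delta_\CF)\Mod$ that is the identity on underlying objects, well-defined because $H$ itself is unchanged and the $A$- and $A_\CF$-bimodule structures on any module agree as $\C$-vector spaces. For objects $M, N$, define a coherence natural isomorphism
\begin{equation}
    \CF_{M,N} \colon M \sttensor_{A_\CF} N \longrightarrow M \sttensor_A N, \qquad m \otimes n \longmapsto \CF \cdot (m \otimes n),
\end{equation}
which is well-defined by the last item of Proposition \ref{Prop:twistequations}. Compatibility of this isomorphism with the $H$-actions on each side is tautological from the definition of $\Delta_\CF$. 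The pentagon axiom for $\CF_{M,N}$ will translate directly into the cocycle condition on $\CF$ after unwinding the associators of $\sttensor_A$ and $\sttensor_{A_\CF}$, and the unit constraints will reduce to the counit compatibility of $\CF$, noting that $A$ and $A_\CF$ agree as underlying $H$-modules via $\psi$.

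The main obstacle will be careful bookkeeping of the various bimodule structures over $A$ versus $A_\CF$ when manipulating elements of $H \sttensor H$ and its iterated tensor powers. Specifically, one must verify that each algebraic expression is well-defined in the appropriate balanced tensor product \emph{before} rearranging it. Showing that $\Delta_\CF(h)$ genuinely lands in the $A_\CF$-balanced tensor product requires combining all three conclusions of Proposition \ref{Prop:twistequations} at once, and the cocycle identity itself must be interpreted consistently in $H \sttensor_A H \sttensor_A H$ throughout. Once this bookkeeping is in place, every remaining axiom reduces cleanly to a property of $\CF$ already recorded in that proposition.
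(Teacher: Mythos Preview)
The paper does not actually prove this theorem: the statement is preceded by ``We summarize this into the following theorem (see \cite[Theorem 4.14]{xu2001quantum})'' and carries a terminal \qedsymbol\ in lieu of any argument. Your outline is the standard route one finds in Xu's paper --- verify the twisted axioms using the cocycle and counit conditions on $\CF$, then build the monoidal equivalence from the action of $\CF$ on tensor products --- so there is nothing to compare against here beyond noting that your plan matches the cited source.
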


In the proof of Section \ref{sec:constructHopf}, we will sometimes use a twist that is actually an element in $H_1\sttensor_A H_2$ for two different Hopf algebroids with some common subalgebra. In the end, as long as the twist and its coproduct makes sense in the common subalgebra, the statements above remain true. 

\subsection{Lie algebroid and formal groupoid}\label{subsec:Liealgebroid}

We review something about Lie algebroid and the structures of its universal enveloping algebra, following \cite{moerdijk2010universal}. We will also define the formal ring of functions on the formal groupoid associated to such a Lie algebroid. In this section, we will always assume that we have a commutative ring $R$. 

\subsubsection{Lie algebroids and their universal envelopes}\label{subsubsec:LieU}

\begin{Def}
    A Lie algebroid $\mf G$ is a left $R$-module, together with the structure of a Lie algebra with Lie bracket $[,]$ over \(\C\), and a homomorphism of Lie algebras $\psi\colon \mf G\to \mathrm{Der} (R)$ into the Lie algebra of derivations of $R$, which is also a homomorphism of left $R$-modules, such that
    \be
[x, ry]=r[x, y]+\psi (x)(r) y
    \ee
    for all \(x,y\in \mf G\) and \(r \in R\). \hfill $\lozenge$
\end{Def}

\begin{Rem}
    Lie algberoids as defined above are actually global sections of Lie algebroids in the differential geometric language and are also sometimes called Lie-Rinehart algebras.\hfill $\lozenge$
\end{Rem}

The set of all Lie algebroids over $R$ forms a category, and this category has the final object $T_R = \mathrm{Der}(R)$. In the case when $R$ is a smooth ring of finite Krull dimension, the module $\mathrm{Der}(R)$ is a finitely-generated free module over $R$, whose rank is equal to the dimension of $R$. This module is the tangent Lie algebroid $T_R$ of $R$. 

Just as the case of a Lie algebra, one can associate an associative object to a Lie algebroid. This object turns out to be a Hopf algebroid over $R$. We start with a formal definition. 

\begin{Def}
    The universal enveloping algebra of a Lie algebroid $\mf G$ over $R$, denoted by $U_R (\mf G)$, is the unique algebra characterized by the following universal property:
    \begin{enumerate}
        \item There is an algebra homomorphism $s\colon R\to U_R (\mf G)$ and a Lie algebra homomorphism $\rho\colon \mf G\to U_R (\mf G)$ such that
        \be\label{eq:XrU}
            \rho (rx)=s(r)\rho (x),\qquad [\rho (x), s(r)]=s(\psi(x)(r)), \qquad \forall x\in \mf G, r\in R; 
        \ee
        \item For any other algebra $A$ equipped with an algebra homomorphism $s_A \colon R\to A$ and a Lie algebra homomorphism $\rho_A \colon \mf G \to A$ satisfying equation \eqref{eq:XrU}, there is a unique algebra homomorphism $f\colon U_R (\mf G)\to A$ such that $f\circ \rho=\rho_A$ and $f\circ s=s_A$.\hfill $\lozenge$
    \end{enumerate}  
\end{Def}

Let us now sketch the construction of $U_R (\mf G)$. The space $R\oplus \mf G$ has the structure of a Lie algebra with bracket
\be
[(r, x), (s, y)]=(\psi (x)(s)-\psi (y)(r), [x, y]).
\ee
Let $U(R\oplus \mf G)$ be its universal enveloping algebra (over $\C$). There is an embedding of Lie algebras $i\colon R\oplus \mf G\to U (R\oplus \mf G)$, and let $\overline{U} (R\oplus \mf G)$ be the subalgebra generated by $i (R\oplus \mf G)$. We can define $U_R (\mf G)$ to be the quotient of $\overline{U}(R\oplus \mf G)$ by the two-sided ideal generated by elements of the form
\be
i(s, 0) i(r, x)-i (sr, sx), \qquad \forall s, r\in R, x\in \mf G.
\ee
Both the universal definition and the actual construction have advantages in different situations. For example, the universal property implies that $U_R (\mf G)$ comes equipped with an algebra homomorphism (also a morphism of $R$-modules) $\psi\colon U_R (\mf G)\to \End (R)$. It is also most convenient to use the universal property to construct maps between universal enveloping algebras of Lie algebroids. On the other hand, the explicit construction reveals many properties of $U_R (\mf G)$. For example, we have the following result. 

\begin{Lem}[PBW theorem]\label{Lem:PBW}
    Let $\mf G$ be a Lie algebroid that is a free $R$-module, then $U_R (\mf G)$ has a natural filtration with an identification of $R$-algebras:
    \be
\mathrm{Gr} \lp U_R(\mf G)\rp \cong S_R (\mf G).
    \ee
    Here, the right hand side is the symmetric algebra over $R$ generated by $\mf G$.\hfill \qedsymbol 
\end{Lem}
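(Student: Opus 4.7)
The plan is to carry out the standard Poincaré--Birkhoff--Witt argument, adapted to the Lie algebroid setting as originally done by Rinehart. First I would equip $U_R(\mf G)$ with the filtration where $F^n U_R(\mf G)$ is the $R$-submodule generated by products $\rho(x_1) \cdots \rho(x_k)$ with $k \le n$, and $F^0 = s(R)$. The defining commutation relations $[\rho(x),\rho(y)] = \rho([x,y])$ and $[\rho(x), s(r)] = s(\psi(x)(r))$ imply that $\{F^n\}$ is a multiplicative filtration whose associated graded algebra is commutative. Since $\rho(rx) = s(r)\rho(x)$, the composition $\mf G \to F^1 U_R(\mf G) \to \mathrm{Gr}^1(U_R(\mf G))$ is $R$-linear, and the universal property of the symmetric algebra extends it to a surjective homomorphism of graded $R$-algebras
\[
\sigma \colon S_R(\mf G) \longrightarrow \mathrm{Gr}(U_R(\mf G)).
\]

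The remaining task is to prove that $\sigma$ is injective. I would fix an $R$-basis $\{e_i\}_{i \in I}$ of $\mf G$ (available by the freeness hypothesis) together with a total order on $I$, and consider $M = S_R(\mf G)$ with its standard $R$-basis of ordered monomials $e_{i_1} \cdots e_{i_n}$ with $i_1 \le \cdots \le i_n$. The key step is to endow $M$ with a $U_R(\mf G)$-module structure for which the cyclic action of an ordered monomial on $1 \in M$ recovers itself. Concretely, I would define the action of $\mf G$ on $M$ inductively on degree: declare $e_i \cdot (e_{i_1} \cdots e_{i_n}) = e_i e_{i_1} \cdots e_{i_n}$ whenever $i \le i_1$, and otherwise reorder using $[e_i, e_{i_1}] \in \mf G$ together with the anchor $\psi(e_i)$ acting on the $R$-coefficients, reducing to lower-degree monomials. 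Together with the tautological $R$-module structure this should, by the universal property of $U_R(\mf G)$, extend to an algebra homomorphism $U_R(\mf G) \to \End_{\C}(M)$.

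The main obstacle is verifying that the inductive action just described is consistent and that it satisfies the defining relations of $U_R(\mf G)$. This amounts to a careful combinatorial verification that the Jacobi identity for $[-,-]$, the Leibniz rule for $\psi$, and the compatibility $[\psi(x), \psi(y)] = \psi([x,y])$ are all respected by the reordering procedure, which is the standard technical heart of any PBW-type result. Once consistency is established, evaluating the action on $1 \in M$ produces an $R$-linear map $\mathrm{ev}_1 \colon U_R(\mf G) \to M$ that sends each ordered monomial $\rho(e_{i_1}) \cdots \rho(e_{i_n})$ (with $i_1 \le \cdots \le i_n$) to the corresponding $e_{i_1} \cdots e_{i_n} \in M$. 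Since every element of $F^n U_R(\mf G)$ can be rewritten, via the commutation relations, as an $R$-linear combination of ordered monomials of degree at most $n$, the linear independence of the targets in $M$ forces the ordered monomials to be $R$-linearly independent in $\mathrm{Gr}^n(U_R(\mf G))$; thus $\sigma$ is a bijection, yielding the desired identification of graded $R$-algebras.
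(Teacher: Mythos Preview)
Your proposal is correct and follows the standard Rinehart argument for the PBW theorem for Lie--Rinehart algebras. Note, however, that the paper does not actually supply a proof of this lemma: the \qedsymbol\ at the end of the statement signals that the authors are invoking it as a known result from the literature (the section is framed as following \cite{moerdijk2010universal}), so there is no proof in the paper to compare against. Your write-up is thus more detailed than what the paper provides, and is exactly the argument one would give if asked to justify the cited result.
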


This is convenient in proving isomorphisms between many algebras. For instance, one can show that $U_R (T_R)\cong D_R$, where $D_R$ is the algebra of differential operators on $R$, when $R$ is a smooth ring. From now on, we always assume that $\mf G$ is free over $R$. 

It turns out that $U_R (\mf G)$ has the structure of a Hopf algebroid over $R$. Let $U_R(\mf G)\otimes_R U_R(\mf G)$ be the tensor product, where we use the action of $R$ on $U_R (\mf G)$ by left multiplication. Define the subset $U_R(\mf G)\overline{\otimes}_R U_R(\mf G)$ to be the kernel of the map 
\begin{equation}
    \theta \colon U_R(\mf G)\otimes_R U_R(\mf G)\longrightarrow \Hom (R, U_R(\mf G)\otimes_R U_R(\mf G))    
\end{equation}
given by
\be\label{eq:thetaabr}
\theta (x\otimes_R y) (r)=xr\otimes y-x\otimes yr,\qquad x,y\in U_R (\mf G), r\in R.
\ee
This is well-defined since the tensor uses the left multiplication of $R$ on $U_R (\mf G)$. It turns out that $U_R(\mf G)\overline{\otimes}_R U_R(\mf G))$ is an algebra, whose multiplication is the evident multiplication on the two tensor copies. Define $\Delta\colon U_R (\mf G)\to U_R(\mf G)\overline{\otimes}_R U_R(\mf G)$ by
\be
\Delta (x)=x\otimes 1+1\otimes x,\qquad \Delta (r)=r\otimes 1=1\otimes r,\qquad \forall x\in \mf G, r\in R. 
\ee
Using the universal property, one can show that this gives rise to an algebra homomorphism (in fact an embedding, thanks to PBW theorem) from $U_R (\mf G)$ to $U_R(\mf G)\overline{\otimes}_R U_R(\mf G)$. Note that the condition satisfied in equation \eqref{eq:thetaabr} by $\Delta$ is precisely the condition in equation \eqref{eq:coprodAA} in the definition of a Hopf algebroid. Moreover, one can define $\epsilon\colon U_R (\mf G)\to R$ by
\be
\epsilon (a)=\psi (a)(1).
\ee
Finally, let us define source and target map to be both equal to the algebra embedding of $s\colon R\to U_R (\mf G)$. 

\begin{Prop}
    $(U_R(\mf G),s, t = s, m, \Delta, \epsilon)$ defines a cocommutative Hopf algebroid over $R$ with anchor map $\psi$. Moreover, $\mf G$ is precisely the subset of primitive elements. \hfill \qedsymbol
    
\end{Prop}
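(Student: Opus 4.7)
The plan is to verify each axiom of Definition \ref{def:hopf_algebroid} together with cocommutativity and the anchor conditions by reducing every identity to a check on the generators $R \cup \mf G$ of $U_R(\mf G)$, and then to extract the characterization of primitive elements from the PBW theorem (Lemma \ref{Lem:PBW}). The key tool throughout is the universal property: since all the structure maps are algebra homomorphisms out of $U_R(\mf G)$, agreement of two such maps on the generating set $R \cup \mf G$ forces agreement on the entire algebra.

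I would first construct $\Delta$ using the universal property. One checks directly that $U_R(\mf G)\overline{\otimes}_R U_R(\mf G)$, the kernel of the map $\theta$ in \eqref{eq:thetaabr}, is closed under the component-wise multiplication and hence forms an associative algebra. Inside it, the assignments $r \mapsto s(r)\otimes 1 = 1 \otimes s(r)$ (the second equality holding in $\overline{\otimes}_R$ by construction) and $x \mapsto x\otimes 1 + 1\otimes x$ satisfy the Lie algebroid relations \eqref{eq:XrU}, so the universal property yields a unique algebra homomorphism $\Delta\colon U_R(\mf G) \to U_R(\mf G)\overline{\otimes}_R U_R(\mf G)$. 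Similarly, the anchor $\psi\colon U_R(\mf G) \to \End(R)$ comes from the universal property applied to the given action of $\mf G$ by derivations, and $\epsilon \coloneqq \psi(-)(1)$ is its composition with evaluation at $1 \in R$.

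Next, I would verify the remaining axioms --- compatibility of $\Delta$ with multiplication as in \eqref{eq:coprodAA}, coassociativity, the counit identities, both anchor conditions, and cocommutativity $\tau \Delta = \Delta$ --- by direct checks on $r \in R$ and $x \in \mf G$. For instance, $(\Delta \otimes 1)\Delta(x) = (1 \otimes \Delta)\Delta(x) = x\otimes 1\otimes 1 + 1\otimes x\otimes 1 + 1\otimes 1\otimes x$, while both sides evaluated on $r \in R$ equal $s(r)\otimes 1\otimes 1$; this establishes coassociativity since both composites are algebra maps. Cocommutativity is immediate from the symmetric defining formulas, and the anchor conditions reduce on $\mf G$ to the Leibniz rule $\psi(x)(ab) = \psi(x)(a)b + a\psi(x)(b)$, which holds because $\psi(x)$ is a derivation.

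The main non-formal step is the identification of $\mf G$ with the primitive elements. I would pass through the PBW filtration $F_\bullet U_R(\mf G)$ and the isomorphism $\mathrm{Gr}(U_R(\mf G)) \cong S_R(\mf G)$ of Lemma \ref{Lem:PBW}. Since $\Delta$ preserves the filtration (it does so on generators), a primitive element $h$ of filtration degree $n$ has image $\bar h$ that is primitive for the induced coproduct on $S_R(\mf G)$. Writing $S_R(\mf G) \cong R[x_1, \dots, x_k]$ with $\Delta(x_i) = x_i \otimes 1 + 1 \otimes x_i$, the condition $\Delta(P) = P\otimes 1 + 1\otimes P$ for a polynomial $P$ expands as $P(x\otimes 1 + 1\otimes x) = P(x)\otimes 1 + 1\otimes P(x)$, which forces $P$ to be $R$-linear of homogeneous degree one, so $n \leq 1$. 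An inspection of $F_1 U_R(\mf G) = s(R) \oplus \rho(\mf G)$, together with the observation that no nonzero element $s(r)$ is primitive (since $\Delta(s(r)) = s(r)\otimes 1 \neq s(r)\otimes 1 + 1\otimes s(r)$ for $r \neq 0$), forces $h \in \rho(\mf G) \cong \mf G$, completing the proof.
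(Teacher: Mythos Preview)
The paper does not actually prove this proposition: it is stated with a terminal \qedsymbol\ as a known result (the surrounding discussion cites \cite{moerdijk2010universal} and \cite{xu2001quantum}), so there is no proof in the paper to compare against. Your argument is correct and is the standard one.

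A couple of small remarks on presentation. First, in your final paragraph you write $\Delta(s(r)) = s(r)\otimes 1 \neq s(r)\otimes 1 + 1\otimes s(r)$; since $s=t$ and the tensor is over $R$ via left multiplication, one has $1\otimes s(r) = s(r)\otimes 1$, so the inequality really reads $s(r)\otimes 1 \neq 2\,s(r)\otimes 1$, which indeed forces $r=0$ over $\C$. It would be clearer to say this explicitly. Second, writing $S_R(\mf G) \cong R[x_1,\dots,x_k]$ tacitly assumes finite rank; the paper only assumes $\mf G$ is free (and in the applications it is a completed infinite-rank module), so you should phrase the primitive-element computation for an arbitrary $R$-basis $\{x_i\}_{i\in I}$. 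The argument is unchanged: for a monomial $x_{i_1}\cdots x_{i_n}$ with $n\ge 2$, the binomial expansion of $\Delta$ produces cross-terms that cannot cancel, so any primitive must lie in degree $\le 1$.
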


If we extend the universal envelope by a formal parameter, i.e.\ if we consider $U_R(\mf G)\lbb\hbar\rbb$, this ring admits group-like elements. More precisely, we have the following useful lemma, whose proof goes along the same lines as \cite[Lemma 3.3.1.]{abedin2024yangian}.

\begin{Lem}\label{Lem:groupoid_like_elements}
        We have
        \(\Delta(a) = a \otimes a \in U(\mf G)[\![\hbar]\!] \,\sttensor_{R[\![\hbar]\!]} U(\mf G)[\![\hbar]\!]\) if and only if \(a = e^x\) for some \(x \in \hbar\mf G[\![\hbar]\!]\).\hfill \qedsymbol
\end{Lem}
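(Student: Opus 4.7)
The plan is to adapt the standard argument for $U(\fg)[\![\hbar]\!]$ from \cite[Lemma 3.3.1]{abedin2024yangian} to the Hopf algebroid setting. For the forward direction, given $x \in \hbar \mf G[\![\hbar]\!]$, I will invoke that elements of $\mf G$ are primitive (as recalled in the proposition preceding the lemma) to write $\Delta(x) = x \otimes 1 + 1 \otimes x$. Since $x \otimes 1$ and $1 \otimes x$ commute in $U_R(\mf G)[\![\hbar]\!] \sttensor_{R[\![\hbar]\!]} U_R(\mf G)[\![\hbar]\!]$ with its componentwise multiplication, and $\Delta$ is a continuous algebra homomorphism in the $\hbar$-adic topology, one then computes $\Delta(e^x) = e^{\Delta(x)} = (e^x \otimes 1)(1 \otimes e^x) = e^x \otimes e^x$.

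For the backward direction, writing $a = \sum_{n \geq 0} \hbar^n a_n$, the key technical step is to establish $a_0 = 1$. My plan is to pass to the associated graded via the PBW theorem (Lemma 2.3.2): the identity $\Delta(a_0) = a_0 \otimes a_0$ descends to the same identity in the polynomial ring $S_R(\mf G)$ equipped with the standard coproduct under which $\mf G$ is primitive. A short polynomial computation (setting $y' = 0$ in $f(y+y') = f(y) f(y')$ and iteratively differentiating in $y'$ at the origin) shows that any group-like polynomial in primitive generators must sit in degree zero, so $a_0 \in R$. Collapsing the identity in $R \sttensor_R R \cong R$ (trivial here since $s = t$ on $U_R(\mf G)$) then yields the idempotency relation $a_0 = a_0^2$, and the case $a_0 = 0$ is excluded by $\epsilon(a_0) = 1$, which follows from the counit axiom $(\epsilon \otimes 1)\Delta(a) = a$ applied to $a$; hence $a_0 = 1$.

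With $a \equiv 1 \pmod \hbar$ in hand, the formal logarithm $x \coloneqq \log(a) \in \hbar U_R(\mf G)[\![\hbar]\!]$ is well-defined by its usual power series. Factoring $\Delta(a) = a \otimes a = (a \otimes 1)(1 \otimes a)$ with the two factors commuting, the logarithm distributes additively to give $\Delta(x) = x \otimes 1 + 1 \otimes x$, so $x$ is primitive. Since the primitives of $U_R(\mf G)$ are exactly $\mf G$, one concludes $x \in \hbar \mf G[\![\hbar]\!]$ and $a = e^x$, as claimed. The main obstacle will be the classical classification step for $a_0$: the tensor $\sttensor_R$ encodes a source/target bimodule structure, so care is needed when descending to $S_R(\mf G)$ and when invoking identities like $s(r) \otimes 1 = 1 \otimes t(r)$; in the present case $s = t$ on $U_R(\mf G)$, which considerably simplifies the bookkeeping compared with the general Hopf algebroid framework.
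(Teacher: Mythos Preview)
Your approach is correct and is precisely what the paper intends: it gives no proof and simply cites \cite[Lemma 3.3.1]{abedin2024yangian}, whose argument you have faithfully transported to the algebroid setting, using the PBW theorem and the identification of primitives in $U_R(\mf G)$ with $\mf G$ already recorded in the preceding proposition.

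One small slip: the counit axiom applied to $\Delta(a)=a\otimes a$ yields only $\epsilon(a_0)\,a_0=a_0$, not $\epsilon(a_0)=1$, so it does not exclude $a_0=0$. The clean fix is to observe that if $a=\hbar^k a_k+\CO(\hbar^{k+1})$ with $k\ge 1$, then comparing $\hbar^k$-terms in $\Delta(a)=a\otimes a$ gives $\Delta(a_k)=0$, hence $a_k=0$ by injectivity of $\Delta$; so either $a=0$ (tacitly excluded) or $a_0\neq 0$, and then your idempotent argument gives $a_0=1$.
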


Splittings of Lie algebroids will be very important in this work. We conclude this section by examining the structures induced on the level of universal enveloping algebras from such splittings. Let $\mf G$ be a Lie algebroid, and let $\mf H$ and $\mf K$ be Lie subalgebroids, such that $\mf G=\mf H\oplus \mf K$ as $R$-module. From the universal property, we have obvious algebra embeddings $U_R (\mf K), U_R (\mf H)\to U_R (\mf G)$. Moreover, multiplication gives a map
\be
U_R (\mf H)\otimes U_R (\mf K)\to U_R (\mf G),
\ee
which factors through the tensor product over $R$:
    \be\label{eq:multirl}
m\colon U_R (\mf H)\rltensor_R U_R (\mf K)\longrightarrow  U_R(\mf G).
    \ee
Here, we use $\rltensor_R$ to mean the tensor where we use right multiplication on the first factor and left multiplication on the second (note that this is different from $\sttensor$, since in this case $s=t$). By PBW theorem \ref{Lem:PBW}, this is an isomorphism of $R$-modules. Moreover, it is easy to see that this is an isomorphism of:
\begin{enumerate}
    \item left $U_R (\mf H)$-modules;

    \item right $U_R (\mf K)$-modules;

    \item cocommutative \(R\)-coalgebras. 
    
\end{enumerate}

Given such a decomposition, we obtain a left action of $U_R (\mf K)$ on $U_R (\mf H)$, denoted by $\rhd$, via
\be
U_R (\mf G)\otimes_{U_R (\mf K)} R\cong U_R (\mf H),
\ee
and similarly a right action of $U_R (\mf H)$ on $U_R (\mf G)$, denoted by $\lhd$, via
\be
R\otimes_{U_R (\mf H)} U_R (\mf G)\cong U_R (\mf K).
\ee
The action maps are all coalgebra homomorphisms. The following is a simple generalization of a similar statement for Lie algebras from \cite{majid1990physics}, and we omit the proof here. 

\begin{Prop}\label{prop:matchedULie}
For \(h,g \in U_R(\mf K)\) and \(a,b\in U_R(\mf H)\), the following results are true.
    \be\label{eq:crossedUg}
\begin{aligned}
    &(hg)\lhd a=(h\lhd (g_{(1)}\rhd a_{(1)}))\cdot (g_{(2)}\lhd a_{(2)}), 1\lhd a=\epsilon(a)\\
    & h\rhd (ab)=(h_{(1)}\rhd a_{(1)})\cdot ((h_{(2)}\lhd a_{(2)})\rhd b), h\rhd 1=\epsilon (h)\\
    & h_{(1)}\lhd a_{(1)}\otimes h_{(2)}\rhd a_{(2)}=h_{(2)}\lhd a_{(2)}\otimes h_{(1)}\rhd a_{(1)}.
\end{aligned}
\ee
Moreover, the multiplication on the algebra $U_R (\mf G)$ can be defined by
\be\label{eq:bicrossproduct}
(a\otimes h)(b\otimes g)=a(h^{(1)}\rhd b^{(1)})\otimes (h^{(2)}\lhd b^{(2)})g.
\ee
\hfill\qedsymbol
\end{Prop}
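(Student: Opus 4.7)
The plan is to base the entire proof on the PBW isomorphism $m: U_R(\mf H) \rltensor_R U_R(\mf K) \xrightarrow{\sim} U_R(\mf G)$ from equation \eqref{eq:multirl}, exploiting that $m$ is simultaneously an isomorphism of left $U_R(\mf H)$-modules, right $U_R(\mf K)$-modules, and $R$-coalgebras. Define projections $\pi_H \coloneqq (\mathrm{id} \otimes \epsilon) \circ m^{-1} \colon U_R(\mf G) \to U_R(\mf H)$ and $\pi_K \coloneqq (\epsilon \otimes \mathrm{id}) \circ m^{-1} \colon U_R(\mf G) \to U_R(\mf K)$. Unwinding the quotient definitions of the actions from the text immediately yields $h \rhd a = \pi_H(h \cdot a)$ and $h \lhd a = \pi_K(h \cdot a)$ for $h \in U_R(\mf K)$ and $a \in U_R(\mf H)$, while the counit identities $1 \lhd a = \epsilon(a)$ and $h \rhd 1 = \epsilon(h)$ follow at once.

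The pivotal observation is the identity $m^{-1}(z) = (\pi_H \otimes \pi_K) \circ \Delta(z)$ for every $z \in U_R(\mf G)$. Indeed, writing $m^{-1}(z) = \sum_i \alpha_i \otimes \beta_i$ and expanding $\Delta(\alpha_i \beta_i)$ using that $m$ is a coalgebra map, the projection $\pi_H \otimes \pi_K$ contracts each factor via $\pi_H|_{U_R(\mf K)} = \epsilon$ and $\pi_K|_{U_R(\mf H)} = \epsilon$, and the counit axiom returns $\sum_i \alpha_i \otimes \beta_i$. Applying this to $z = h \cdot b$ with $\Delta(h \cdot b) = \sum h_{(1)} b_{(1)} \otimes h_{(2)} b_{(2)}$ produces
\begin{equation*}
    m^{-1}(h \cdot b) = \sum \bigl(h_{(1)} \rhd b_{(1)}\bigr) \otimes \bigl(h_{(2)} \lhd b_{(2)}\bigr),
\end{equation*}
so that $h \cdot b$ in $U_R(\mf G)$ is given by the bicrossproduct formula. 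Multiplying on the left by $a \in U_R(\mf H)$ and on the right by $g \in U_R(\mf K)$ then yields \eqref{eq:bicrossproduct} directly, since both extremal factors already lie in the PBW-ordered form.

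With the bicrossproduct formula in hand, the compatibility relations \eqref{eq:crossedUg} follow from the associativity of multiplication in $U_R(\mf G)$ paired with the module-linearity of the projections. The first relation comes from applying $\pi_K$ to $(hg)\cdot a = h \cdot (g \cdot a)$: expanding $g \cdot a$ via \eqref{eq:bicrossproduct} and using that $\pi_K$ is right $U_R(\mf K)$-linear (because $m$ is right $U_R(\mf K)$-linear) extracts $(h \lhd (g_{(1)} \rhd a_{(1)}))(g_{(2)} \lhd a_{(2)})$. The second relation comes analogously from $\pi_H$ applied to $h \cdot (ab) = (h \cdot a) \cdot b$, using left $U_R(\mf H)$-linearity of $\pi_H$. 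The third, symmetry-type relation exploits cocommutativity of $U_R(\mf G)$: since it is a universal envelope of a Lie algebroid, $\tau \circ \Delta = \Delta$, so projecting $\Delta(h \cdot a)$ by $\pi_K \otimes \pi_H$ in the two orderings gives the stated equality.

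The main obstacle to watch for is bookkeeping of the various tensor-product conventions ($\rltensor_R$, $\lrtensor_R$, $\sttensor_A$) in the algebroid setting, together with checking that $\pi_H$ and $\pi_K$ genuinely satisfy the claimed one-sided $R$-module linearity when passing through $m^{-1}$. These checks are routine given the module-isomorphism properties of PBW and the Lie algebroid compatibility $[x,ra] = r[x,a] + \psi(x)(r)a$, but must be tracked carefully to avoid confusing left- versus right-$R$-structures in the coproduct.
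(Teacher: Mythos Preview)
Your proposal is correct and is precisely the standard matched-pair argument from Majid's work adapted to the Lie algebroid setting. The paper does not actually prove this proposition: it states that the result is a simple generalization of the corresponding statement for Lie algebras in \cite{majid1990physics} and explicitly omits the proof, so your outline is in fact more than what the paper provides and follows exactly the route the citation points to.
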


\subsubsection{Functions on formal groupoid}\label{subsubsec:formalgroupoid}

Let $\mf G$ be a Lie algebroid over $R$, which is free as \(R\)-module, and let $U_R(\mf G)$ be its universal enveloping algebra. Since it is an algebra over $R$, it is automatically an $R$-$R$-bimodule using left and right multiplication. Note that this $R$-$R$-bimodule structure is different from the $R$-$R$-bimodule structure of $U_R(\mf G)$ as a Hopf algebroid, since in this case source and target maps are equal. 

Denote by $\mf G^\dagger$ the $R$-linear dual of $\mf G$. Using the PBW theorem \ref{Lem:PBW}, we can identify $U_R(\mf G)$ as a left $R$-module (in fact, as a cocommutative coalgebra over $R$), with $S_R(\mf G)$, and therefore:
\be
\Hom_R (U_R(\mf G), R)=\Hom_R (S_R(\mf G), R)=\hat S_R (\mf G^\dagger).
\ee
Here, $\hat S_R (\mf G^\dagger)$ is the symmetric algebra over $R$ generated by $\mf G^\dagger = \Hom_R(\mf G,R)$ and completed with respect to the symmetric degrees. Since $U_R(\mf G)$ is also a right $R$-module, we obtain an $R$-$R$-bimodule structure on $\hat S_R (\mf G^\dagger)$ by
\be
\lag rfr', a\rag=\lag f, ar\rag r'\,,\qquad r,r' \in R,f \in \hat S_R (\mf G^\dagger). 
\ee

\begin{Lem}\label{lem:source_target_formal_groupoid}
    The two actions of $R$ on $\hat S_R (\mf G^\dagger)$ preserve the commutative algebra structure on $\hat S_R (\mf G^\dagger)$. Namely, the actions give rise to two ring homomorphism from $R$ to $\hat S_R (\mf G^\dagger)$. 
    
\end{Lem}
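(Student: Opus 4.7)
The plan is to exploit the duality between the coproduct of \(U_R(\mf G)\) and the product of \(\hat S_R(\mf G^\dagger)\). Since the PBW identification \(U_R(\mf G) \cong S_R(\mf G)\) is an isomorphism of cocommutative \(R\)-coalgebras, the commutative algebra structure on \(\hat S_R(\mf G^\dagger) = \Hom_R(U_R(\mf G),R)\) is precisely the convolution product
\begin{equation*}
    \lag fg,a\rag = \sum \lag f, a_{(1)}\rag\,\lag g, a_{(2)}\rag,
\end{equation*}
with multiplicative unit the counit \(\epsilon\) of \(U_R(\mf G)\). Hence it suffices to show that the two \(R\)-actions send \(r \in R\) to algebra endomorphisms of \(\hat S_R(\mf G^\dagger)\); equivalently, to verify that the maps \(\phi_R, \phi_L\colon R \to \hat S_R(\mf G^\dagger)\) defined by \(\phi_R(r) = \epsilon\cdot r\) and \(\phi_L(r) = r\cdot \epsilon\) are multiplicative.

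First I would give explicit formulas for these two candidate homomorphisms. By the definition of the bimodule structure, \(\phi_R(r)(a) = \lag \epsilon,a\rag r = \epsilon(a)r\) and \(\phi_L(r)(a) = \lag\epsilon, ar\rag = \epsilon(a\cdot s(r))\). Using the identity \(\epsilon(hg) = \psi(h)(\epsilon(g))\) from the corollary following the anchor axioms, together with \(\epsilon s = \mathrm{Id}_R\), the latter simplifies to \(\phi_L(r)(a) = \psi(a)(r)\).

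Next I would verify multiplicativity by a short Sweedler computation against an arbitrary \(a \in U_R(\mf G)\). For the right action,
\begin{equation*}
    (\phi_R(r_1)\phi_R(r_2))(a) = \sum \epsilon(a_{(1)})\,r_1\,\epsilon(a_{(2)})\,r_2 = \Big(\sum\epsilon(a_{(1)})\epsilon(a_{(2)})\Big)\,r_1r_2 = \epsilon(a)\,r_1 r_2,
\end{equation*}
where commutativity of \(R\) is used to reorder the scalars and the counit identity \(\sum \epsilon(a_{(1)})\epsilon(a_{(2)}) = \epsilon(a)\) collapses the sum. For the left action, the anchor axiom \(\psi(\Delta(h))(a \otimes b) = \psi(h)(ab)\), read in Sweedler form as \(\sum\psi(h_{(1)})(a)\psi(h_{(2)})(b) = \psi(h)(ab)\), gives
\begin{equation*}
    (\phi_L(r_1)\phi_L(r_2))(a) = \sum\psi(a_{(1)})(r_1)\,\psi(a_{(2)})(r_2) = \psi(a)(r_1 r_2) = \phi_L(r_1 r_2)(a).
\end{equation*}
Unit preservation is immediate from \(\phi_R(1) = \phi_L(1) = \epsilon\).

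The only real subtlety is to keep straight that the \(R\)-bimodule structure on \(\hat S_R(\mf G^\dagger)\) under consideration comes from the left and right multiplication structures on \(U_R(\mf G)\) as an ordinary \(R\)-algebra, and not from the Hopf-algebroid source/target (which coincide here); once this is clear, the proof reduces to the two three-line Sweedler computations above, driven respectively by the counit axiom and the multiplicativity property of the anchor map.
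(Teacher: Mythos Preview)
Your computations are correct, but there is a missing step, and the reduction you state is garbled. The lemma asserts that the two \(R\)-actions on \(\hat S_R(\mf G^\dagger)\) are \emph{induced by ring homomorphisms} \(R \to \hat S_R(\mf G^\dagger)\), i.e.\ that for each \(r\) the action of \(r\) coincides with multiplication by the fixed element \(\phi_L(r)\) (resp.\ \(\phi_R(r)\)). This is not the same as saying the action is ``by algebra endomorphisms'' (multiplication by a non-idempotent element is never a ring endomorphism), and it does not follow from the multiplicativity of \(\phi_L,\phi_R\) alone. You verify only that \(\phi_L\) and \(\phi_R\) are multiplicative; you never check that \(r\cdot f = \phi_L(r)\, f\) for a general \(f\).

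For the right action this identification is immediate from the counit axiom: \((f\cdot r')(a) = f(a)r' = \sum f(a_{(1)})\epsilon(a_{(2)})r' = (f\,\phi_R(r'))(a)\). For the left action one needs one more line, in the same spirit as your anchor computation: using \(\Delta(ar) = \sum a_{(1)}r\otimes a_{(2)}\) and \(\epsilon(a_{(1)}r) = \psi(a_{(1)})(\epsilon(r)) = \psi(a_{(1)})(r)\), one finds
\[
(r\cdot f)(a) = f(ar) = \sum \epsilon(a_{(1)}r)\,f(a_{(2)}) = \sum \psi(a_{(1)})(r)\,f(a_{(2)}) = (\phi_L(r)\,f)(a).
\]
Once this is in place, your Sweedler computations finish the proof.

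The paper takes a slightly different route: it verifies \(r\cdot(fg) = (r\cdot f)\, g = f\,(r\cdot g)\) directly by expanding \(\Delta(wr) = \Delta(w)\Delta(r)\) inside \(U_R(\mf G)\,\overline\otimes_R\, U_R(\mf G)\). Specializing \(f=\epsilon\) then gives \(r\cdot g = \phi_L(r)\,g\), and multiplicativity of \(\phi_L\) is a free consequence of the module axiom \((r_1r_2)\cdot\epsilon = r_1\cdot(r_2\cdot\epsilon)\). So the paper establishes the identification first and gets multiplicativity as a corollary, whereas you prove multiplicativity directly via the anchor identity \(\psi(\Delta(h))(a\otimes b)=\psi(h)(ab)\) but omit the identification step.
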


\begin{proof}
    It is clear for the right action. Let $f, g \in \hat S_R (\mf G^\dagger)$ and $r \in R$. For any $w\in U_R(\mf G)$, we have:
    \be
\lag r(fg), w\rag=\lag fg, wr\rag=\lag f\otimes g, \Delta (wr)\rag=\lag f\otimes g, \Delta (w)\Delta (r)\rag.
    \ee
Here, $\Delta (w)\Delta (r)$ is computed in $U_R(\mf G)\overline{\otimes}_R U_R(\mf G)$, so, by definition of \(\overline{\otimes}\), we have:
\be
\Delta (w)\Delta (r)=\sum w^{(1)}r\otimes w^{(2)}=\sum w^{(1)}\otimes w^{(2)}r.
\ee
Applying $\lag f\otimes g,-\rag$ to this we get the desired result: \(r(fg) = (rf)g = f(rg)\). 
\end{proof}

    One can think of the commutative algebra $\hat S_R (\mf G^\dagger)$ as the ring of functions on the formal groupoid that is given by exponentiation of the Lie algebroid $\mf G$. In this picture, the two ring homomorphisms (embeddings) from Lemma \ref{lem:source_target_formal_groupoid} are the source and the target maps associated to this groupoid. 
    More precisely, 
    \begin{equation}\label{eq:source_target_formal_groupoid}
        s(r)(x) = \langle 1,x\rangle r = \epsilon(x)r = r\epsilon(x) \textnormal{ and } t(r)(x) = \langle 1,xr\rangle \,,\qquad r \in R,x \in U_R(\fG) 
    \end{equation}
    defines source and target maps $s\colon R\to \hat S_R (\mf G^\dagger)$ and $t\colon R\to \hat S_R (\mf G^\dagger)$ such that the aforementioned $R$-$R$-bimodule structure on $\hat S_R (\mf G^\dagger)$ coincides with the \(s\)-\(t\)-module structure in the Hopf algebroid setting. Observe that the target map \(t\) does not look very explicit yet, however we will fix this in the \(\hbar\)-adic language below.
    
    Let us now define a comultiplication and counit that completes the triple \((\hat{S}(\fG^\dagger),s,t)\) to a Hopf algebroid $\hat S_R (\mf G^\dagger)$ with anchor map.

    \begin{Prop}
        There exists algebra homomorphisms
        \be
\Delta\colon \hat S_R (\mf G^\dagger)\longrightarrow \hat S_R (\mf G^\dagger)\sttensor_R \hat S_R (\mf G^\dagger),
        \ee
        where the tensor product on the RHS is the completed tensor product, and $\epsilon\colon \hat S_R (\mf G^\dagger)\to R$ making $\hat S_R (\mf G^\dagger)$ into a Hopf algebroid over $R$. This algebra has an anchor map given by left multiplication with $\epsilon$. 

    \end{Prop}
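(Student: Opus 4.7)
The plan is to produce the Hopf algebroid structure on $\hat S_R(\mf G^\dagger)$ by dualizing the cocommutative Hopf algebroid structure on $U_R(\mf G)$ developed in Section~\ref{subsubsec:LieU}. The PBW theorem (Lemma~\ref{Lem:PBW}) will be used throughout to guarantee that $U_R(\mf G)$ is free over $R$, so that the linear dual $\hat S_R(\mf G^\dagger) = \Hom_R(U_R(\mf G),R)$ and the relevant completed tensor products are well-behaved.

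The first step is to define the counit $\epsilon \colon \hat S_R(\mf G^\dagger) \to R$ by evaluation at the unit, $\epsilon(f) := \langle f,1\rangle$; the identities $\epsilon \circ s = \epsilon \circ t = \mathrm{Id}_R$ then follow immediately from~\eqref{eq:source_target_formal_groupoid} and $\epsilon_U(1) = 1$. Next, I would define the comultiplication by dualizing the multiplication $m_U$ of $U_R(\mf G)$:
\begin{equation*}
    \langle\Delta(f), x \otimes y\rangle := \langle f, xy\rangle, \qquad x,y \in U_R(\mf G).
\end{equation*}
Because $m_U$ is $R$-balanced and factors through $U_R(\mf G)\otimes_R U_R(\mf G)$, the resulting $\Delta(f)$ descends to an element of the completed source-target tensor product $\hat S_R(\mf G^\dagger) \sttensor_R \hat S_R(\mf G^\dagger)$, which via PBW and duality should be identified with $\Hom_R(U_R(\mf G)\otimes_R U_R(\mf G),R)$.

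Once $\Delta$ and $\epsilon$ are in place, the Hopf algebroid axioms of Definition~\ref{def:hopf_algebroid} can be verified by routine dualization. Coassociativity of $\Delta$ will follow from associativity of $m_U$; the counit identities $(\epsilon \otimes 1)\Delta = (1 \otimes \epsilon)\Delta = \mathrm{Id}$ from the unit axioms in $U_R(\mf G)$; the algebra-homomorphism property $\Delta(fg) = \Delta(f)\Delta(g)$ from the fact that $\Delta_U$ is itself an algebra homomorphism (as observed in the discussion preceding Proposition~\ref{prop:matchedULie}); and the compatibility $\Delta(f)(s(r) \otimes 1 - 1 \otimes t(r)) = 0$ is a direct translation of the $R$-balancedness of $m_U$ back through the duality.

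Finally, I would define the anchor by $\psi(f)(r) := \epsilon(f) \cdot r$, i.e.\ left multiplication by $\epsilon(f) \in R$ on $R$. This is manifestly an $R$-$R$-bimodule morphism satisfying $\psi(s(r))(r') = rr' = \psi(t(r))(r')$ and $\psi(f)(1) = \epsilon(f)$; the remaining identities $\varphi_t(\Delta f \otimes r) = f t(r)$ and $\varphi_s(\Delta f \otimes r) = f s(r)$ will then reduce, via commutativity of $\hat S_R(\mf G^\dagger)$, to the counit axioms already established. The main obstacle I anticipate is not in any individual algebraic check but rather in the precise set-up of the dualization: rigorously identifying the completed source-target tensor product $\hat S_R(\mf G^\dagger)\sttensor_R \hat S_R(\mf G^\dagger)$ with $\Hom_R(U_R(\mf G)\otimes_R U_R(\mf G), R)$ in a way that correctly encodes both distinct embeddings $s$ and $t$ and the topology from Remark~\ref{Rem:topology}, particularly because $\hat S_R(\mf G^\dagger)$ is commutative but not cocommutative. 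Once this identification is carefully verified, everything else should follow by standard duality arguments.
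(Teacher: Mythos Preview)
Your proposal is correct and follows essentially the same route as the paper: dualize the multiplication of $U_R(\mf G)$ to get $\Delta$, dualize the unit to get $\epsilon$, and read off the Hopf algebroid axioms from the corresponding structures on $U_R(\mf G)$. The paper resolves the ``main obstacle'' you flag---identifying $\hat S_R(\mf G^\dagger)\sttensor_R \hat S_R(\mf G^\dagger)$ with $\Hom_R(U_R(\mf G)\rltensor_R U_R(\mf G),R)$---via a one-line tensor--hom adjunction, and obtains the algebra-homomorphism property of $\Delta$ from the fact that $m$ is a morphism of cocommutative coalgebras (equivalent to your observation that $\Delta_U$ is an algebra map).
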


\begin{proof}
    Multiplication defines a map of $R$-$R$-bimodules:
    \be
        m\colon U_R(\mf G)\rltensor_R U_R(\mf G)\longrightarrow  U_R(\mf G),
    \ee
where $\rltensor_R$ has the same meaning as equation \eqref{eq:multirl}. By dualizing, we get
\be
\hat S_R (\mf G^\dagger)\longrightarrow \Hom_R (U_R(\mf G)\rltensor_R U_R(\mf G), R). 
\ee
Now the tensor-hom adjunction provides
\be
\Hom_R  (U_R(\mf G)\rltensor_R U_R(\mf G), R)= \Hom_R(U_R(\mf G), \Hom_R(U_R(\mf G), R))=\hat S_R(\mf G^\dagger)\sttensor_R \hat S_R(\mf G^\dagger).
\ee
The dual of $m$ therefore defines a morphism of bimodules:
\be
\Delta\colon \hat S_R (\mf G^\dagger)\to \hat S_R (\mf G^\dagger)\sttensor_R \hat S_R (\mf G^\dagger). 
\ee
The coassociativity of $\Delta$ clearly follows from associativity of $m$, so we only need to show that $\Delta$ is a morphism of commutative algebras. This follows from the fact that $m$ is a morphism of cocommutative coalgebras. 

The map $\epsilon$ is defined to be the dual of the unit map $R\to U_R(\mf G)$. It can be thought of as mapping $f\mapsto f(1)$, evaluating the functions $f$ at identity. If we think of $\hat S_R(\mf G)$ as functions on the formal groupoid associated to $\mf G$, then this is precisely the unit map of this groupoid. The statement about anchor map is clear. 

\end{proof}

\begin{Rem}\label{rem:evalutating_sttenosors}
    Note that the identification:
    \be
\Hom_R  (U_R(\mf G)\rltensor_R U_R(\mf G), R)\cong \hat S_R (\mf G^\dagger)\sttensor_R \hat S_R (\mf G^\dagger)
    \ee
    can be explicitly given by the following formula:
    \be\label{eq:stpair}
        \lag f\sttensor_R g, u_1\rltensor u_2\rag=g(u_1 f(u_2)) =(f(u_2)g)(u_1) = g(u_1)f(u_2) + \epsilon(g)\psi(u_2)f(u_1),
    \ee
for \(f, g\in \hat S_R (\mf G^\dagger), u_1, u_2\in U_R(\mf G)\).\hfill $\lozenge$

\end{Rem}

\begin{Rem}
    We further comment that in later sections, we use different versions of tensor products, and therefore different ways of pairing them. For example, given two free left $R$ modules $M, N$, we can identify 
    \be
M\otimes N^\dagger\cong \Hom_{R}(N, M).
    \ee
  Under this isomorphism, the pairing between $m\otimes_R f$ with $n\in N$ is given by
    \be
m\otimes_R f(n)=f(n)m,
    \ee
    which is slightly different from the above pairing in equation \eqref{eq:stpair}. In later applications, we will omit all the decorations in the tensor to avoid clustering of notations. To help reduce confusion, we will comment on which version of tensor and dual pairing is being used in each specific situations. \hfill $\lozenge$

\end{Rem}

The above statements can be translated into the $\hbar$-adic language, as in \cite{abedin2024yangian}. In particular, we identify the linear dual of $U_R(\mf G)\lbb\hbar\rbb$ with $S_R (\hbar \mf G^\dagger)\lbb\hbar\rbb$, and obtain a coproduct:
\be
S_R (\hbar \mf G^\dagger)\lbb\hbar\rbb\to S_R (\hbar \mf G^\dagger)\lbb\hbar\rbb\,\sttensor_{R\lbb\hbar\rbb} S_R (\hbar \mf G^\dagger)\lbb\hbar\rbb.
\ee
Dividing the formula on $S_R (\hbar \mf G^\dagger)\lbb\hbar\rbb$ by $\hbar$, we get a coproduct \(\Delta_\hbar\) on $S_R (\mf G^\dagger)\lbb\hbar\rbb$. We obtain a Hopf algebroid over \(R[\![\hbar]\!]\) in this way. This convenience of this language becomes clear in the following statement.

\begin{Prop}\label{prop:coproduct_symmetric_algebra_explicit}
    The following results are true:
    \begin{enumerate}
        \item Let $f\in S_R (\mf G^\dagger)\lbb\hbar\rbb$. If for any $x\in \mf G$, $\lag e^{\hbar x}, f\rag=0$, then $f=0$. Here, $e^{\hbar x}$ is computed in the algebra $U_R(\mf G)\lbb\hbar\rbb$. 

        \item The source and target maps \(s\colon R[\![\hbar]\!] \to S_R (\mf G^\dagger)\lbb\hbar\rbb\) and \(t\colon R[\![\hbar]\!] \to  S_R (\mf G^\dagger)\lbb\hbar\rbb\) are uniquely determined by
        \begin{equation}
        s(r)(e^{\hbar x}) = r \textnormal{ and } t(r)(e^{\hbar x}) = e^{\hbar \psi(x)}(r) \,,\qquad r \in R,x \in \fG. 
        \end{equation}

        \item Let us choose an $R$ basis for $\mf G$, say $x_i$, with brackets $[x_i, x_j]=f_{ij}^kx_k$ for $f_{ij}^k\in R$. Let $t^i$ be the dual basis over $R$, then
        \be
            \Delta_\hbar (t^i)=t^i\otimes 1+1\otimes t^i-\hbar\left( \psi(-)t^i+\frac{1}{2}\sum_{k,j}f_{kj}^it^k\otimes t^j\right) +\CO (\hbar^2),
        \ee
        where \(\psi(-)t^i \in \fG^\dagger \otimes \fG^\dagger\) is given by \(x \otimes y \mapsto \psi(y) t^i(x)\).
    \end{enumerate}
\end{Prop}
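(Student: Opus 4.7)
Part (1) is the statement that the exponentials $e^{\hbar x}$ with $x \in \mf G$ separate the rescaled coordinate ring $S_R(\mf G^\dagger)\lbb\hbar\rbb$. The plan is to fix the basis $x_a$ with dual $t^a$ and test the hypothesis on the restricted family $x = \sum_a \xi^a x_a$ with $\xi \in \C^d \subseteq R^d$. Because constants are annihilated by the derivations $\psi(x_a)$, they commute freely with the $x_a$'s in $U_R(\mf G)$, so $e^{\hbar x}$ admits a clean multinomial expansion $\sum_\alpha \tfrac{\hbar^{|\alpha|}}{\alpha!}\xi^\alpha x^\alpha$ with no anchor corrections. The pairing $\lag e^{\hbar x}, f\rag$ then becomes a formal series in $\hbar$ whose coefficients are polynomials in $\xi$ with values in $R$; vanishing for all $\xi \in \C^d$ forces these coefficients to vanish, since $\C$ is infinite and $R = \C\lbb\lambda\rbb$ is flat over $\C$. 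Hence $f = 0$.

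Part (2) is a direct verification using the source and target formulas from \eqref{eq:source_target_formal_groupoid}. Since $\psi(x)$ is a derivation with $\psi(x)(1) = 0$, one gets $\epsilon(x^n) = \psi(x)^n(1) = 0$ for all $n \geq 1$, and hence $\epsilon(e^{\hbar x}) = 1$, yielding $s(r)(e^{\hbar x}) = r$. For the target map, the anchor axiom $\epsilon(uv) = \psi(u)\epsilon(v)$ together with the multiplicativity of $\psi$ as an algebra homomorphism to $\End(R)$ gives $\epsilon(e^{\hbar x} r) = \psi(e^{\hbar x})(r) = e^{\hbar\psi(x)}(r)$, so $t(r)(e^{\hbar x}) = e^{\hbar\psi(x)}(r)$. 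Uniqueness of $s$ and $t$ as elements of $S_R(\mf G^\dagger)\lbb\hbar\rbb$ is then immediate from part (1) applied to the difference of any two candidates.

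For part (3), the plan is to match both sides on $e^{\hbar x_1}\otimes e^{\hbar x_2}$ for arbitrary $x_1, x_2 \in \mf G$, where a tensor version of part (1) guarantees that such pairings determine $\Delta_\hbar(t^i)$ uniquely. On one side, one uses the characterizing property that $\Delta_\hbar$ is obtained by dualizing multiplication in $U_R(\mf G)$ and computes $\lag t^i, e^{\hbar x_1}\cdot e^{\hbar x_2}\rag$: the product $e^{\hbar x_1}e^{\hbar x_2}$ expands via Baker-Campbell-Hausdorff to $\exp(\hbar(x_1+x_2) + \tfrac{\hbar^2}{2}[x_1,x_2] + O(\hbar^3))$, and applying $t^i$ via the PBW symmetrization projection to the degree-one component picks up both the structure-constant contribution $\tfrac{1}{2}\xi_1^a\xi_2^b f_{ab}^i$ from the bracket and an anchor contribution from commuting basis elements past the $R$-valued coefficients of $x_2$. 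On the other side, the proposed RHS is unwound via the pairing of Remark \ref{rem:evalutating_sttenosors}: the piece $t^i\otimes 1 + 1\otimes t^i$ produces $e^{\hbar\psi(x_1)}(t^i(e^{\hbar x_2})) + t^i(e^{\hbar x_1})$, while the two $O(\hbar)$ corrections $\psi(-)t^i$ and $\tfrac{1}{2}f_{kj}^i\,t^k\otimes t^j$ contribute at the crossed bidegree $(1,1)$ exactly as required. Matching the $\hbar$-coefficients term by term yields the claimed formula modulo $\hbar^2$.

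The main obstacle is the careful bookkeeping of the rescaling identifying $S_R(\hbar\mf G^\dagger)\lbb\hbar\rbb$ with $S_R(\mf G^\dagger)\lbb\hbar\rbb$ under which $\Delta_\hbar$ is displayed, combined with the precise PBW tracking of anchor corrections in $U_R(\mf G)$ whenever basis elements are commuted past $R$-valued coefficients. These corrections are precisely what distinguishes the Hopf algebroid coproduct from the purely Lie-algebraic Chevalley-Eilenberg term $\tfrac{1}{2}f_{kj}^i\,t^k\otimes t^j$ and force the appearance of the $\psi(-)t^i$ summand.
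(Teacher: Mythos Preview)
Your proposal is correct and follows essentially the same approach as the paper: separate by exponentials via PBW for part (1), verify the source/target formulas directly on $e^{\hbar x}$ for part (2), and for part (3) pair both sides against $e^{\hbar x_1}\otimes e^{\hbar x_2}$ using the dual-to-multiplication definition of $\Delta_\hbar$ on one side and the $\sttensor$-pairing of Remark~\ref{rem:evalutating_sttenosors} on the other.

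One simplification in the paper worth noting for part (3): the paper evaluates at \emph{basis} exponentials $e^{\hbar x_j}\otimes e^{\hbar x_k}$ rather than at arbitrary $x_1,x_2\in\mf G$. This bypasses your BCH expansion and the ``anchor contribution from commuting basis elements past the $R$-valued coefficients of $x_2$'' entirely --- the product $e^{\hbar x_j}e^{\hbar x_k}$ expands cleanly as $1+\hbar(x_j+x_k)+\tfrac{\hbar^2}{2}[x_j,x_k]+\CO(\hbar^3)$ with no anchor terms, since nothing is multiplied by a nonconstant $R$-coefficient. The anchor correction $\psi(-)t^i$ then appears not from the product side but from the extra $\epsilon(g)\psi(u_2)f(u_1)$ summand in the $\sttensor$-pairing formula of Remark~\ref{rem:evalutating_sttenosors}, applied to the $\hbar^0$-part $t^i\otimes 1 + 1\otimes t^i$ of $\Delta_\hbar(t^i)$. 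Your route via arbitrary $x_1,x_2$ works too, but the bookkeeping is heavier and the source of the anchor term is less transparent.
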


\begin{proof}
    The statement in 1.\ simply follows from PBW theorem \ref{Lem:PBW} in the same vein as \cite[Lemma 3.4.2.]{abedin2024yangian}. Now 2.\ follows from 1., \eqref{eq:source_target_formal_groupoid}, and
    \begin{equation}
        \langle t(r),e^{\hbar x}\rangle = \langle 1,e^{\hbar x}r\rangle = \langle 1, (e^{\hbar x}re^{-\hbar x})e^{\hbar x}\rangle = e^{\hbar \psi(x)}(r) \langle 1,e^{\hbar x}\rangle = e^{\hbar \psi(x)}(r)
    \end{equation}
    where we used the left linearity and the fact that \(e^{\hbar x}re^{-\hbar x} = e^{\hbar \textnormal{ad}(x)}r = e^{\hbar \psi(x)}(r)\) holds in \(U_R(\fG)[\![\hbar]\!]\).
    
    By definition, we have
    \be
    \begin{split}
        \lag \Delta_\hbar (t^i), e^{\hbar x_j}\otimes e^{\hbar x_k}\rag&=\lag t^i, e^{\hbar x_j}e^{\hbar x_k}\rag=\lag t^i, 1+\hbar (x_j+x_k)+\hbar^2/2[x_j,x_k]\rag+\CO(\hbar^2)
        \\
        &=\lag t^i,x_j+x_k\rag+\hbar/2 f^i_{jk} + \CO(\hbar^2).
    \end{split}
    \ee
    On the other hand, if we write \(t^{i} = \sum_{(t^{i})}  t^{i,(1)} \otimes t^{i,(2)}\), we have
    \begin{equation}
        \begin{split}
            &\lag        \Delta_\hbar (t^i), e^{\hbar x_j}\otimes e^{\hbar x_k}\rag=\sum_{(t^i)}\lp\lag t^{i,(1)},e^{\hbar x_k}\rag \lag t^{i,(2)},e^{\hbar x_j}\rag + \epsilon(t^{i,(2)})\psi(e^{\hbar x_j})\lag t^{i,(1)},e^{\hbar x_k}\rag \rp\\&=\psi(e^{\hbar x_j})\lag t^{i},e^{\hbar x_k}\rag+\sum_{(t^i)}\lag t^{i,(1)},e^{\hbar x_k}\rag \lag t^{i,(2)},e^{\hbar x_j}\rag
            \\&= \langle t^i,x_j+x_k\rangle + 
            \hbar\left( \psi(x_j)\lag t^{i},x_k\rag+\sum_{(t^i)}\lag t^{i,(1)},x_k\rag \lag t^{i,(2)},x_j\rag\right) + \CO(\hbar^2)
        \end{split}
    \end{equation}
    under consideration of Remark \ref{rem:evalutating_sttenosors}. Therefore, the $\hbar$-order term comes from evaluating on $x_j\otimes x_k$ using
    \be
\sum f_{jk}^i\lag t^k\otimes t^j, x_j\otimes x_k\rag=f_{jk}^i,
    \ee
   we find that  the statement in 2.\ follows. 
\end{proof}

\begin{Rem}
    Let us point out that we will use Proposition \ref{prop:coproduct_symmetric_algebra_explicit}.1.\ frequently in order to prove identities involving elements of \(S(\fG^\dagger)[\![\hbar]\!]\) by evaluating on elements of the form \(e^{\hbar x}\) for \(x \in \fG\) only.
\end{Rem}

\subsection{Lie bialgebroids and their quantizations}\label{subsec:qLiebialgebroid}
The quantization scheme for Lie bialgebras to Hopf algebras can be generalized to the algebroid language. The analog of a Lie bialgebra is thereby a Lie bialgebroid. 

To give the definition, it is useful to adapt some notions of differential calculus to the theory of Lie algebroids. Let \(\mf G\) be a Lie algebroid over \(R\) with bracket \([,]_\fG\) anchor map \(\psi_\fG\). Furthermore, let \(f \in \bigwedge^k \fG^\dagger\), and \(x,x_1,\dots,x_{k+1} \in R\). Then we can define the exterior derivative \(d_{\fG} f \in \bigwedge^{k+1}\fG^\dagger\) by: 
\begin{equation}
    \begin{split}
       d_{\fG}f(x_1,\dots,x_{k+1}) &\coloneqq \sum_{i=1}^{k+1} (-1)^{i+1}\psi_\fG(x_i)f(x_1,\dots,\widehat{x_i},\dots,x_{k+1}) \\&+ \sum_{i < j}(-1)^{i+j}f([x_i,x_j]_\fG,x_1,\dots,\widehat{x_i},\dots,\widehat{x_j},\dots,x_{k+1}).
    \end{split}
\end{equation}
Similarly, we can define the Lie derivative \(L_{\fG,x} f \in \bigwedge^k\fG^\dagger\) and the interior derivative \(\iota_{\fG,x} f\in \bigwedge^{k-1}\fG^\dagger\) by the following formulas: 
\begin{equation}
    \begin{split}
       &\iota_{\fG,x} f(x_1,\dots,x_{k-1}) \coloneqq f(x,x_1,\dots,x_{k-1}),\\
       &L_{\fG,x} f(x_1,\dots,x_k) \coloneqq \psi_\fG(x)f(x_1,\dots,d_k) - \sum_{i = 1}^k f(x_1,\dots,[x,x_i]_\fG,\dots,x_k).
    \end{split}
\end{equation}
As in the usual differential calculus, we have \(L_{\fG,x} = \iota_{\fG,x}d_{\fG} + d_{\fG}\iota_{\fG,x}\). 

\begin{Def}
    A Lie bialgebroid \((\fG,\fG^\dagger)\) consists of Lie algebroid structures on \(\fG\) and \(\fG^\dagger\) compatible in the sense that
    \begin{equation}
        d_{\fG^{\dagger}}[x,y]_{\fG} = [d_{\fG^{\dagger}}x,y]_{\fG} + [x,d_{\fG^{\dagger}}y]_{\fG}
    \end{equation}
    holds for all \(x,y \in \fG\). Here, the embedding \(\fG \subseteq (\fG^\dagger)^\dagger\) was used.\hfill $\lozenge$
    \end{Def}

    Similar to Lie bialgebras, Lie bialgebroids can be defined by Manin triples using the notion of classical double. However, the classical double of a Lie bialgebroid is in general not a Lie algebroid anymore, but a weaker algebraic structure, called Courant algebroid.

    \begin{Def}\label{def:courant_algebroid}
        A Courant algebroid \((\fD,[,],\psi,\langle,\rangle)\) over \(R\) consists of an \(R\)-module \(\fD\) equipped with a skew-symmetric \(\C\)-bilinear operation \([,] \colon \fD \times \fD \to \fD\), a non-degenerate symmetric \(R\)-bilinear form \(\langle,\rangle \colon \fD \times \fD \to R\), and a \(\C\)-linear \(\psi \colon \fD \to \textnormal{Der}(R)\) such that for all \(x_1,x_2,x_3 \in \fD\) and \(r,s \in R\) the following identities hold: 
        \begin{enumerate}
            \item \(\sum_{\sigma \in S_3}[[x_{\sigma(1)},x_{\sigma(2)}],x_{\sigma(3)}] = \frac{1}{6}\sum_{\sigma \in S_3}D\langle [x_{\sigma(1)},x_{\sigma(2)}],x_{\sigma(3)}\rangle\); 

            \item \([x_1,rx_2] = \psi(x_1)(r)x_2 + r[x_1,x_2]-\frac{1}{2}\langle x_1,x_2\rangle Dr\);

            \item \(\psi([x_1,x_2]) = [\psi(x_1),\psi(x_2)]\);

            \item \(\psi(x_1)\langle x_2,x_3\rangle = \langle [x_1,x_2] + \frac{1}{2}D\langle x_1,x_2\rangle ,x_3\rangle + \langle x_2,[x_1,x_3]+\frac{1}{2}D\langle x_1,x_3\rangle \rangle\);

            \item \(\langle Dr,Ds \rangle = 0\).
            \end{enumerate}
            Here, \(D \colon R \to \fD\) is the derivation uniquely determined by
            \begin{equation}
                \langle Dr,x\rangle = \psi(x)r
            \end{equation}
            for all \(x \in \fD\) and \(r \in R\).\hfill $\lozenge$
    \end{Def}

    \begin{Rem}
        By convention, the bilinear form in the defining properties of a Courant algebroid here is twice the bilinear form in the defining properties of a Courant algebroid in \cite{liu_weinstein_xu_manin_triples}.\hfill $\lozenge$
    \end{Rem}
    
    The following statements hold true.

    \begin{Thm}{\cite[Theorem 2.5 \& 2.6]{liu_weinstein_xu_manin_triples}}\label{thm:double_of_lie_bialgebroid}   
        For every Lie bialgebroid \((\fG,\fG^\dagger)\), \(\fD = \fG \oplus \fG^\dagger\) has a unique structure of a Courant algebroid such that the canonical pairing is the structure bilinear form and \(\fL\) and \(\fL^\dagger\) are subalgebroids.

        Conversely, let \(\fD\) be a Courant algebroid and \(\fD = \fL_1 \oplus \fL_2\) be a splitting into Lagrangian Courant subalgebroids such that the bilinear form of \(\fD\) induces an isomorphism \(\fL_2 \cong \fL_1^\dagger\). Then \((\fL_1,\fL_2 \cong \fL_1^\dagger)\) forms a Lie bialgebroid.\hfill \qedsymbol
    \end{Thm}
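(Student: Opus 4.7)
The plan is to prove the two directions separately, both relying on an explicit formula for the Courant bracket on $\fG \oplus \fG^\dagger$.

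\textbf{Forward direction.} Starting with a Lie bialgebroid $(\fG, \fG^\dagger)$, I would equip $\fD = \fG \oplus \fG^\dagger$ with the pairing $\langle x + \xi, y + \eta\rangle = \xi(y) + \eta(x)$, the anchor $\psi = \psi_\fG + \psi_{\fG^\dagger}$, and the skew-symmetric bracket defined by adding to the natural brackets on $\fG$ and $\fG^\dagger$ the cross-terms
\begin{equation}
L_{\fG^\dagger,\xi} y - L_{\fG^\dagger,\eta} x + \tfrac{1}{2}d_{\fG^\dagger}(\eta(x) - \xi(y)) \in \fG
\end{equation}
and symmetrically
\begin{equation}
L_{\fG,x}\eta - L_{\fG,y}\xi + \tfrac{1}{2}d_{\fG}(\xi(y) - \eta(x)) \in \fG^\dagger.
\end{equation}
Uniqueness is obtained from the observation that once $\fG$ and $\fG^\dagger$ are declared to be Courant subalgebroids with the prescribed pairing, axioms 2 and 4 of Definition \ref{def:courant_algebroid} (the Leibniz rule and the invariance of $\langle,\rangle$) force these cross-term formulas upon evaluation against test elements of $\fG$ and $\fG^\dagger$.

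For existence, I would verify the axioms of Definition \ref{def:courant_algebroid} one by one. Axioms 2--5 reduce, after direct expansion, to the Leibniz rule, the anchor-compatibility, and the symmetry properties already built into the two Lie algebroid structures. The substantive content lies in the modified Jacobi axiom (axiom 1). The strategy is to expand the cyclic triple bracket into a $\fG$-component and a $\fG^\dagger$-component, and observe that the contributions that are purely $\fG$ (resp.\ purely $\fG^\dagger$) vanish by the Jacobi identity on each side, while the mixed contributions reorganize, using the standard Cartan-type identities $L_{\fG^\dagger,\xi} = \iota_\xi d_{\fG^\dagger} + d_{\fG^\dagger}\iota_\xi$, to precisely the Lie bialgebroid compatibility condition $d_{\fG^\dagger}[x,y]_\fG = [d_{\fG^\dagger}x,y]_\fG + [x,d_{\fG^\dagger}y]_\fG$ together with its $\fG \leftrightarrow \fG^\dagger$ mirror. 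The residual discrepancy is absorbed by the correction $\tfrac{1}{6}\sum_{\sigma\in S_3} D\langle[\cdot,\cdot],\cdot\rangle$ in Definition \ref{def:courant_algebroid}.1.

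\textbf{Converse direction.} Assume $\fD = \fL_1 \oplus \fL_2$ is split into Lagrangian Courant subalgebroids. Each $\fL_i$ inherits a Lie algebroid structure from $\fD$: axiom 2, combined with isotropy of $\fL_i$ (which kills the $\tfrac{1}{2}\langle\cdot,\cdot\rangle Dr$ term), gives the $R$-linear Leibniz rule; axiom 3 makes the restricted anchor a Lie algebra homomorphism; and axiom 1, restricted to $\fL_i$, reduces to the ordinary Jacobi identity since $\langle[\cdot,\cdot]_{\fL_i},\cdot\rangle$ vanishes on the Lagrangian $\fL_i$. The nondegenerate pairing transports the Lie algebroid structure from $\fL_2$ to $\fL_1^\dagger$. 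To derive the bialgebroid compatibility, I would pair both sides of
\begin{equation}
d_{\fL_1^\dagger}[x,y]_{\fL_1} - [d_{\fL_1^\dagger}x,y]_{\fL_1} - [x,d_{\fL_1^\dagger}y]_{\fL_1} = 0
\end{equation}
against arbitrary $\xi,\eta \in \fL_2 \cong \fL_1^\dagger$, expand $d_{\fL_1^\dagger}$ via Cartan-type identities as above, and match each resulting term with the corresponding term coming from the modified Jacobi identity of the Courant bracket evaluated on $x,y,\xi,\eta$; axiom 4 of Definition \ref{def:courant_algebroid} is used to convert between pairings of brackets and anchor derivatives.

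\textbf{Main obstacle.} The decisive step in both directions is the modified Jacobi axiom of the Courant bracket. In the forward direction the combinatorics of cross-terms must align exactly with the bialgebroid compatibility and the $\tfrac{1}{6}D\langle[\cdot,\cdot],\cdot\rangle$ correction; in the converse direction the same computation has to be inverted. Tracking the normalization of the bilinear form (twice the convention of \cite{liu_weinstein_xu_manin_triples}, as noted in the remark following Definition \ref{def:courant_algebroid}) and the signs in the Cartan identities is where most of the care is required.
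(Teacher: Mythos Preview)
The paper does not give its own proof of this theorem: it is stated as a citation to \cite{liu_weinstein_xu_manin_triples} and marked with a \qedsymbol, after which the paper merely records the explicit Courant bracket formula \([x,f] = L_{\fG,x} f - L_{\fG^\dagger,f} x - \tfrac{1}{2}(d_{\fG} - d_{\fG^\dagger}) f(x)\) without verification. Your cross-term formulas specialize exactly to this (take \(\xi = 0\), \(y = 0\), \(\eta = f\)), so your construction agrees with the paper's outline, and your proof sketch is the standard argument from the cited reference; there is nothing in the paper to compare it against beyond that.
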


    Let us outline the construction of the Courant algebroid structure on \(\fD = \fG \oplus \fG^\dagger\) for a Lie bialgebroid \((\fG,\fG^\dagger)\). We have the canonical \(R\)-bilinear form \(\langle,\rangle \colon \fD \times \fD \to R\) defined by
    \begin{equation}
        \langle x_1 + f_1,x_2+f_2 \rangle \coloneqq f_1(x_2) + f_2(x_1)\,,\qquad x_1,x_2 \in \fG,f_1,f_2 \in \fG^\dagger.
    \end{equation} Furthermore, set \(\psi \coloneqq \psi_{\fG} + \psi_{\fG^\dagger}\), and let \([,] \colon \fD \times \fD \to \fD\) be the unique skew-symmetric \(\C\)-bilinear map with the properties:
    \begin{enumerate}
        \item \(\fG,\fG^\dagger \subseteq \fD\) are \(\C\)-subalgebras;

        \item \([x,f] = L_{\fG,x} f - L_{\fG^\dagger,f} x - \frac{1}{2}(d_{\fG} - d_{\fG^\dagger}) f(x)\) for all \(x \in \fG\) and \(f \in \fG^\dagger\).
    \end{enumerate}
    Then \((\fD,[,],\langle,\rangle,\psi)\) is the double Courant algebroid of \((\fG,\fG^\dagger)\).

    \begin{Exp}\label{ex:trivial_double}
    For any Lie algebroid \(\fG\) over \(R\), we can equip \(\fG^\dagger\) with the trivial Lie algebroid structure and then \((\fG,\fG^\dagger)\) is a Lie bialgebroid.

    The Lie bracket of the double Courant algebroid  \(\fD = \fG \oplus \fG^\dagger\) of \((\fG,\fG^\dagger)\) is the unique skew-symmetric \(\C\)-bilinear form defined by:
    \begin{enumerate}
        \item \(\fG \subseteq \fD\) is a \(\C\)-subalgebra and \(\fG^\dagger \subseteq \fG\) is an abelian ideal;

        \item \([x,f] = L_x f - \frac{1}{2}d f(x)\) for all \(x \in \fG\) and \(f \in \fG^\dagger\).\hfill $\lozenge$
    \end{enumerate}
    
    \end{Exp}

    It can be shown that a Hopf algebroid \(H\) over \(\C[\![\hbar]\!]\) which evaluates to the universal envelope \(U(\fG)\) of a Lie algebroid \(\fG\) for \(\hbar = 0\) induces a Lie bialgebroid \((\fG,\fG^\dagger)\). In fact, it can be shown that the datum of a Lie bialgebroid \((\fG,\fG^\dagger)\) is completely encoded into \(\fG\) and the action of \(d_{\fG^\dagger}\) on \(R\) and \(\fG\). Now, for a Hopf algebroid that evaluates to the universal enveloping algebra of \(\fG\), the \(\hbar\)-coefficient of the difference of the source and target map define the action of \(d_{\fG^\dagger}\) on \(R\) and the \(\hbar\)-coefficient of the skew-symmetrization its comultiplication defines the action of \(d_{\fG^\dagger}\) on \(\fG\).
    
    \begin{Def}
        The quantization of a Lie bialgebroid \(\fL\) is a topologically free Hopf algebroid \(H\) over \(\C[\![\hbar]\!]\) that evaluates to \(U(\fL)\) for \(\hbar = 0\) and induces the original Lie bialgebroid structure as described above.\hfill $\lozenge$
    \end{Def}

\subsubsection{Cotangent bialgebroid and double quotient of groupoid}\label{subsec:cotangbialg}

We now explain the Lie bialgebroid structure that is relevant to our work, and the idea of our quantization. This is very similar to the Lie algebra case we discussed in the previous work \cite{abedin2024yangian}. Let $\mf G$ be a Lie algebroid over $R$ and let us equip the direct sum $\fD:=\mf G\oplus \mf G^\dagger$ with the trivial double Courant algebroid structure described in Example \ref{ex:trivial_double}.

If we have a Lie algebroid decomposition $\mf G=\mf H\oplus \mf K$, then we obtain two Lagrangian subalgebroids
\be
\fL=\mf H\oplus \mf H^\perp,\qquad \fL^\dagger=\mf K\oplus \mf K^\perp.
\ee
Here $\mf H^\perp$ denotes the subspace of $\mf G^\dagger$ that pairs trivially with $\mf H$. This induces a Lie bialgebroid structure on $\fL$, and it is this structure that we are seeking to quantize. 

The idea of the quantization is again the double quotient construction. In the case of ordinary Lie algebras $\mf h\subseteq \mf g$, what we have shown in \cite{abedin2024yangian}, is that such a quantization comes from looking at the double quotient space
\be
H\!\setminus\! G/ H, \qquad G=\exp (\mf g),~ H=\exp (\mf h),
\ee
and the associated category of quasi-coherent sheaves. Essentially, we showed that the splitting of $\mf h\to \mf g$ given by $\mf k$ gives rise to a fiber functor for $\QCoh (H\setminus\! G/ H)$, whose endomorphism algebra quantizes the Lie bialgebra $\mf h\ltimes \mf h^\perp\subseteq T^*\mf g$.

Similarly, our quantization of the Lie algebroid $\fL$ is inspired by the geometry of double quotient. More specifically, there is a double quotient groupoid
\be
\CH\!\setminus\! \CG/ \CH, \qquad \CG=\exp (\mf G),~ \CH=\exp (\mf H),
\ee
whose category of sheaves is a monoidal category. We will use the splitting of $\mf G$ into $\mf H\oplus \mf K$ to construct a Hopf algebroid that quantizes $\fL$. This algebroid should be thought of as the endomorphism algebra of the fiber functor from $\QCoh (\CH\setminus\! \CG/ \CH)$ to $R$-$R$-bimodules, which is induced from the decomposition of $\mf G$. 

In this work we will only focus on Lie algebroids and splittings that arise from the geometry of $\Bun (\Sigma)$. However, our construction is completely general (as far as constructing the Hopf algebroid and twisting is concerned). In the next sections, we will explain how the geometry of $\Bun (\Sigma)$ around a regularly stable bundle $\CP$ leads to splittings of Lie algebroids over them, and we will construct Hopf algebroids that are twist-equivalent to the Yangian $Y_\hbar(\fd)$. 

\section{Setup}\label{sec:preliminaries}

In this section, we explain the set-up of this paper. We start with recalling the construction of Yangian $Y_\hbar (\fd)$ associated to $\fd$, since we will use certain features of this algebra in Section \ref{sec:constructHopf} and \ref{sec:dynamicalR}. We then recall geometry of the moduli space $\Bun (\Sigma)$ for a smooth projective curve $\Sigma$, especially around a regularly stable bundle $\CP$.

In Section \ref{sec:lie_bialgebroid}, we adapt the skew-symmetrization idea of \cite{abedin2024yangian} to the case of Lie algebroids and the dynamical spectral classical \(r\)-matrix from \cite{felder_kzb,abedin2024r}, which results in a Lie bialgebroid over the formal neighborhood of $\CP$ in $\Bun (\Sigma)$ that is defined by a skew-symmetric dynamical spectral classical \(r\)-matrix. We conclude this section by explaining how this Lie bialgebroid can be viewed as a dynamical twist of the classical limit Lie bialgebra of the Yangian over \(\fd\).

\subsection{Yangian over \(\fd\)}
\label{sec:Yangian}
Let us start with outlining the main results from \cite{abedin2024yangian} relevant to this work. We fix a simple complex connected Lie group $G$ with Lie algebra $\fg$, and denote by $\fd$ the \textit{cotangent Lie algebra} $T^*\fg$ of $\fg$. The Lie algebra $\fd$ is of the form
\be
\fd=\fg\ltimes \fg^*
\ee
and comes equipped with a bilinear pairing \(\langle,\rangle_\fd\), which simply pairs $\fg$ with $\fg^*$:
\begin{equation}
    \langle x + f, y +g \rangle_\fd = f(y) + g(x) \,,\qquad x,y \in \fg,f,g\in\fg^*.
\end{equation}
Let $C$ be the quadratic Casimir of $\fd$, which is of the form
\be
C \coloneqq \sum_{\alpha = 1}^d (I_\alpha \otimes I^\alpha + I^\alpha \otimes I_\alpha), 
\ee
 where \(\{I_\alpha\}_{\alpha =1}^d\subseteq \fg\) and \(\{I^\alpha\}_{\alpha = 1}^d\subseteq \fg^*\) are basis dual to each other. 

Let $\CK=\C\lpp t\rpp$ be the field of Laurent series, $\CO=\C\lbb t\rbb$ be the ring of power series and write \(\fd(\CO) \coloneqq \fd \otimes \CO\) as well as \(\fd(\CK) \coloneqq \fd \otimes \CK\). Consider the expression
    \begin{equation}\label{eq:yangs_rmatrix_d}
        \gamma(t_1,t_2) \coloneqq \frac{C}{t_1-t_2}\in \fd (\CK)\otimes \fd (\CK),
    \end{equation}
    where we expand $\frac{1}{t_1-t_2}$ in $t_2 = 0$. Then 
    \begin{equation}
        \delta \colon \fd(\CO) \longrightarrow \fd(\CO) \otimes \fd(\CO)\,,\qquad x \longmapsto [x(t_1) \otimes 1 + 1 \otimes x(t_2),r(t_1,t_2)]
    \end{equation}
    defines a Lie bialgebra structure on \(\fd(\CO)\). The above $\gamma$ is precisely Yang's $r$-matrix associated to $\fd$, and satisfies classical spectral Yang-Baxter equation, i.e.\
    \begin{equation}\label{eq:CYBE}
        [r^{(12)}(t_1,t_2),r^{(13)}(t_1,t_3)] + [r^{(12)}(t_1,t_2),r^{(23)}(t_2,t_3)] + [r^{(13)}(t_1,t_3),r^{(23)}(t_2,t_3)] = 0.
    \end{equation}
    holds for \(r = \gamma\).

The first main result of \cite{abedin2024yangian} is a quantization of this Lie bialgebra. 

\begin{Thm}\label{thm:previousresult1}
    There exists a Hopf algebra \(Y_\hbar(\fd)\) over $\C\lbb\hbar\rbb$ satisfying the following properties: 
\begin{enumerate}
    \item \(Y_\hbar(\fd)\) is a quantization of the Lie bialgebra $(\fd (\CO),\delta)$ defined above. 
    
    \item There exists a dense Hopf subalgebra \(Y^\circ_\hbar(\fd)\subseteq Y_\hbar(\fd)\) quantizing the Lie bialgebra $(\fd[t],\delta)$. 

    \item The Hopf algebra \(Y_\hbar(\fd)\) is the (up to unique isomorphism) unique  quantization that is graded with respect to loop-grading of \(t\) and with respect to the grading determined by the semi-direct product \(\fd = \fg \ltimes \fg^*\). We call \(Y_\hbar(\fd)\) the Yangian of \(\fd\). \hfill \qedsymbol
\end{enumerate}
\end{Thm}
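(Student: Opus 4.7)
My plan is to break the theorem into three parts and address each separately, leaning on Drinfeld-style constructions for the affine Yangian alongside the cotangent structure \(\fd = \fg \ltimes \fg^*\).

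\textbf{Existence of the Hopf algebra.} First, I would construct \(Y_\hbar(\fd)\) via the decomposition \(\fd(\CK) = \fd(\CO) \oplus t^{-1}\fd[t^{-1}]\), which together with the residue pairing induced by \(\langle,\rangle_\fd\) forms a Manin triple. Yang's \(r\)-matrix \(\gamma(t_1,t_2) = C/(t_1-t_2)\) is the classical limit whose skew-symmetric part governs \(\delta\). I would then obtain \(Y_\hbar(\fd)\) either by (i) applying the Etingof--Kazhdan quantization functor to the Lie bialgebra \((\fd(\CO),\delta)\), or preferably (ii) by an explicit presentation that mimics Drinfeld's current realization of \(Y_\hbar(\fg)\): take generators \(x_{n}, f_n\) for \(x \in \fg,\, f\in\fg^*,\, n \geq 0\), with quadratic relations deforming the Lie brackets of \(\fg(\CO) \ltimes \fg^*(\CO)\), and a coproduct determined by requiring the generating series to transform compatibly with \(\gamma\). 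A PBW-type theorem should identify the underlying module with \(U(\fd(\CO))[\![\hbar]\!]\).

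\textbf{Dense polynomial subalgebra.} For part (2), I would let \(Y^\circ_\hbar(\fd)\) be the subalgebra generated over \(\C[\![\hbar]\!]\) by the modes \(x_n\), \(f_n\) of \emph{finite} loop degree, before taking the \(t\)-adic completion that defines \(Y_\hbar(\fd)\). Density would follow from the PBW-type isomorphism once one verifies that the coproduct closes on \(Y^\circ_\hbar(\fd)\). This reduces to checking that \(\Delta_\hbar(x_n),\Delta_\hbar(f_n)\) involve only finitely many modes up to each fixed \(\hbar\)-order; this is the analogue of the classical fact that \(\delta\) preserves \(\fd[t]\), and should be provable by an induction on the loop degree using the Manin triple structure.

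\textbf{Uniqueness by bigraded rigidity.} For part (3), I would invoke a rigidity argument tailored to the bigrading. The loop grading (by powers of \(t\)) and the semidirect grading (assigning degree \(0\) to \(\fg\) and degree \(1\) to \(\fg^*\), matching \(\hbar\)) severely restrict deformation parameters: any two bigraded quantizations must agree up to finitely many possible coefficients in each bidegree, and the requirement that the classical limit reproduce \(\delta\) with the normalization from \(\gamma\) pins these down. Concretely, I would compute \(H^2\) of the relevant deformation complex of \((\fd(\CO),\delta)\) in fixed bidegree and show that all obstruction classes vanish and all automorphism freedom is exhausted by the normalization. The main obstacle here is controlling this cohomology; the key input will be that \(\fg\) is simple (so \(H^\bullet(\fg,\fg)\) is well-behaved) together with the fact that \(\fg^*\) is an abelian ideal, which keeps the computation tractable.

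\textbf{Anticipated hard step.} The most technically demanding piece will be establishing the PBW-type freeness of \(Y_\hbar(\fd)\) over \(\C[\![\hbar]\!]\) while simultaneously verifying closure of the coproduct; these are intertwined and typically require a careful ordering argument on generators together with a concrete spectral \(R\)-matrix (namely \(\CR_\gamma(z)\) from property (2) of the introduction), which provides a universal tool to check consistency of all relations.
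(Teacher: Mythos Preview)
The paper's route is genuinely different from both of your options. Rather than working with a Manin triple for \(\fd(\CK)\) or a current presentation, the paper exploits the asymmetry in \(\fd = \fg \ltimes \fg^*\) directly: it uses only the decomposition \(\fg(\CK) = \fg(\CO) \oplus t^{-1}\fg[t^{-1}]\) (for \(\fg\), not \(\fd\)) and builds \(Y_\hbar(\fd)\) as a smash product
\[
Y_\hbar(\fd) \;=\; U(\fg(\CO))\lbb\hbar\rbb \,\#_{\C\lbb\hbar\rbb}\, S(\fg^*(\CO))\lbb\hbar\rbb,
\]
where \(S(\fg^*(\CO))\lbb\hbar\rbb\) is identified with the rescaled dual of \(U(t^{-1}\fg[t^{-1}])\) via the residue pairing. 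The coproduct on the symmetric-algebra factor is simply dual to multiplication in \(U(t^{-1}\fg[t^{-1}])\); on \(U(\fg(\CO))\) it is given by conjugation by the explicit exponential \(\CE_\gamma = \exp\bigl(\hbar\sum_\alpha I_\alpha \otimes I^\alpha/(t_1-t_2)\bigr)\), which is characterized as the tensor representing the identity map on \(U(t^{-1}\fg[t^{-1}])\). The motivation is the geometry of \(G(\CO)\backslash \widehat{\Gr}_G\), not quantization functors or RTT-type presentations. Your idea for the dense subalgebra in part (2) matches the paper's (take the subalgebra generated by \(\fd[t]\)), and your uniqueness strategy via deformation cohomology is in the same spirit as the paper's appeal to Lie bialgebra cohomology in Drinfeld's sense.

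Your option (i), Etingof--Kazhdan, would produce \emph{some} quantization, but it is non-explicit and does not by itself deliver the bigrading or the smash-product form that the rest of the paper relies on for the dynamical twist. Your preferred option (ii) has a real gap: Drinfeld's current realization is built around a root-space decomposition and Serre-type relations, none of which \(\fd\) possesses in a usable form since \(\fg^*\) is an abelian ideal with no triangular structure of its own. Writing down a closed set of quadratic relations deforming \(\fd(\CO)\), defining a coproduct on the generators, and then proving PBW is not a routine adaptation of the simple case --- you would essentially have to rediscover the smash-product structure to make it work. The paper's construction sidesteps this entirely: PBW is immediate because the algebra is already a smash product of a universal envelope with a polynomial ring, and closure of the coproduct follows from the defining property of \(\CE_\gamma\) rather than from any inductive argument on loop degree. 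Finally, your ``anticipated hard step'' proposes using the spectral \(R\)-matrix \(\CR_\gamma(z)\) to verify consistency, but in the paper the logic runs the other way: \(\CR_\gamma(z)\) is constructed only \emph{after} the Hopf structure is in place, via a second (vertex-algebraic) meromorphic coproduct \(\Delta_z\).
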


Moreover, we showed that just as the usual Yangian, this Hopf algebra \(Y_\hbar(\fd)\) admits a spectral $R$-matrix. 

\begin{Thm}\label{thm:previousresult2}
    The Yangian \(Y^\circ_\hbar(\fd)\) admits an \(\CR_\gamma(z) \in (Y^\circ_\hbar(\fd) \otimes_{\C[\![\hbar]\!]} Y^\circ_\hbar(\fd))[\![z^{-1}]\!]\) satisfying
\begin{equation}\label{eq:Rmatrix_for_yangian}
    (\tau_z \otimes 1)\Delta_\hbar^{op}(x) = \CR_\gamma(z)((\tau_z \otimes 1)\Delta_\hbar(x)) \CR_\gamma(z)^{-1}\,,\qquad x \in Y^\circ(\fd)
\end{equation}
for the comultiplication \(\Delta_\hbar\) of \(Y_\hbar(\fd)\), where \(\tau_z\) is a quantization of the shift \(t \mapsto t + z\) by a formal parameter \(z\).
Moreover, \(\CR_\gamma(z)\) is a solution to the spectral quantum Yang-Baxter equation
\begin{equation}\label{eq:quantum_Yang_Baxter}
    \CR_\gamma^{(12)}(z_1-z_2)\CR_\gamma^{(13)}(z_1-z_3)\CR_\gamma^{(23)}(z_2-z_3) = \CR_\gamma^{(23)}(z_2-z_3)\CR_\gamma^{(13)}(z_1-z_3)\CR_\gamma^{(12)}(z_1-z_2)
\end{equation}
and quantizes \(r\), i.e.\ \(\CR_\gamma(z) = 1 + \hbar (\tau_z \otimes 1)r + \dots\) holds.\hfill \qedsymbol

\end{Thm}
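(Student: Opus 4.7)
The strategy is the standard construction of a universal \(R\)-matrix for a Yangian-type quantum group via the quantum double associated to a Manin triple. The relevant Manin triple here is \(\fd(\CK) = \fd(\CO) \oplus t^{-1}\fd[t^{-1}]\) with pairing coming from \(\langle,\rangle_\fd\) and the residue, which is precisely the splitting associated to Yang's \(r\)-matrix \(\gamma\) from \eqref{eq:yangs_rmatrix_d}.

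First, I would construct, in parallel to the construction of \(Y_\hbar(\fd)\) in Theorem \ref{thm:previousresult1}, a Hopf algebra \(Y^-_\hbar\) over \(\C[\![\hbar]\!]\) quantizing the Lie bialgebra structure on \(t^{-1}\fd[t^{-1}]\) induced by \(\delta\), and then exhibit a non-degenerate Hopf pairing
\begin{equation}
    \langle,\rangle \colon Y^\circ_\hbar(\fd) \otimes Y^-_\hbar \longrightarrow \C[\![\hbar]\!]
\end{equation}
that quantizes the canonical pairing between \(\fd[t]\) and \(t^{-1}\fd[t^{-1}]\). Existence of this pairing should follow from the uniqueness part of Theorem \ref{thm:previousresult1}.3 (using that \(Y^-_\hbar\) is isomorphic to a graded completion of \(Y_\hbar(\fd)\) after applying the shift \(\tau_z\)) together with the observation that the bigraded structure forces the pairing between homogeneous components to land in finite-dimensional spaces.

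Second, one assembles the canonical element of the pairing into a universal \(R\)-matrix of the quantum double: picking dual homogeneous bases \(\{e_\alpha\}\subseteq Y^\circ_\hbar(\fd)\) and \(\{e^\alpha\}\subseteq Y^-_\hbar\), set \(\CR \coloneqq \sum_\alpha e_\alpha \otimes e^\alpha\) and then use the shift automorphism to transport \(Y^-_\hbar\) into \(Y^\circ_\hbar(\fd)[\![z^{-1}]\!]\), which produces a candidate \(\CR_\gamma(z)\). By the general theory of quantum doubles (formulated for the topologically completed setting we need here), \(\CR\) automatically intertwines \(\Delta_\hbar\) and \(\Delta_\hbar^{\mathrm{op}}\) and satisfies the quantum Yang-Baxter equation; pushing these identities through the shift gives the spectral versions \eqref{eq:Rmatrix_for_yangian} and \eqref{eq:quantum_Yang_Baxter}. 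The classical-limit statement \(\CR_\gamma(z) = 1 + \hbar(\tau_z \otimes 1)\gamma + \dots\) then reduces to the fact that the leading term of the canonical element of a Hopf pairing quantizing the canonical pairing is, by construction, \(C\).

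The main obstacle is analytic: one must show that, after transporting through \(\tau_z\), the sum \(\sum_\alpha e_\alpha \otimes \tau_z(e^\alpha)\) actually defines an element of \((Y^\circ_\hbar(\fd) \otimes Y^\circ_\hbar(\fd))[\![z^{-1}]\!]\), rather than only of some larger completion. Equivalently, one must check that only finitely many \(\alpha\) contribute to each coefficient of \(z^{-k}\). This requires a careful analysis of the bigrading on \(Y_\hbar(\fd)\) (the loop-grading and the \(\fd = \fg \ltimes \fg^*\)-grading of Theorem \ref{thm:previousresult1}.3): the shift \(\tau_z\) raises loop-degree, and the bidegrees of dual basis elements are correlated, so only a finite-dimensional slice contributes at each order in \(z^{-1}\). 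All the remaining steps (verification of the intertwiner equation and the Yang-Baxter equation) reduce, after this analytic control is established, to formal manipulations inherited from the quantum double together with the fact that the pairing is a Hopf pairing.
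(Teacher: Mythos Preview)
Your approach is genuinely different from the one sketched in the paper, and it has a real gap.

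\textbf{What the paper does.} The paper does not invoke a quantum double or a Hopf pairing at all. Instead, it exploits that \(U(t^{-1}\fg[t^{-1}]) = V_0(\fg)\) carries a vertex algebra structure. From the state-to-field map it builds a second, \emph{meromorphic} comultiplication \(\Delta_z\) on \(Y_\hbar(\fd)\) which is weakly cocommutative. It then writes down an explicit element
\[
R_s(z) = \exp\Bigl(\tfrac{\hbar}{t_1-t_2+z}\sum_\alpha I_\alpha \otimes I^\alpha\Bigr) \in (Y_\hbar(\fd)\otimes Y_\hbar(\fd))[\![z^{-1}]\!]
\]
and checks directly that \((\tau_z\otimes 1)\Delta_\hbar = R_s(z)\,\Delta_z\,R_s(z)^{-1}\) together with a cocycle identity for \(R_s\). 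The \(R\)-matrix is then \emph{defined} as \(\CR_\gamma(z) \coloneqq R_s(-z)^{(21)}R_s(z)^{-1}\), and the intertwining relation and Yang--Baxter equation follow from the weak cocommutativity of \(\Delta_z\) combined with the cocycle property of \(R_s\). The analytic control you worry about is trivial here: \(R_s(z)\) is an explicit exponential of a series in \(z^{-1}\), so membership in \((Y^\circ_\hbar(\fd)\otimes Y^\circ_\hbar(\fd))[\![z^{-1}]\!]\) is immediate.

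\textbf{What each approach buys.} The paper's route yields a closed formula for \(\CR_\gamma(z)\), which is essential later: the construction of the dynamical \(R\)-matrix in Section~\ref{sec:dynamicalR} twists \(\CR_\gamma\) by \(F\), and the proof of Theorem~\ref{thm:QDYBE} uses the explicit cocycle identities for \(R_s\) and \(F\). A quantum-double canonical element would not hand you this factorisation.

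\textbf{The gap in your argument.} Your appeal to Theorem~\ref{thm:previousresult1}.3 to produce the Hopf pairing is not justified. That theorem says the bigraded quantization of \((\fd(\CO),\delta)\) is unique up to isomorphism; it says nothing about \(t^{-1}\fd[t^{-1}]\), nor does uniqueness of a quantization manufacture a non-degenerate Hopf pairing between two a priori different Hopf algebras. You would need to construct \(Y^-_\hbar\) explicitly and then \emph{exhibit} the pairing and verify all Hopf-pairing axioms by hand, which in this setting (where \(\fd\) is not semisimple and \(Y_\hbar(\fd)\) is an asymmetric smash product \(U(\fg(\CO))[\![\hbar]\!]\# S(\fg^*(\CO))[\![\hbar]\!]\) rather than anything self-dual) is a substantial undertaking, not a consequence of what has already been proved.
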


\begin{Rem}
    Observe that in \cite{abedin2024yangian}, we worked with \(-\delta\) instead of \(\delta\). It turns out that working with \(\delta\) is more convenient for us in this work, and this convention change leads to minor changes in the following formulas compared to \cite{abedin2024yangian}.\hfill $\lozenge$
\end{Rem}

Let us outline the arguments behind Theorem \ref{thm:previousresult1} and Theorem \ref{thm:previousresult2}.

\subsubsection{A sketch of the construction of $Y_\hbar (\fd)$}

The construction of $Y_\hbar (\fd)$ took inspiration from geometry of equivariant affine-Grassmannian. Roughly speaking, this algebra is built to mimic the structure of $G(\CO)\!\setminus\!\wh \Gr_G$, where $\wh \Gr_G$ is the formal completion of $\Gr_G$ at identity. 

More specifically, the construction of the quantization \(Y_\hbar(\fd)\) of \((\fd[\![t]\!],\delta)\) relies on the following two facts:
\begin{enumerate}
    \item The Lie algebra decomposition \(\fg(\CK) = \fg(\CO) \oplus t^{-1}\fg[t^{-1}]\) induces a tensor product factorization \(U(\fg(\CK)) = U(t^{-1}\fg[t^{-1}]) \otimes U(\fg(\CO))\). The latter provides a right-action \(\rhd\) of \(U(\fg(\CO))\) on \(U(t^{-1}\fg[t^{-1}])\) and a left-action \(\lhd\) of \(U(t^{-1}\fg[t^{-1}])\) on \(U(\fg(\CO))\).
    
    \item The pairing 
    \begin{equation}
        \langle \hbar f,x\rangle = \textnormal{res}_{t = 0}\langle f,x\rangle_\fd
    \end{equation}
    for \(f \in \fg^*(\CO)\) and \(x \in t^{-1}\fg[t^{-1}]\) induces an isomorphism \(U(t^{-1}\fg[t^{-1}])^*[\![\hbar]\!] \cong S(\hbar\fg^*(\CO))[\![\hbar]\!]\).
    
\end{enumerate}

The algebra \(Y_\hbar(\fd)\) is defined as the semi-direct product, or smash product,
\be
Y_\hbar(\fd)\coloneqq U (\fg (\CO))\lbb\hbar\rbb\#_{\C[\![\hbar]\!]} S(\fg^*(\CO))[\![\hbar]\!],
\ee
where the action of \(U(\fg(\CO))[\![\hbar]\!]\) on the algebra \(S(\fg^*(\CO))[\![\hbar]\!]\) is given by
\begin{equation}
    \langle f \circ x,y\rangle := \langle f,x\rhd y\rangle. 
\end{equation}

In order to define the comultiplication \(\Delta_\hbar\) of \(Y_\hbar(\fd)\), we have to define it on its two factors \(U(\fg(\CO))[\![\hbar]\!]\) and \(S(\fg^*(\CO))[\![\hbar]\!]\) in a compatible way.
On \(S(\fg^*(\CO))[\![\hbar]\!]\) we define \(\Delta_\hbar\) dual to the multiplication map of \(U(t^{-1}\fg[t^{-1}])[\![\hbar]\!]\):
\begin{equation}
    \langle \Delta_\hbar(f),y_1 \otimes y_2\rangle \coloneqq \frac{1}{\hbar}\langle \Delta_\hbar(\hbar f),y_1 \otimes y_2\rangle \coloneqq \frac{1}{\hbar}\langle \hbar f,y_2y_1\rangle
\end{equation}
for all \(f \in \fg^*[\![t]\!]\) and \(y_1,y_2 \in U(t^{-1}\fg[t^{-1}])[\![\hbar]\!]\).

In order to define \(\Delta_\hbar\) on \(U(\fg(\CO))\) in a compatible way, we need to introduce a tensor series
\begin{equation}\label{eq:CEgamma}
    \CE_\gamma := \exp\left(\frac{\hbar}{t_1-t_2}\sum_{\alpha = 1}^d I_\alpha \otimes I^\alpha\right) \in U(t^{-1}\fg[t^{-1}])[\![\hbar]\!] \otimes_{\C[\![\hbar]\!]} S(\fg^*(\CO))[\![\hbar]\!].
\end{equation}
This tensor series $\CE_\gamma$ is uniquely characterized by the property that $\CE_\gamma (x)=x$ for any $x\in U(t^{-1}\fg[t^{-1}])$, where the evaluation of a tensor \(y \otimes f \in U(t^{-1}\fg[t^{-1}])[\![\hbar]\!] \otimes_{\C[\![\hbar]\!]} S(\fg^*(\CO))[\![\hbar]\!]\) on \(x \in U(\fg(\CO))[\![\hbar]\!]\) is \(f(x)y\). We define 
\begin{equation}
    \Delta_\hbar(x) = \CE^{-1}_\gamma(x \otimes 1 + 1 \otimes x)\CE_\gamma,
\end{equation}
where we think of $x \otimes 1 + 1 \otimes x$ and $\CE_\gamma$ as elements in $U(\fg(\CK))[\![\hbar]\!]\otimes_{\C[\![\hbar]\!]} Y_\hbar (\fd)$ and compute the multiplication in this algebra. It turns out that $\Delta_\hbar|_{U(\fg(\CO))}$ is valued in the subalgebra $U(\fg(\CO))[\![\hbar]\!]\otimes Y_\hbar (\fd)$ and, combined with \(\Delta_\hbar|_{S(\fg^*(\CO))}\) defined before, we obtain a well-defined coassociative algebra homomorphism \(\Delta_\hbar\colon Y_\hbar(\fd) \to Y_\hbar(\fd) \otimes_{\C[\![\hbar]\!]} Y_\hbar(\fd)\). This is the desired comultiplication.

The counit is defined on $S(\fg^*(\CO))[\![\hbar]\!]$ by 
\be
\epsilon (f)=\lag f, 1\rag, \qquad 1\in U(t^{-1}\fg[t^{-1}]),
\ee
and on $U(\fg(\CO))$ by $\epsilon(x)=0$ for all $x\in \fg(\CO)$. The antipode \(S\) is defined on $S(\fg^*(\CO))[\![\hbar]\!]$ by
\be
\lag S(f), x\rag=\lag f, S(x)\rag, \qquad x\in U(t^{-1}\fg[t^{-1}]),
\ee
and on $U(\fg(\CO))[\![\hbar]\!]$ essentially by the requirement that $\nabla ((S\otimes 1)\Delta_\hbar(x))=0$ for all \(x \in \fg(\CO)\). Here, \(\nabla \colon Y_\hbar(\fd) \otimes_{\C[\![\hbar]\!]} Y_\hbar(\fd) \to Y_\hbar(\fd)\) is the multiplication map. 

One can then check that these structures are compatible and defines a Hopf algbera structure on \(Y_\hbar(\fd)\), and that it indeed provides a quantization of \((\fd(\CO),\delta)\), proving Theorem \ref{thm:previousresult1}.1. The subalgebra of \(Y_\hbar(\fd)\) generated by \(\fd[t]\) is then the Hopf subalgebra \(Y^\circ_\hbar(\fd)\) desired in Theorem \ref{thm:previousresult1}.2. The uniqueness in Theorem \ref{thm:previousresult1}.3.\ is proved using Lie bialgebra cohomology in the sense of \cite[Section 9]{drinfeld1986quantum}.

\subsubsection{A sketch of constructing the spectral $R$-matrix}

In order to define the comultiplication \(\Delta_\hbar\) of \(Y_\hbar(\fd)\), we used the fact that \(U(t^{-1}\fg[t^{-1}])\) is an algebra. However, \(U(t^{-1}\fg[t^{-1}])=V_0(\fg)\) is also naturally also a vertex algebra, the vacuum vertex algebra of $\fg(\CK)$ at level $0$. This can be used to define a second meromorphic comultiplication \(\Delta_z\) on \(Y_\hbar(\fd)\), which is central to the construction of \(\CR_\gamma(z)\).

Let \(\CY \colon V_0(\fg) \to \textnormal{End}(V_0(\fg))[\![z,z^{-1}]\!]\) be the state-to-field map of \(V_0(\fg)\). We define \(\Delta_z\) on \(S(\fg^*(\CO))\) via 
\begin{equation}
    \langle \Delta_z(f),y_1 \otimes y_2\rangle := \frac{1}{\hbar}\langle \Delta_z(\hbar f),y_1 \otimes y_2\rangle := \frac{1}{\hbar}\langle \hbar f,\CY (y_2,z)y_1\rangle
\end{equation}
for all \(f \in \fg^*(\CO)\) and \(y_1,y_2 \in U(t^{-1}\fg[t^{-1}])[\![\hbar]\!]\).

In order to define \(\Delta_z\) on \(U(\fg(\CO))\), observe that the derivative \(\partial_t \colon \fg(\!(t)\!) \to \fg(\!(t)\!)\) induces a Hopf algebra derivation \(T\) of \(Y_\hbar(\fd)\). The map \(\tau_z \coloneqq e^{zT}\colon Y_\hbar(\fd) \to Y_\hbar(\fd)[\![z]\!]\) can be thought of as a quantization of \(t \mapsto t + z\). Then we define \(\Delta_z\) on \(U(\fg(\CO))\) as
\begin{equation}
    \Delta_z(x) = x \otimes 1 + 1 \otimes \tau_z(x)
\end{equation}
for all \(x \in \fg(\CO)\). It turns out that \(\Delta_z\) is a well-defined weakly coassociative and weakly cocommutative meromorphic comultiplication on \(Y_\hbar(\fd)\). The ``weakly'' here refers to the fact that one has to adjust the respective axioms by re-expanding Laurent series in appropriate domains.

It turns out that \(\Delta_z\) and \(\Delta_\hbar\) are related through a tensor series 
\begin{equation}\label{eq:R_s}
    R_s(z) = \exp\lp \frac{\hbar}{t_1-t_2+z}\sum_{\alpha = 1}^NI_\alpha \otimes I^\alpha\rp \in (Y_\hbar(\fd) \otimes_{\C[\![\hbar]\!]} Y_\hbar(\fd))[\![z^{-1}]\!]    
\end{equation}
via \((\tau_z\otimes 1)\Delta_\hbar(x)=R_s(z)\Delta_z(x)R_s(z)^{-1}\) for all \(x \in Y_\hbar(\fd)\). Here, we used the expansion
\begin{equation}\label{eq:expansion_in_Rs}
    \frac{1}{t_1-t_2+z} = \sum_{k = 0}^\infty \frac{(t_2-t_1)^k}{z^{k+1}} \in \C[t_1,t_2][\![z^{-1}]\!].
\end{equation}
Furthermore, \(R_s(z)\) satisfies
\begin{equation}
    (\Delta_{z_1}\otimes 1)(R_s(z_2)^{-1})R_{s}^{(12)}(z_1)^{-1}=(1\otimes \Delta_{z_2})(R_s(z_1+z_2)^{-1}) R_{s}^{(23)}(z_2)^{-1}.
\end{equation}
Using these properties of \(R_s(z)\) as well as the weak cocommutativity of \(\Delta_z\), one can show that
\begin{equation}\label{eq:definition_R}
    \CR_\gamma(z) = R_s(-z)^{(21)}R_s(z)^{-1}
\end{equation}
satisfies the claims of Theorem \ref{thm:previousresult2}. 

\subsection{Moduli space of \(G\)-bundles and its formal completion}\label{sec:moduli_space}
As before, let \(G\) be a simple complex connected Lie group $G$ with Lie algebra $\fg$. We denote by \(\textnormal{Bun}_G(\Sigma)\) the moduli space of \(G\)-bundles on \(\Sigma\). It is well-known that this is a smooth algebraic stack of pure dimension
\(N \coloneqq (g-1) \dim(\fg)\). Moreover, the open substack of regularly stable \(G\)-bundles, i.e.\ \(G\)-bundles whose automorphisms coincide with the center \(C(G)\) of \(G\), is a \(C(G)\)-gerbe over a smooth quasi-projective algebraic variety, which we denote by \(\textnormal{Bun}_G^0(\Sigma)\).    

Since \(G\) is simple, for every point \(p\in \Sigma\), \(\Bun(\Sigma)\) has a the following very useful description as a double quotient of loop groups: 
\begin{equation}\label{eq:loop_group_uniformization}
    \textnormal{Bun}_G(\Sigma) = G(\CO) \setminus G(\CK) \,/\, G(\Sigma^\circ)\,,\qquad \Sigma^\circ \coloneqq \Sigma \setminus \{p\}.  
\end{equation}
This identification can be understood set-theoretically as follows:
\begin{itemize}
    \item Using the open covering \(\Sigma = \Sigma^\circ \cup \mathbb{D}\), where \(\mathbb{D} \cong \textnormal{Spec}(\CO)\) is the formal disc around \(p\), any element \(g \in G(\mathbb{D}\setminus \{p\}) \cong G(\CK)\) can be used to glue the trivial \(G\)-bundles on \(\mathbb{D}\) and \(\Sigma^\circ\) together in order to obtain a \(G\)-bundle \(\CP_g\);
    
    \item Every \(G\)-bundle \(\CP\) is \'etale trivializable on the affine curve \(\Sigma^\circ\) and the formal disc \(\mathbb{D}\) and therefore \(\CP \cong \CP_g\) for some \(g \in G(\CK)\);

    \item Two elements \(g,g'\in G(\CK)\) satisfy \(\CP_g \cong \CP_{g'}\) if and only if they are related by a change of trivializations \(h_+ \in G(\CO)\) and \(h_- \in G(\Sigma^\circ)\) on \(\mathbb{D}\) and \(\Sigma^\circ\) such that \(g' = h_+ g h_-\).
\end{itemize}
Importantly, the identification \eqref{eq:loop_group_uniformization} is not only set-theoretic but also algebro-geometric in the sense that it is an isomorphism of stacks. Thereby, recall that \(G(\CK)\) is an ind-affine algebraic group of infinite type, \(G(\Sigma^\circ) \subseteq G(\CK)\) is an ind-algebraic subgroup, \(G(\CO) \subseteq G(\CK)\) is an affine algebraic subgroup, the quotient \(G(\CK)\,/\, G(\Sigma^\circ)\) is a scheme (of infinite type), and therefore the quotient stack on the RHS in \eqref{eq:loop_group_uniformization} is indeed a stack.

Let \((G(\CO)\setminus G(\CK))^0\) and \(G(\CK)^0\) be the preimage of the open substack of regularly stable \(G\)-bundles under the canonical projection \(G(\CO)\setminus G(\CK) \to \Bun(\Sigma)\) and \(G(\CK) \to \Bun(\Sigma)\) respectively. 
The projection \((G(\CO)\setminus G(\CK))^0 \to \Bun^0(\Sigma)\) admits local \'etale quasi-sections; see e.g.\ \cite{laszlo_hitchin_wzw}. Moreover, the projection \(G(\CK) \to G(\CO)\setminus G(\CK)\) admits a section in the Zariski topology. Combined, we can see that \(G(\CK)^0 \to \Bun^0(\Sigma)\) admits a local \'etale quasi-section. 

This means that for every \(\CP \in  \Bun^0(\Sigma)\), where we recall that \(\Bun^0(\Sigma)\) is the coarse moduli space of regularly stable \(G\)-bundles, there exists an \'etale morphism \(P \colon U \to \Bun^0(\Sigma)\) and a morphism \(\sigma \colon U \to G(\CK)\) such that the diagram
\begin{equation}\label{eq:etale_quasisection}
    \xymatrix{U \ar[r]^\sigma\ar[d]_{P} & G(\CK) \\  \Bun^0(\Sigma)  & G(\CK)_0 \ar[l]\ar[u]_\subseteq},
\end{equation}
commutes and \(\CP \in P(U)\). In other words, for every \(u \in U\), \(\sigma(u) \in G(\CK)\) defines the \(G\)-bundle \(P(u) \in  \Bun^0(\Sigma)\) in the sense that \(P(u) = \CP_{\sigma(u)}\). 

Now \(U\) is smooth, so for any \(u \in U\) such that \(P(u) = \CP\), we can pass to the formal completion \(B\) of \(U\) around \(u\) and obtain a formal section \(\sigma \colon B \to G(\CK)\) of the projection \(G(\CK) \to \Bun(\Sigma)\) around \(\CP\). Since \(U\) is of dimension \(N\), we can chose coordinates \(\lambda = (\lambda_1,\dots,\lambda_N)\) on \(B\), i.e.\ we fix an isomorphism \(B \cong \textnormal{Spf}(R)\), where \(R = \C[\![\lambda]\!] \coloneqq \C[\![\lambda_1,\dots,\lambda_N]\!]\). Let us summarize:

\begin{Prop}\label{prop:formal_section_of_Gbundles}
    For every regularly stable \(G\)-bundle \(\CP\), there exists a regular map
    \begin{equation}
        \sigma \colon B \coloneqq \textnormal{Spf}(R) \longrightarrow G(\CK)    
    \end{equation}
    and an embedding \(B \to \Bun(\Sigma)\) such that \(\CP = \CP_{\sigma(0)}\) and
    \begin{equation}\label{eq:etale_quasisection}
        \xymatrix{B \ar[rr]^\sigma\ar[dr] && G(\CK) \ar[dl] \\ & \Bun(\Sigma) &}
    \end{equation}
    commutes. Here, \(R = \C[\![\lambda]\!] = \C[\![\lambda_1,\dots,\lambda_N]\!]\) and \(0 \in B\) is the maximal ideal of \(R\).\hfill \qed
\end{Prop}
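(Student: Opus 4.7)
The plan is to directly combine the two geometric facts invoked in the paragraphs preceding the statement. First, by \cite{laszlo_hitchin_wzw}, the projection $(G(\CO)\setminus G(\CK))^0 \to \Bun^0(\Sigma)$ admits an étale local quasi-section through any regularly stable $\CP$: there exist a smooth affine scheme $V$, an étale morphism $P \colon V \to \Bun^0(\Sigma)$ with $\CP \in P(V)$, together with a lift $\widetilde{\sigma} \colon V \to (G(\CO) \setminus G(\CK))^0$ of $P$. Second, the projection $\pi \colon G(\CK) \to G(\CO) \setminus G(\CK)$ is a right $G(\CO)$-torsor which is trivializable in the Zariski topology on the base, by a standard big-cell argument on the affine Grassmannian.

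Next, I would pull back the $G(\CO)$-torsor $\pi$ along $\widetilde{\sigma}$ and choose a trivialization over a Zariski open neighborhood of a chosen preimage $u \in V$ of $\CP$. Possibly after shrinking $V$, this produces a morphism $\sigma \colon V \to G(\CK)$ lifting $\widetilde{\sigma}$, and in particular $\CP_{\sigma(v)} \cong P(v)$ for all $v \in V$. Passing to the formal completion $B$ of $V$ at $u$, the smoothness of $\Bun^0(\Sigma)$ of pure dimension $N = (g-1)\dim(\fg)$ together with the étaleness of $P$ implies that $V$ is smooth of the same dimension at $u$, so $B \cong \textnormal{Spf}(R)$ with $R = \C[\![\lambda_1,\dots,\lambda_N]\!]$ after choosing formal coordinates. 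The restriction $\sigma|_B \colon B \to G(\CK)$ then satisfies $\CP_{\sigma(0)} = \CP_{\sigma(u)} = \CP$, and the composition $B \to V \to \Bun^0(\Sigma) \hookrightarrow \Bun(\Sigma)$ supplies the embedding $B \to \Bun(\Sigma)$; the commutativity of the diagram \eqref{eq:etale_quasisection} is built into the construction.

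The bulk of the work is hidden in the first invoked fact, which rests on the GIT construction of $\Bun^0(\Sigma)$ as a coarse moduli space of a $C(G)$-gerbe and the descent of the $G(\CO)$-torsor $G(\CK)^0 \to (G(\CO)\setminus G(\CK))^0$; the second fact is purely loop-group theoretic and follows from the Iwahori/Bruhat decomposition on $G(\CK)$. Once both are granted, only the passage to the formal completion and the selection of coordinates remain, which are routine consequences of smoothness.
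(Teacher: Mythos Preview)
Your proposal is correct and follows essentially the same approach as the paper: the paper's argument, given in the paragraphs immediately preceding the proposition, likewise combines the \'etale quasi-section of $(G(\CO)\setminus G(\CK))^0 \to \Bun^0(\Sigma)$ from \cite{laszlo_hitchin_wzw} with a Zariski-local section of $G(\CK)\to G(\CO)\setminus G(\CK)$, and then passes to the formal completion at a chosen preimage of $\CP$. Your write-up adds a bit more detail on the torsor trivialization step, but the strategy is identical.
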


\subsection{Lie bialgebroids from the moduli space of $G$-bundles}\label{sec:LieHitchin}

In this section, we construct a Lie bialgebroid structure to any formal trivialization \(\sigma\colon B \to G(\CK)\) of the moduli space of \(G\)-bundles as in Proposition \ref{prop:formal_section_of_Gbundles}. More precisely, consider the Lie algebroid $\mf G \coloneqq T_R \ltimes \fg (\CK)\lbb\lambda\rbb$ and its subalgebroid \(\fG_+ \coloneqq T_R \ltimes \fg(\CO) \subseteq \fG\), where \(T_R  \coloneqq \textnormal{Der}(R)\) is the Lie algebroid of continuous derivations of \(R\). Following \cite{felder_kzb,abedin2024r}, we will associate another subalgebroid \(\fG_-^\sigma \subseteq \fG\) to \(\sigma\) such that \(\fG = \fG_+ \oplus \fG_-^\sigma\). This will give rise to a Lie bialgebroid structure according to the general scheme from Section \ref{subsec:cotangbialg}.

Furthermore, we will explain in this section how this Lie bialgebroid is essentially controlled by the classical dynamical $r$-matrix from \cite{felder_kzb,abedin2024r} and show that this Lie bialgebroid structure can be obtained from dynamically twisting the Lie bialgebra structure induced from Yang's $r$-matrix $\gamma$. 

\subsubsection{Construction of the Lie bialgebroid structure}\label{sec:lie_bialgebroid}

As explained in Section \ref{sec:moduli_space}, for the formal neighbourhood \(B\) of a regularly stable \(G\)-bundle in the moduli space \(\textnormal{Bun}_G\) of \(G\)-bundles, there exists a section \(\sigma \colon B \to G(\CK)\) of the canonical projection \(G(\CK) \to \textnormal{Bun}_G(\Sigma) = G(\CO) \setminus G(\CK) \, /\, G(\Sigma^\circ)\) given by loop group uniformization. We fix coordinates on \(B\), in particular, we identify the ring of functions \(R\) on \(B\) with \(\C[\![\lambda]\!]= \C[\![\lambda_1,\dots,\lambda_N]\!]\). 

The map \(\sigma\) defines a complete formal family of \(G\)-bundles around the chosen regularly stable \(G\)-bundle. Moreover, the Lie algebra 
\begin{equation}
    \gout^\sigma \coloneqq \textnormal{Ad}(\sigma)\fg(\Sigma^\circ)[\![\lambda]\!]    
\end{equation}
realizes the sections of the adjoint bundle associated to this family of \(G\)-bundles.

Consider \(\xi_\alpha \coloneqq \sigma^{-1}\partial_\alpha \sigma \in \fg(\CK)[\![\lambda]\!]\) for every \(\alpha \in \{1,\dots, N\}\), where \(\partial_\alpha \coloneqq \partial/\partial \lambda_\alpha\).  It is easy to calculate that
\begin{equation}
    \partial_\alpha \xi_\beta - \partial_\beta \xi_\alpha + [\xi_\alpha,\xi_\beta] = 0.
\end{equation}
Consider the Lie algebroid \(\fG \coloneqq T_R \ltimes \fg(\CK)[\![\lambda]\!]\) with the anchor map given by the canonical projection \(\psi \colon \fG \to T_R\) and the semi-direct product bracket given by the action of \(T_R\) on \(\fg(\CK)[\![\lambda]\!]\) by derivations in \(\lambda = (\lambda_1,\dots,\lambda_N)\).

Then the space
\begin{equation}
    T^\sigma_R \coloneqq \textnormal{Span}_R\{\nabla_\alpha \coloneqq \partial_\alpha + \xi_\alpha\} \subseteq \fG
\end{equation}
is a subalgebroid isomorphic to the tangent space \(T_R \coloneqq \bigoplus_{\alpha = 1}^N R\partial_\alpha\) of \(B\). Furthermore, it is easy to see that \([T^\sigma_R,\gout^\sigma] \subseteq \gout^\sigma\) holds. In particular, the isomorphism \(T_R \to T_R^\sigma\) defines a connection on \(\gout^\sigma\).

On the other hand, \(\partial_\alpha \mapsto \xi_\alpha\) defines an isomorphism from \(T_R\)  to the double quotient \(\fg(\CO)[\![\lambda]\!] \setminus \fg(\CK)[\![\lambda]\!]/\gout^\sigma\). Therefore, the spaces
\begin{equation}
    \begin{split}
        &\widetilde{\fG}^\sigma_- \coloneqq \textnormal{Span}_R\{\xi_\alpha\}_{\alpha = 1}^N \oplus \gout^\sigma \subseteq \fg(\CK)[\![\lambda]\!],\\ &\fG^\sigma_- \coloneqq T^\sigma_R \ltimes \gout^\sigma \subseteq \fG =  T_R \ltimes \fg(\CK)[\![\lambda]\!]        
    \end{split}
\end{equation}
satisfy
\begin{equation}
    \fg(\CK)[\![\lambda]\!] = \fg(\CO)[\![\lambda]\!] \oplus \widetilde{\fG}_-^\sigma \textnormal{ and } \fG = \fG_+ \oplus \fG_-^\sigma
\end{equation}
respectively, where \(\fG_+ \coloneqq T_R \ltimes \fg(\CO)[\![\lambda]\!]\).

Let us note that \(\fG^\sigma_- \subseteq \fG\) is a subalgebroid, but \(\widetilde{\fG}^\sigma_- \subseteq \fg(\CK)[\![\lambda]\!]\) is not a subalgebra. Furthermore, \(\widetilde{\fG}^\sigma_-\) is simply the image of \(\fG^\sigma_-\) under the canonical projection 
\begin{equation}
    \fG = T_R \ltimes \fg(\CK) \to \fg(\CK).
\end{equation}
The isomorphism of Lie algebroids \(T_R \to T_R^\sigma \subseteq \fG_-^\sigma\) now defines a connection on \(\fG_-^\sigma\).

We want to described the Lie bialgebroid structure associated to the splitting \(\fG = \fG_+ \oplus \fG_-^\sigma\) in the vein of Section \ref{subsec:cotangbialg}.
Let \(\fD \coloneqq \fG \ltimes \fG^\dagger = (T_R \ltimes \fg(\CK)) \ltimes (\fg^*(\CK) \oplus T_R^\dagger)\) be the trivial double of \(\fG\) described in Example \ref{ex:trivial_double}. Furthermore, let \(\fD_+ \coloneqq \fG_+ \ltimes \fg^*(\CO)[\![\lambda]\!] = T_R \ltimes \fd(\CO)[\![\lambda]\!]\) and \(\fD_-^\sigma \coloneqq \fG_-^\sigma \ltimes \fG_-^{\sigma,\bot}\). 
Observe that \(\fD_+ = \fG_+ \oplus \fG_+^\bot\) and \(\fD_-^\sigma = \fG_-^\sigma \oplus \fG_-^{\sigma,\bot} \subseteq \fD\) are subalgebroids, since \([\fG_+, \fG_+^{\bot}] \subseteq \fG_+^{\bot}\) and \([\fG_-^\sigma, \fG_-^{\sigma,\bot}] \subseteq \fG_-^{\sigma,\bot}\) follows immeditaly from Axiom 4.\ in Definition \ref{def:courant_algebroid}. In particular, \(\fD = \fD_+ \oplus \fD_-^\sigma\) is a decomposition into Lagrangian subalgebroids and thus defines a Lie bialgebroid structure on \(\fD_+\).

For sake of completeness, let us describe \(\fD_-^\sigma\) in more detail. Let \(\{d\lambda_\alpha\}_{\alpha = 1}^N \subseteq T_R^\dagger\) and \(\{\omega_\alpha\}_{\alpha = 1}^N \subseteq \gout^{\sigma,\bot} \cap \fg^*(\CO)[\![\lambda]\!]\) be the dual bases of \(\{\partial_\alpha\}_{\alpha = 1}^N \subseteq T_R\) and  \(\{\xi_\alpha\}_{\alpha = 1}^N \subseteq \fg(\CK)\) respectively. Then 
\begin{equation}\label{eq:description_Gsigma-}
    \fG^{\sigma,\bot}_- = \widetilde{\fG}^{\sigma,\bot}_- \oplus \textnormal{Span}_R\{\omega_\alpha - d\lambda_\alpha\}_{\alpha = 1}^N 
\end{equation}
holds. Here, \(\widetilde{\fG}^{\sigma,\bot}_- \subseteq \fg^*(\CK)\) is the orthogonal complement of \(\widetilde{\fG}^{\sigma}_- \subseteq \fg(\CK)\).

Recall that the Lie bialgebroid structure on \(\fD_+\) is completely determined by its Lie algebroid structure, the dual anchor map, and the dual differential \(d^\sigma_\dagger \coloneqq d_{\fD_-^\sigma}\colon \fD_+ \to \fD_+ \wedge \fD_+\). The Lie algebroid and dual anchor structure is immediate from the Lie algebroid structure of \(\fD\). We would therefore like to described \(d_\dagger\).
To do so, we will use a classical dynamical \(r\)-matrix.

Let us consider the unique element
\begin{equation}
    r(t_1,t_2;\lambda) \in \frac{1}{t_1-t_2}\sum_{\alpha = 1}^n I_\alpha \otimes I^\alpha + (\fg \otimes \fg^*)[\![t_1,t_2]\!][\![\lambda]\!]
\end{equation}
such that the Taylor expansion in \(t_2 = 0\) provides an element of \(\widetilde{\fG}^{\sigma}_- \otimes \fg^*[\![t_2]\!][\![\lambda]\!]\). This element satisfies the following version of the spectral dynamical classical Yang-Baxter equation:
\begin{equation}\label{eq:dynamicalCYBE_r}
    \begin{split}
        [r^{(12)}(t_1,t_2;\lambda),& r^{(13)}(t_1,t_3;\lambda)] + [r^{(12)}(t_1,t_2;\lambda),r^{(23)}(t_2,t_3;\lambda)] + [r^{(32)}(t_3,t_2;\lambda),r^{(13)}(t_1,t_3;\lambda)] \\&= \sum_{\alpha = 1}^N \left(\omega_\alpha^{(3)}(t_3;\lambda)\partial_\alpha r^{(12)}(t_1,t_2;\lambda) - \omega_\alpha^{(2)}(t_2;\lambda)\partial_\alpha r^{(13)}(t_1,t_3;\lambda)\right);
    \end{split}
\end{equation}
see \cite{abedin2024r}.
Here, under consideration of \(\fg \subseteq U(\fg)\), the notations \((\cdot)^{(i)},(\cdot)^{(ij)}\) can be understood coefficient-wise as e.g.\ \(a^{(2)} = 1 \otimes a \otimes 1, (a\otimes b)^{(13)} = a \otimes 1 \otimes b \in U(\fd)^{\otimes 3}\) and the Lie brackets and multiplications are understood coefficient-wise in \(U(\fd)^{\otimes 3}\) as well. Let us remark that \(r\) here is \(\overline{r}\) in \cite{abedin2024r}. 

\begin{Prop}\label{prop:dual_differential}
    The skew-symmetrization 
    \begin{equation}\label{eq:def_rho}
        \rho(t_1,t_2;\lambda) \coloneqq r(t_1,t_2;\lambda) - r^{(21)}(t_2,t_1;\lambda) \in \gamma + (\fd \otimes \fd)[\![t_1,t_2]\!][\![\lambda]\!]
    \end{equation}satisfies the spectral classical dynamical Yang-Baxter equation
    \begin{equation}\label{eq:dynamicalCYBE_rho_body}
    \begin{split}
        &[\rho^{(12)}(t_1,t_2;\lambda),\rho^{(13)}(t_1,t_3;\lambda)] + [\rho^{(12)}(t_1,t_2;\lambda),\rho^{(23)}(t_2,t_3;\lambda)] + [\rho^{(32)}(t_3,t_2;\lambda),\rho^{(13)}(t_1,t_3;\lambda)] \\&= \sum_{\alpha = 1}^N \left(\omega^{(1)}_\alpha \partial_\alpha \rho^{(23)}(t_2,t_3;\lambda) - \omega_\alpha^{(2)}(t_2;\lambda)\partial_\alpha \rho^{(13)}(t_1,t_3;\lambda) +\omega_\alpha^{(3)}(t_3;\lambda)\partial_\alpha \rho^{(12)}(t_1,t_2;\lambda)\right).
    \end{split}
    \end{equation}
    Furthermore, the dual differential \(d^\sigma_\dagger\) is given by:
    \begin{equation}\label{eq:differential_via_rho}
    \begin{split}
        d^\sigma_\dagger x =  \left[x(t_1;\lambda) \otimes 1 + 1 \otimes x(t_2;\lambda),\sum_{\alpha = 1}^N \partial_\alpha \wedge \omega_\alpha + \rho(t_1,t_2;\lambda)\right].
    \end{split}
\end{equation}
\end{Prop}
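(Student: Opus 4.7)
\emph{Plan.} I would treat the two assertions separately.

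For the dynamical CYBE on $\rho$, I would substitute $\rho(t_1,t_2;\lambda) = r(t_1,t_2;\lambda) - r^{(21)}(t_2,t_1;\lambda)$ into the three commutators on the left-hand side of \eqref{eq:dynamicalCYBE_rho_body}, expand into twelve terms, and match them against two applications of \eqref{eq:dynamicalCYBE_r}: first with the arguments in the natural order $(t_1,t_2,t_3)$, and second with them permuted so that the asymmetric role played by $t_1$ in \eqref{eq:dynamicalCYBE_r} is moved to another index. The $\omega^{(2)}$- and $\omega^{(3)}$-terms on the right-hand side of \eqref{eq:dynamicalCYBE_rho_body} then come directly from these two applications, while the missing $\omega_\alpha^{(1)} \partial_\alpha \rho^{(23)}$-term appears from the combination. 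Cross commutators not covered by either application (those without a common tensor index) cancel in pairs once one uses that $r(t_1,t_2;\lambda) + r^{(21)}(t_2,t_1;\lambda)$ has a regular singularity structure controlled by the $\fd$-invariance of the Casimir $C$.

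For the formula for $d^\sigma_\dagger$, I would use the characterization
\begin{equation}
    \langle d^\sigma_\dagger x,\phi \otimes \psi\rangle = \psi(\phi)\langle x,\psi\rangle - \psi(\psi)\langle x,\phi\rangle - \langle x,[\phi,\psi]\rangle, \quad x \in \fD_+,\ \phi,\psi \in \fD^\sigma_-,
\end{equation}
provided by the Lie algebroid structure on $\fD^\sigma_-$ and the pairing identifying $\fD^\sigma_- \cong \fD_+^\dagger$, and check that \eqref{eq:differential_via_rho} matches this evaluation on the spanning set $\{\nabla_\alpha\} \cup \gout^\sigma \cup \{\omega_\alpha - d\lambda_\alpha\} \cup \widetilde{\fG}^{\sigma,\bot}_-$ of $\fD^\sigma_-$ from \eqref{eq:description_Gsigma-}. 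The summand $\sum_\alpha \partial_\alpha \wedge \omega_\alpha$ in \eqref{eq:differential_via_rho} accounts exactly for the mixed pairings involving the $T_R^\sigma$-generators $\nabla_\alpha = \partial_\alpha + \xi_\alpha$ together with the covectors $\omega_\alpha - d\lambda_\alpha$, while the $\rho$-summand handles the pairings internal to the $\fd(\CK)[\![\lambda]\!]$-factor. The defining spectral property of $r$ --- that its Taylor expansion in $t_2$ at $0$ lies in $\widetilde{\fG}_-^\sigma \otimes \fg^*[\![t_2]\!][\![\lambda]\!]$ --- is precisely what makes $[x(t_1;\lambda) \otimes 1 + 1 \otimes x(t_2;\lambda),\rho]$ reproduce the bracket on the $\fd(\CK)[\![\lambda]\!]$-part of $\fG_-^\sigma$ upon pairing with $\widetilde{\fG}_-^{\sigma,\bot}$.

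\emph{Main obstacle.} The crux is the first part: the dCYBE \eqref{eq:dynamicalCYBE_r} carries only two dynamical terms on its right, while the target equation for $\rho$ has three, so the third must be produced by a delicate combination of two uses of \eqref{eq:dynamicalCYBE_r} together with the skew-symmetrization identity for $\rho$. Tracking the cross-terms and using the $\fd$-invariance of $C$ to see that the spurious contributions cancel is essentially the analogue, in the dynamical setting, of the skew-symmetrization argument of \cite{abedin2024yangian}, adapted to the spectral dynamical $r$-matrix of \cite{felder_kzb, abedin2024r}. Part (II) is then a straightforward compatibility check once the spanning sets of $\fD^\sigma_-$ and $\fD_+$ are unfolded against the pairing.
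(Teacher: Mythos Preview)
Your plan for part (II) is essentially the paper's: evaluate both sides of \eqref{eq:differential_via_rho} against $w_1 \otimes w_2$ with $w_1,w_2 \in \fD_-^\sigma$, using the spanning set from \eqref{eq:description_Gsigma-} and the defining spectral property of $r$. That is fine.

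For part (I), however, there is a real gap. The left-hand side of \eqref{eq:dynamicalCYBE_rho_body}, after substituting $\rho = r - r^{(21)}$, produces twelve commutator terms; the correct mechanism for reducing these is not ``$\fd$-invariance of $C$'' but the much more elementary fact that $r \in \fg \otimes \fg^*$ together with $[\fg^*,\fg^*]=0$ in $\fd$. This kills exactly three of the twelve terms outright, leaving nine. Those nine group cleanly into \emph{three} triples, each of which is an instance of \eqref{eq:dynamicalCYBE_r} under a permutation of the tensor slots (the identity, and the transpositions $(12)$ and $(13)$ applied to the tensor indices and spectral parameters simultaneously). Each application contributes two $\omega$-terms to the right-hand side, and the six resulting terms assemble precisely into $\sum_\alpha(\omega^{(1)}_\alpha\partial_\alpha\rho^{(23)} - \omega^{(2)}_\alpha\partial_\alpha\rho^{(13)} + \omega^{(3)}_\alpha\partial_\alpha\rho^{(12)})$.

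Two applications of \eqref{eq:dynamicalCYBE_r} cannot suffice: each produces only two $\omega$-terms, so you would have at most four, whereas the target right-hand side unpacks into six (two for each of $\omega^{(1)},\omega^{(2)},\omega^{(3)}$, coming from $\partial_\alpha\rho = \partial_\alpha r - \partial_\alpha r^{(21)}$). There is no mechanism --- certainly not invariance of $C$ --- that manufactures the missing $\omega$-terms out of pure commutators of $r$. Your remark that the $\omega^{(1)}$-term ``appears from the combination'' therefore cannot be made precise. Replace the two-application scheme by three applications, and replace the $C$-invariance step by the observation $[\fg^*,\fg^*]=0$; then the argument goes through exactly as in the paper.
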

\begin{proof}
First of all, using \([\fg^*,\fg^*]=\{0\}\), we have 
{\footnotesize\begin{equation}
    \begin{split}
        &[\rho^{(12)}(t_1,t_2;\lambda),\rho^{(13)}(t_1,t_3;\lambda)] + [\rho^{(12)}(t_1,t_2;\lambda),\rho^{(23)}(t_2,t_3;\lambda)] + [\rho^{(32)}(t_3,t_2;\lambda),\rho^{(13)}(t_1,t_3;\lambda)]
        \\& = [r^{(12)}(t_1,t_2;\lambda),r^{(13)}(t_1,t_3;\lambda)] + [r^{(12)}(t_1,t_2;\lambda),r^{(23)}(t_2,t_3;\lambda)] + [r^{(32)}(t_3,t_2;\lambda),r^{(13)}(t_1,t_3;\lambda)]
        \\& - [r^{(32)}(t_3,t_2;\lambda),r^{(31)}(t_3,t_1;\lambda)] + [r^{(21)}(t_2,t_1;\lambda),r^{(32)}(t_3,t_2;\lambda)] - [r^{(12)}(t_1,t_2;\lambda),r^{(31)}(t_3,t_1;\lambda)]
        \\&- [r^{(21)}(t_2,t_1;\lambda),r^{(23)}(t_3,t_2;\lambda)]  - [r^{(21)}(t_2,t_1;\lambda),r^{(13)}(t_1,t_3;\lambda)]  + [r^{(23)}(t_2,t_3;\lambda),r^{(31)}(t_3,t_1;\lambda)]
        \\&= \sum_{\alpha = 1}^N \left(\omega_\alpha^{(3)}(t_3;\lambda)\partial_\alpha r^{(12)}(t_1,t_2;\lambda) - \omega_\alpha^{(2)}(t_2;\lambda)\partial_\alpha r^{(13)}(t_1,t_3;\lambda)\right)
        \\& - \sum_{\alpha = 1}^N \left(\omega_\alpha^{(3)}(t_1;\lambda)\partial_\alpha r^{(12)}(t_3,t_2;\lambda) - \omega_\alpha^{(2)}(t_2;\lambda)\partial_\alpha r^{(13)}(t_3,t_1;\lambda)\right)^{(13)}
        \\& - \sum_{\alpha = 1}^N \left(\omega_\alpha^{(3)}(t_3;\lambda)\partial_\alpha r^{(12)}(t_2,t_1;\lambda) - \omega_\alpha^{(2)}(t_1;\lambda)\partial_\alpha r^{(13)}(t_3,t_2;\lambda)\right)^{(12)}
        \\& = \sum_{\alpha = 1}^N \left(\omega^{(1)}_\alpha \partial_\alpha \rho^{(23)}(t_2,t_3;\lambda) - \omega_\alpha^{(2)}(t_2;\lambda)\partial_\alpha \rho^{(13)}(t_1,t_3;\lambda) +\omega_\alpha^{(3)}(t_3;\lambda)\partial_\alpha \rho^{(12)}(t_1,t_2;\lambda)\right).
    \end{split}
\end{equation}}
Here, we used \eqref{eq:dynamicalCYBE_r} three times. This proves \eqref{eq:dynamicalCYBE_rho_body}.

Let us turn to the proof of \eqref{eq:differential_via_rho}. For \(x \in \fD_+\) and \(w_1,w_2 \in \fD_-^\sigma\) we have:
\begin{equation}
    \begin{split}
        &\langle d^\sigma_{\dagger} x,w_1 \otimes w_2\rangle = \psi(w_1)w_2(x) -\psi(w_2)w_1(x)-\langle x,[w_1,w_2]\rangle 
        \\& =\psi(w_1)w_2(x) -\psi(w_2)w_1(x)-\langle x,\psi(w_1)w_2\rangle + \langle x,\psi(w_2)w_1\rangle \\&-\langle x,[w_1-\psi(w_1),w_2-\psi(w_2)]\rangle
        \\&=\psi(w_1)w_2(x) -\psi(w_2)w_1(x)-\langle x,\psi(w_1)w_2\rangle + \langle x,\psi(w_2)w_1\rangle
        \\&+\langle [x,w_2-\psi(w_2)],w_1-\psi(w_1)\rangle
        \\&=\langle \left[x(t_1;\lambda) \otimes 1 + 1 \otimes x(t_2;\lambda),\sum_{\alpha = 1}^N \partial_\alpha \wedge \omega_\alpha +  \rho(t_1,t_2;\lambda)\right],w_1 \otimes w_2\rangle.
    \end{split}
\end{equation}
Here, we used that 
\begin{equation}
        \begin{split}
        &\sum_{\alpha = 1}^N\langle \partial_\alpha x \otimes \omega_\alpha + \partial_\alpha \otimes [\omega_\alpha,x],w_1 \otimes w_2\rangle
        \\&=  
        \langle \psi(w_2)x,w_1\rangle +\langle [\pi_{\fg^*(\CO)}(w_1),x],w_2\rangle
        \\&=\psi(w_2)w_1(x) - \langle x,\psi(w_2)w_1\rangle +\langle [\pi_{\fg^*(\CO)}(w_1),x],w_2\rangle,
        \end{split}
\end{equation}
where \(\pi_{\fg^*(\CO)} \colon \fD \to \fg^*(\CO)\) is the canonical projection induced by the description \eqref{eq:description_Gsigma-}, and the fact that
\begin{equation}
    \begin{split}
        &\langle [x(t_1;\lambda) \otimes 1 + 1 \otimes x(t_2;\lambda), \rho(t_1,t_2;\lambda)],w_1 \otimes w_2\rangle \\&= \langle [x,w_2 - \psi(w_2) - \pi_{\fg^*(\CO)}(w_2)],w_1 - \psi(w_1) - \pi_{\fg^*(\CO)}(w_1)\rangle.        
    \end{split}
\end{equation}
holds. The latter identity follows from the fact that \(\rho \in \gamma + (\fd \otimes \fd)[\![t_1,t_2]\!][\![\lambda]\!]\) and that the Taylor expansion of \(\rho\) in \(t_2 = 0\) is an element of \((\wt\fG_-^\sigma \oplus \widetilde{\fG}^{\sigma,\bot}_-)\otimes \fd(\CO)\).
\end{proof}

\subsubsection{Realization by twisting}\label{subsec:trivialdyn}

Recall the Lie bialgebra structure \(\delta\) on \(\fd(\CO)\) from Section \ref{sec:Yangian} defined by 
\begin{equation}
    \delta(x) = [x(t_1) \otimes 1 + 1 \otimes x(t_2),\gamma(t_1,t_2)]
\end{equation}
for Yang's \(r\)-matrix:
\begin{equation}
    \gamma(t_1,t_2) = \frac{1}{t_1-t_2}\sum_{\alpha = 1}^{\dim(\fg)}\left(I^\alpha \otimes I_\alpha + I_\alpha \otimes I^\alpha\right).
\end{equation}
This Lie bialgebra structure is determined by the Manin triple 
\begin{equation}
    \fd(\!(t)\!) = \fd[\![t]\!] \oplus (t^{-1}\fg[t^{-1}] \oplus t^{-1}\fg^*[t^{-1}]).    
\end{equation}
Consider \(\fD_- \coloneqq t^{-1}\fg[t^{-1}][\![\lambda]\!] \oplus t^{-1}\fg^*[t^{-1}][\![\lambda]\!] \oplus T_R^\dagger\). Then \(\fD = \fD_+ \oplus \fD_-\) is a splitting
of the Courant algebroid \(\fD\) into Lagrangian subalgebroids and thus defines the structure of a Lie bialgebroid on \(\fD_+\). This Lie bialgebroid can be viewed as a natural extension of the Lie bialgebra \((\fd(\CO),\delta)\) to the algebroid setting. It is still determined by Yang's \(r\)-matrix \(\gamma\) in the sense that
\begin{equation}
    \begin{split}
        d_\dagger x = [\gamma(t_1,t_2),x(t_1;\lambda)\otimes 1 + 1 \otimes x(t_2;\lambda)],
    \end{split}
\end{equation}
where \(d_\dagger\) is the dual differential of this Lie bialgebroid. Indeed, it is easy to calculate
\begin{equation}
    \begin{split}
        \langle d_\dagger x,w_1 \otimes w_2\rangle &= \underbrace{\psi(w_1)}_{= 0} w_2(x) - \underbrace{\psi(w_2)}_{= 0}w_1(x) - \langle x, [w_1,w_2]\rangle = \langle [x,w_2],w_1\rangle 
        \\&= \langle [x(t_1;\lambda) \otimes 1 + 1 \otimes x(t_2,\lambda),\gamma(t_1,t_2)],w_1 \otimes w_2\rangle.
    \end{split}
\end{equation}
Here, we used that \([w_1,w_2] \in t^{-1}\fd[t^{-1}][\![\lambda]\!]\) and therefore \(d_\dagger \partial_\alpha = 0\).

In the theory of Lie bialgebras, two Lie bialgebra structures \(\delta_1\) and \(\delta_2\) on the same Lie algebra \(\mf b\) are called twisted to each other if their classical double \(D(\mf b,\delta_1)\) and \(D(\mf b,\delta_2)\) are isomorphic in a way that the bilinear form is preserved and \(\mf b\) is mapped to itself; see \cite{drinfeld1989quasi,karolinsky_Stolin}. In this sense, the Lie bialgebroids defined by the Manin triples \(\fD = \fD_+ \oplus \fD_-\) and \(\fD = \fD_+\oplus \fD_-^\sigma\) are dynamically twisted to each other.

\section{Quantization of the Lie bialgebroid}\label{sec:constructHopf}

In this section, we construct the quantization $\Upsilon_\hbar^\sigma (\fd)$ of the Lie bialgebroid structure on $\fD_+=T_B\ltimes \fd (\CO)\lbb\lambda\rbb$ defined in Section \ref{sec:lie_bialgebroid}. We will also define a Hopf subalgebroid $\wt \Upsilon_\hbar^\sigma (\fd)$ generated by $\fd (\CO)$ and $R$. The essence of this construction follows from the idea of our previous work \cite{abedin2024yangian}, as well as the foundations on formal groupoids and Lie algebroids that we discussed in Section \ref{sec:generalities}. However, it is less straightforward compared to the usual Lie algebra case, as one needs to always keep track of the source and target maps. 

\subsection{Defining the algebra}

The starting point of this construction is the decomposition
\be
\mf G=\mf G_+\oplus \mf G_-^\sigma
\ee
from Section \ref{sec:lie_bialgebroid}.
Using this decomposition, the multiplication map gives an isomorphism
\be
U_R(\mathfrak G)=U_R(\mf G_-^\sigma)\rltensor_R U_R(\mathfrak G_+)
\ee
of cocommutative coalgebras over $R$. As discussed in Section \ref{subsubsec:LieU}, this induces a left action $\rhd$ of $U_R(\mathfrak G_+)$ on $U_R(\mf G_-^\sigma)$ and a right action $\lhd$ of $U_R(\mf G_-^\sigma)$ on $U_R(\mathfrak G_+)$, satisfying the compatibility conditions from Proposition \ref{prop:matchedULie}. 

Note that as a module over $R$, $\mf G_-^\sigma$ is canonically isomorphic to $\fg(\CK)[\![\lambda]\!]/\fg(\CO)\lbb\lambda\rbb$, and therefore we can identify the dual of $\mf G_-^\sigma$ with $\mf G_-^{\sigma,\dagger}=\fg^*(\CO)\lbb\lambda\rbb$. Consequently, we find an identification \(U_R(\fG_-^\sigma)^\dagger \cong \hat S_R(\fG_-^{\sigma,\dagger}) = \hat S_R(\fg^*(\CO)[\![\lambda]\!])\) of commutative \(R\)-algebras. In the \(\hbar\)-adic language, under consideration of \(\hat S_R(\hbar \fG_-^{\sigma,\dagger})[\![\hbar]\!] = S_R(\hbar \fG_-^{\sigma,\dagger})[\![\hbar]\!]\), we get the following identification of commutative \(R[\![\hbar]\!]\)-algebras:
\be
U_R(\fG_-^\sigma)^\dagger[\![\hbar]\!] = S_R (\hbar \mf G_-^{\sigma,\dagger})[\![\hbar]\!]=S_R(\hbar\fg^*(\CO)\lbb\lambda\rbb)[\![\hbar]\!]. 
\ee
Here, we recall that the canonical pairing is extended by \(\C[\![\hbar]\!]\)-bilinearly from \(\langle \hbar f, x \rangle = f(x)\) for \(f \in \fG^\dagger\) and \(x \in \fG\). 

Let $\psi$ be the anchor map for $U_R(\mathfrak G)$, which we recall that on primitive elements it is simply given by the projection:
\be
\psi\colon  T_R\ltimes \gkl\longrightarrow T_R.
\ee
Note that for any $x\in  \mf G_+$, the dual action \((x,f) \mapsto f(x\rhd- )\)
is no longer an element that is $R$-linear on the left, and therefore does not give a well-defined action of \(U_R(\fG_+)\) on $S_R (\mf G_-^{\sigma,\dagger})[\![\hbar]\!]$. Instead, we have to define the following:
\be\label{eq:dual_action_to_rhd}
\langle x\circ f, a\rangle \coloneqq \psi(x)\langle f,a\rangle-\langle f, x\rhd a\rangle \,,\qquad f \in S_R (\mf G_-^{\sigma,\dagger})[\![\hbar]\!], x \in \fG_+, a \in U(\fG_-^\sigma)[\![\hbar]\!].
\ee

\begin{Lem}
    Equation \eqref{eq:dual_action_to_rhd} gives a well-defined action of $\mathfrak G_+$ on $S_R (\mf G_-^{\sigma,\dagger})[\![\hbar]\!]$ by algebra derivations. This action extends to an action of $U_R(\mathfrak G_+)[\![\hbar]\!]$ on $S_R (\mf G_-^{\sigma,\dagger})[\![\hbar]\!]$.
    
\end{Lem}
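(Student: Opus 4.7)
My plan is to verify three things in sequence: (a) that $x\circ f$ really is an element of $\hat S_R(\fG_-^{\sigma,\dagger})[\![\hbar]\!]$, (b) that $x\circ$ is an algebra derivation, and (c) that the assignment $x \mapsto x\circ$ satisfies the universal property of $U_R(\fG_+)[\![\hbar]\!]$. All three verifications are pairing computations, the key input being that $\rhd$ is compatible with the $R$-module structure and with the cocommutative coproduct of $U_R(\fG_-^\sigma)$.

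\textbf{Well-definedness (a).} Under the PBW identification, $\hat S_R(\fG_-^{\sigma,\dagger}) = \Hom_R(U_R(\fG_-^\sigma), R)$ with left $R$-linearity on $U_R(\fG_-^\sigma)$, so I need to check $\langle x\circ f, ra\rangle = r\langle x\circ f, a\rangle$ for $r \in R$, $a \in U_R(\fG_-^\sigma)$, $x \in \fG_+$. The only nonobvious input is the Leibniz-type formula
\begin{equation}
    x\rhd(ra) = \psi(x)(r)\,a + r\,(x\rhd a),
\end{equation}
which follows from $x(ra) = \psi(x)(r)a + rxa$ in $U_R(\fG)$ and the $R$-linearity of the projection $U_R(\fG)\to U_R(\fG_-^\sigma)$ along the decomposition of Section~\ref{subsubsec:LieU}. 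With this formula, the $\psi(x)(r)$ terms in $\psi(x)\langle f, ra\rangle$ and in $\langle f, x\rhd(ra)\rangle$ cancel exactly, delivering the required $R$-linearity.

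\textbf{Derivation property (b).} For $f,g \in \hat S_R(\fG_-^{\sigma,\dagger})$, I pair $x\circ(fg)$ with an arbitrary $a \in U_R(\fG_-^\sigma)$, using $\langle fg, a\rangle = \langle f\otimes g, \Delta a\rangle$ for the cocommutative coproduct of $U_R(\fG_-^\sigma)$. Then I use that the action $\rhd$ is a coalgebra map (Section~\ref{subsubsec:LieU}) and that $x$ is primitive in $U_R(\fG_+)$, which yields
\begin{equation}
    \Delta(x\rhd a) = \sum (x\rhd a_{(1)})\otimes a_{(2)} + a_{(1)}\otimes (x\rhd a_{(2)}),
\end{equation}
and a parallel Leibniz rule for $\psi(x)\langle fg, a\rangle = \psi(x)\sum \langle f,a_{(1)}\rangle\langle g,a_{(2)}\rangle$. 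Combining these reassembles $\langle (x\circ f)g + f(x\circ g), a\rangle$. Proposition~\ref{prop:coproduct_symmetric_algebra_explicit}.1 allows me to conclude equality in $\hat S_R(\fG_-^{\sigma,\dagger})[\![\hbar]\!]$ from the pairing identity.

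\textbf{Extension to $U_R(\fG_+)[\![\hbar]\!]$ (c).} By the universal property of $U_R(\fG_+)$, I must check (i) $[x\circ, y\circ] = [x,y]\circ$, (ii) $(rx)\circ = r\cdot(x\circ)$, and (iii) the commutation $[x\circ, r] = \psi(x)(r)$ where $r$ acts as multiplication in $\hat S_R(\fG_-^{\sigma,\dagger})$. Part (i) is a direct pairing computation that reduces, after the $[\psi(x),\psi(y)] = \psi([x,y])$ and $[x,y]\rhd a = x\rhd(y\rhd a) - y\rhd(x\rhd a)$ identities, to the definition of $[x,y]\circ$. Part (ii) is immediate since $\psi$ is $R$-linear and $\rhd$ is so in the first argument. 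Part (iii) falls out of the same computation as (a): the defect $\langle x\circ(rf) - r(x\circ f), a\rangle$ equals $\psi(x)(r)\langle f, a\rangle$. Together, (i)--(iii) provide a Lie algebroid representation of $\fG_+$ on $S_R(\fG_-^{\sigma,\dagger})$, which extends uniquely to an action of $U_R(\fG_+)$, and then $\hbar$-adically to $U_R(\fG_+)[\![\hbar]\!]$.

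The main conceptual subtlety, which I expect to need the most care, is the bookkeeping of the two distinct $R$-actions on $\hat S_R(\fG_-^{\sigma,\dagger})$ appearing in Lemma~\ref{lem:source_target_formal_groupoid}: the natural algebra multiplication (which is what appears on the right side of $(rx)\circ f = r(x\circ f)$) versus the dual action used in the pairing. Keeping the convention $\langle rf, a\rangle = r\langle f, a\rangle$ throughout, rather than the twisted target-map convention, makes the bookkeeping uniform and the Leibniz rule for $\rhd$ does the rest.
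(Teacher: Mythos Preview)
Your proof is correct and follows essentially the same path as the paper's: part (a) and the Lie-bracket and commutation checks in (c) match the paper's computations almost line by line. The one genuine difference is in (b): the paper verifies the derivation property by pairing only against group-like elements $e^{\hbar y}$ (where $\Delta(e^{\hbar y})=e^{\hbar y}\otimes e^{\hbar y}$ makes the computation trivial) and then invokes Proposition~\ref{prop:coproduct_symmetric_algebra_explicit}.1, whereas you pair against arbitrary $a$ and use directly that $\rhd$ is a coalgebra map together with the primitivity of $x$. Your route is slightly more direct and makes the citation of Proposition~\ref{prop:coproduct_symmetric_algebra_explicit}.1 unnecessary (once you have equality on all $a$ there is nothing left to prove); the paper's route avoids having to track the $R$-linearity conventions in the Sweedler-sum pairing. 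Both arguments are sound.
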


\begin{proof}
For any \(x \in \fG_+\), \(f \in S_R (\mf G_-^{\sigma,\dagger})[\![\hbar]\!]\), and \(a \in U_R(\fG_-^\sigma)[\![\hbar]\!]\), the calculation
\be
\begin{split}
    \langle x\circ f,ra\rangle&=\psi(x)(r\langle f,a\rangle )-\langle f,x\rhd (r a)\rangle 
    \\&=r\langle x\circ f,a\rangle +\psi(x)(r) \langle f,a\rangle -\langle f,\psi(x)(r) a\rangle =r\langle x\circ f,a\rangle .
\end{split}
\ee
shows that \(x\circ f\) is \(R\)-linear on the left and therefore the action is well-defined.

Furthermore, for another \(y \in \fG_+\), we see that $\circ$ satisfies
   \be
        \begin{split}
            &\langle x\circ (y\circ f)-y\circ( x\circ f),a\rangle \\&= \psi(x)\psi(y)\langle f,a\rangle - \psi(x)\langle f,y\rhd a\rangle - \psi(y) \langle f,x \rhd a\rangle + \langle f, y \rhd (x \rhd a)\rangle 
            \\&- \psi(y)\psi(x)\langle f,a\rangle + \psi(y)\langle f,x\rhd a\rangle + \psi(x)\langle f, y\rhd a\rangle - \langle f, x \rhd (y \rhd a)\rangle
            \\& = \psi([x,y])\langle f,a\rangle - \langle f,[x,y]\rhd a\rangle = \langle [x, y]\circ f,a\rangle,
        \end{split}
   \ee
so \(x \circ (y \circ f) - y \circ (x \circ f) = [x,y]\circ f\). In order to see that \(\circ\) provides algebra derivations, consider \(y \in \fG_-^\sigma\) and calculate:
\be
    \begin{split}
        &\langle x\circ (fg),e^{\hbar y}\rangle = \psi(x)\langle fg,e^{\hbar y}\rangle -\langle fg,x \rhd e^{\hbar y}\rangle \\&= \psi(x)\langle f \otimes g, \Delta (e^{\hbar y})\rangle-\langle f \otimes g, \Delta (x \rhd e^{\hbar y})\rangle \\&= \psi(x)(\langle f,e^{\hbar y}\rangle\langle g,e^{\hbar y}\rangle) - \langle f \otimes g, \Delta(x) \rhd \Delta(e^{\hbar y})\rangle
        \\&=\psi(x)(\langle f,e^{\hbar y}\rangle\langle g,e^{\hbar y}\rangle) - \langle f \otimes g,(x \rhd e^{\hbar y}) \otimes e^{\hbar y} + e^{\hbar y} \otimes (x \rhd e^{\hbar y})\rangle
        \\&=\langle g,e^{\hbar y}\rangle \psi(x)\langle f,e^{\hbar y}\rangle+ \langle f,e^{\hbar y}\rangle\psi(x)\langle g,e^{\hbar y}\rangle - \langle f,e^{\hbar y}\rangle\langle g,x \rhd e^{\hbar y} \rangle - \langle g,e^{\hbar y}\rangle\langle f, x \rhd e^{\hbar y}\rangle 
        \\&= \langle (x \circ f)g + f(x \circ g),e^{\hbar y}\rangle  
    \end{split}
\ee
Here we used that \(e^{\hbar y}\) is group-like, i.e.\ \(\Delta(e^{\hbar y}) = e^{\hbar y} \otimes e^{\hbar y}\).

The action \(\circ\) extends to an action of $U_R(\mathfrak G_+)[\![\hbar]\!]$ by the universal property of the universal envelope if additionally the action of $[x, s(r)]$ and $s(\psi(x)r)$ on $U_R(\mathfrak G_+)[\![\hbar]\!]$ coincide for every \(r \in R\). But this follows from the fact that
\be
    \begin{split}
        \langle x\circ (fr), a\rangle - \langle (x\circ f)r,a\rangle &=\psi(x) (\langle f, a\rangle r) -\langle f,x\rhd a \rangle r-\psi(x)(\langle f,a\rangle )r+\langle f,x\rhd a\rangle r\\&=\langle f,a\rangle \psi(x)(r)       
    \end{split}
\ee
holds.
\end{proof}

\begin{Rem}
    Note that
\be
\langle x\circ (rf), a\rangle -\langle r(x\circ f), a\rangle =-\langle f,(x\rhd a)r\rangle +\langle f,x\rhd (ar)\rangle =\langle f, a^{(1)}\psi (x\lhd a^{(2)})(r)\rangle ,
\ee
and therefore the action of $[x, t(r)]$ and $ t(\psi (x)r)$ do not coincide.
\end{Rem}

We would like to now define the smash-product of $U_R(\mathfrak G_+)[\![\hbar]\!]$ with $S_R (\mf G_-^{\sigma,\dagger})[\![\hbar]\!]$ over \(R[\![\hbar]\!]\). In other words, we would like to define an algebra structure on
\be
S_R (\mf G_-^{\sigma,\dagger})[\![\hbar]\!]\,\, \rltensor_{R[\![\hbar]\!]} U_R(\mathfrak G_+)[\![\hbar]\!]
\ee
where the right action of $R[\![\hbar]\!]$ on $S_R (\mf G_-^{\sigma,\dagger})[\![\hbar]\!]$ is by \(\C[\![\hbar]\!]\)-linear extension of the source map $s$. It is essentially defined by the relation $(1\otimes f) (x\otimes 1)=x\otimes f-1\otimes (x\circ f)$ for every \(f \in S_R (\mf G_-^{\sigma,\dagger})[\![\hbar]\!]\) and \(x \in \fG_+\). In particular, this relation already completely determines a Lie algebroid structure over $S_R (\mf G_-^{\sigma,\dagger})[\![\hbar]\!]$ on $ \fG_{+,\textnormal{ext}} \coloneqq S_R (\mf G_-^{\sigma,\dagger})[\![\hbar]\!]\, {}^s\!\!\otimes_{R[\![\hbar]\!]} \mathfrak G_+[\![\hbar]\!]$, where the anchor map is simply \(1 \otimes \psi\). We then define our algebra as the universal envelope 
\be
\Upsilon_\hbar^\sigma(\fd)\coloneqq U_{S_R (\mf G_-^{\sigma,\dagger})[\![\hbar]\!]}(\mathfrak G_{+, \textnormal{ext}}).
\ee
of \(\fG_{+,\textnormal{ext}}\) over \(S_R (\mf G_-^{\sigma,\dagger})[\![\hbar]\!]\).

\begin{Lem}
    There is an isomorphism of $S_R (\mf G_-^{\sigma,\dagger})[\![\hbar]\!]$-modules:
    \be
\Upsilon_\hbar^\sigma(\fd)\cong S_R (\mf G_-^{\sigma,\dagger})[\![\hbar]\!]\,{}^s\!\!\otimes_R U_R(\mathfrak G_+). 
    \ee
\end{Lem}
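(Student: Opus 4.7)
The plan is to apply the PBW theorem (Lemma \ref{Lem:PBW}) to the Lie algebroid $\fG_{+,\textnormal{ext}}$ over the base ring $\CR \coloneqq S_R(\mf G_-^{\sigma,\dagger})[\![\hbar]\!]$ and then simplify the resulting symmetric algebra using the compatibility of symmetric algebras with base change.

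First I would verify the hypotheses of the PBW theorem. Since $\fG_+ = T_R \ltimes \fg(\CO)[\![\lambda]\!]$ is free as an $R$-module (both $T_R$ and $\fg(\CO)[\![\lambda]\!]$ have explicit $R$-bases), the extension $\fG_{+,\textnormal{ext}} = \CR \,{}^s\!\!\otimes_{R[\![\hbar]\!]} \fG_+[\![\hbar]\!]$ obtained by the source-action base change is automatically free as an $\CR$-module, with a basis inherited from any $R$-basis of $\fG_+$. By the construction preceding the statement, $\fG_{+,\textnormal{ext}}$ carries a Lie algebroid structure over $\CR$ whose anchor is the $\CR$-linear extension of $1 \otimes \psi$.

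Next I would invoke Lemma \ref{Lem:PBW} for the universal envelope $\Upsilon_\hbar^\sigma(\fd) = U_\CR(\fG_{+,\textnormal{ext}})$. This produces an isomorphism of filtered $\CR$-modules (using e.g.\ the symmetrization map, which is well-defined in characteristic zero)
\begin{equation}
  U_\CR(\fG_{+,\textnormal{ext}}) \;\cong\; S_\CR(\fG_{+,\textnormal{ext}}).
\end{equation}
Symmetric algebra is compatible with base change along $R[\![\hbar]\!] \to \CR$, so
\begin{equation}
  S_\CR(\fG_{+,\textnormal{ext}}) \;=\; S_\CR\bigl(\CR \,{}^s\!\!\otimes_{R[\![\hbar]\!]} \fG_+[\![\hbar]\!]\bigr) \;\cong\; \CR \,{}^s\!\!\otimes_{R[\![\hbar]\!]} S_{R[\![\hbar]\!]}\!\bigl(\fG_+[\![\hbar]\!]\bigr) \;\cong\; \CR \,{}^s\!\!\otimes_{R[\![\hbar]\!]} S_R(\fG_+)[\![\hbar]\!].
\end{equation}
A second application of the PBW theorem, this time to $\fG_+$ over $R$, gives $S_R(\fG_+) \cong U_R(\fG_+)$ as $R$-modules. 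Extending $\hbar$-adically and observing that the tensor product $\CR \,{}^s\!\!\otimes_{R[\![\hbar]\!]} (-)[\![\hbar]\!]$ agrees with $\CR \,{}^s\!\!\otimes_R (-)$ (since the $\hbar$-adic topology is already built into $\CR$) finally yields the desired isomorphism
\begin{equation}
  \Upsilon_\hbar^\sigma(\fd) \;\cong\; \CR \,{}^s\!\!\otimes_R U_R(\fG_+).
\end{equation}

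The main subtlety I expect lies less in the algebra of PBW and more in bookkeeping: the tensor product on the right uses the source-action of $R$ on $\CR$, the non-trivial commutation relations between $\fG_+$ and $S_R(\mf G_-^{\sigma,\dagger})[\![\hbar]\!]$ are concealed in the algebra structure (not the module structure), and one must keep the topological completions with respect to both the symmetric degree in $\fg^*(\CO)[\![\lambda]\!]$ and the $\hbar$-adic filtration consistent throughout. Once one fixes a homogeneous $R$-basis of $\fG_+$ and uses the PBW monomial basis, the isomorphism is entirely explicit and these topological issues dissolve.
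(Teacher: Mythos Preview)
Your proposal is correct and follows essentially the same route as the paper: apply PBW to $\fG_{+,\textnormal{ext}}$ over $\CR$, use base change for symmetric algebras, then PBW for $\fG_+$ over $R$. The only cosmetic difference is that the paper names the multiplication map $\CR\,{}^s\!\!\otimes_R U_R(\fG_+)\to\Upsilon_\hbar^\sigma(\fd)$ as the isomorphism and checks bijectivity by comparing associated gradeds, whereas you invoke the symmetrization isomorphism directly; both arguments are equivalent at the level of establishing the module isomorphism.
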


\begin{proof}
  The isomorphism is simply given by multiplication. That multiplication indeed provides an isomorphism follows by considering the associated graded object
    \be
    \begin{split}
        &\mathrm{Gr}\Upsilon_\hbar^\sigma(\fd)\cong S_{S_R (\mf G_-^{\sigma,\dagger})[\![\hbar]\!]}(\mathfrak G_{+, \textnormal{ext}})\cong S_R (\mf G_-^{\sigma,\dagger})[\![\hbar]\!] \,{}^s\!\!\otimes_R S_R(\mathfrak G_+)\\&\cong \mathrm{Gr}\left(S_R (\mf G_-^{\sigma,\dagger})[\![\hbar]\!]\,{}^s\!\!\otimes_{R[\![\hbar]\!]} U_R(\mathfrak G_+)[\![\hbar]\!]\right),        
    \end{split}
    \ee
    where the respective standard filtration were considered.
\end{proof}

Let us observe that on the other hand, the action of $T_R$ on $S_R (\mf G_-^{\sigma,\dagger})[\![\hbar]\!]$ is simply given by derivation action on $R$. We find, therefore, a canonical isomorphism of algebras:
\be
\Upsilon_\hbar^\sigma(\fd)\cong D_R\otimes Y_\hbar(\fd)[\![\lambda]\!]\,, \qquad D_R=U_R (T_R). 
\ee

\subsection{Defining the Hopf algebroid structure}

We will divide this construction into multiple steps. 

\subsubsection{The source and target maps}

First step, we need to construct the source and target maps. Note that in Section \ref{subsubsec:formalgroupoid}, we showed that $S_R (\mf G_-^{\sigma,\dagger})[\![\hbar]\!]$ admits two embeddings of $R[\![\hbar]\!]$, denoted by $s, t\colon R[\![\hbar]\!]\to S_R (\mf G_-^{\sigma,\dagger})[\![\hbar]\!]$, which are given by:
\be
\langle s(r)f,a\rangle =\langle f,a\rangle r\textnormal{ and } \langle t(r)f ,a\rangle =\langle f,ar\rangle \,,\qquad f \in S_R (\mf G_-^{\sigma,\dagger})[\![\hbar]\!], r \in R[\![\hbar]\!],a\in U(\fG_-^\sigma)[\![\hbar]\!]. 
\ee
Furthermore, explicit formulas for \(s\) and \(t\) were given in Proposition \ref{prop:coproduct_symmetric_algebra_explicit}.2.
By definition, $\Upsilon_\hbar^\sigma (\fd)$ contains $S_R (\mf G_-^{\sigma,\dagger})[\![\hbar]\!]$ as a subalgebra, therefore $s, t\colon R[\![\hbar]\!] \to \Upsilon_\hbar^\sigma (\fd)$ is defined to be the inclusion $s, t \colon R[\![\hbar]\!]\to S_R (\mf G_-^{\sigma,\dagger})[\![\hbar]\!]\to \Upsilon_\hbar^\sigma (\fd)$. Note that $s$ satisfies
\be
[x, s(r)]=s(\psi (x)r)\,,\qquad \forall x\in \mathfrak G_+, r\in R. 
\ee

\subsubsection{The coproduct}

The second step is to define a coproduct
\be
\Delta_{\hbar}^\sigma\colon \Upsilon_\hbar^\sigma (\fd)\to \Upsilon_\hbar^\sigma (\fd)\sttensor_{R[\![\hbar]\!]} \Upsilon_\hbar^\sigma (\fd),
\ee
that satisfies coassociativity,
\be
\Delta_{\hbar}^\sigma(a)(s(r)-t(r))=0, \qquad \forall a\in \Upsilon_\hbar^\sigma (\fd), r\in R[\![\hbar]\!],
\ee
and $\Delta_{\hbar}^\sigma(ab)=\Delta_{\hbar}^\sigma(a)\Delta_{\hbar}^\sigma(b)$. 

Note that on the subalgebra $S_R (\mf G_-^{\sigma,\dagger})[\![\hbar]\!]$, this is basically done in Section \ref{subsubsec:formalgroupoid}: we have a coassociative map of commutative algebras:
\be
\Delta_{\hbar}^\sigma\colon  S_R (\mf G_-^{\sigma,\dagger})[\![\hbar]\!]\to S_R (\mf G_-^{\sigma,\dagger})[\![\hbar]\!] \,\sttensor_{R[\![\hbar]\!]} S_R (\mf G_-^{\sigma,\dagger})[\![\hbar]\!].
\ee
Therefore, it remains to define \(\Delta_\hbar^\sigma\) on \(U(\fG_+)[\![\hbar]\!]\) in a compatible way. 

Recall from Section \ref{sec:lie_bialgebroid} that there is a unique element
\begin{equation}
    r(t_1,t_2;\lambda) \in \frac{1}{t_1-t_2}\sum_{\alpha = 1}^n I_\alpha \otimes I^\alpha + (\fg \otimes \fg^*)[\![t_1,t_2]\!][\![\lambda]\!]
\end{equation}
whose Taylor expansion in \(t_2 = 0\) provides an element of \(\widetilde{\fG}^{\sigma}_- \otimes \fg^*(\CO)\), where \(\widetilde{\fG}_-^\sigma\) is the image of \(\fG_-^\sigma\) under the canonical projection \(\fG = T_R \ltimes \fg(\CK) \to \fg(\CK)\). This element is a dynamical \(r\)-matrix in the sense that it satisfies \eqref{eq:dynamicalCYBE_r}.

We now want to utilize this \(r\)-matrix to construct \(\Delta_\hbar\) on \(U(\fG_+)[\![\hbar]\!]\) in a similar way as it was done in \cite{abedin2024yangian}. To do so, we observe that for any tensor
\begin{equation}
    \mathcal{T}= \sum_{i \in I} \mathcal{T}_i^{(1)} \otimes \mathcal{T}_i^{(2)} \in U_R(\fG)[\![\hbar]\!] {}^s\!\!\otimes_{R[\![\hbar]\!]}^s S_R(\fG^\dagger)[\![\hbar]\!].
\end{equation}
we can associate a \(\C[\![\hbar]\!]\)-linear endomorphism of \(U(\fG)[\![\hbar]\!]\) via
\begin{equation}\label{eq:evaluating_sstensor}
    x \longmapsto  \mathcal{T}(x)\coloneqq \sum_{i \in I} \langle\mathcal{T}_i^{(2)},x\rangle \mathcal{T}_i^{(1)} \,,\qquad x\in U_R(\fG)[\![\hbar]\!].
\end{equation}

\begin{Prop}\label{Prop:rembedTheta}
    The following results are true:
    \begin{enumerate}
        \item The exponential 
    \begin{equation}
        E \coloneqq \textnormal{exp}\left(\hbar r\right) \in U_R(\wt\fG_-^\sigma)[\![\hbar]\!] {}^s\!\!\otimes_{R[\![\hbar]\!]}^s S_R (\mf G_-^{\sigma,\dagger})[\![\hbar]\!].
    \end{equation}
    represents the unique \(R[\![\hbar]\!]\)-linear map \(U(\fG)[\![\hbar]\!] \to U(\fG)[\![\hbar]\!]\) such that \(e^{\hbar x} \mapsto e^{\hbar \pi(x)}\) for \(x \in \fG\), where \(\pi \colon \fG \to \fG\) is the projection onto \(\widetilde{\fG}^\sigma_-\).
    
    \item The exponential
    \begin{equation}
        \Theta \coloneqq \exp\left(\hbar \sum_{\alpha = 1}^N \pd_\alpha \otimes \omega_\alpha\right) \in D_R[\![\hbar]\!] \,{}^s\!\!\otimes_{R[\![\hbar]\!]}^s S_R (\mf G_-^{\sigma,\dagger})[\![\hbar]\!].
    \end{equation}
    represents the unique \(R[\![\hbar]\!]\)-linear map \(U(\fG)[\![\hbar]\!] \to U(\fG)[\![\hbar]\!]\) such that \(e^{\hbar x} \mapsto e^{\hbar \phi(x) }\) for \(x \in \fG\), where \(\phi \colon \fG \to \fG\) is \(R[\![\hbar]\!]\)-linear map defined by \(\xi_\alpha \to \pd_\alpha\) and \(\phi(\fG_+ \oplus \gout^\sigma) = \{0\}\).
    
    \item The product
    \be
        \CE \coloneqq E \Theta \in U_R(\mathfrak G)[\![\hbar]\!]\,{}^s\!\!\otimes_{R[\![\hbar]\!]}^s S_R (\mf G_-^{\sigma,\dagger})[\![\hbar]\!]\subseteq U_R(\mathfrak G)[\![\hbar]\!]\,{}^s\!\!\otimes_{R[\![\hbar]\!]}^s \Upsilon_\hbar^\sigma (\fd).
    \ee
    represents the extension of the embedding \(U(\fG_-^\sigma)[\![\hbar]\!]\to U(\fG)[\![\hbar]\!]\) by 1.
    \end{enumerate}
\end{Prop}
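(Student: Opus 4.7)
The plan is to verify each of the three identities by evaluating the given tensor, via the convention of equation~\eqref{eq:evaluating_sstensor}, on the grouplike elements $e^{\hbar x}$ for $x \in \fG$. Since these elements together with $R[\![\hbar]\!]$ topologically generate $U(\fG)[\![\hbar]\!]$ (Lemma~\ref{Lem:groupoid_like_elements} and Proposition~\ref{prop:coproduct_symmetric_algebra_explicit}.1), any $R[\![\hbar]\!]$-linear endomorphism is uniquely determined by its values there. The common computational tool is the following: expanding the exponentials and applying the grouplike identity $\Delta(e^{\hbar x}) = e^{\hbar x}\otimes e^{\hbar x}$ factorises the pairing over the commutative algebra $S_R(\fG_-^{\sigma,\dagger})$, so that evaluating $\exp(\hbar\mathsf{Z})$ on $e^{\hbar x}$ reduces to exponentiating $\mathsf{Z}(e^{\hbar x})\in U(\fG)[\![\hbar]\!]$.

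For Part 1, the first tensor factors of $r$ lie in $\wt\fG_-^\sigma \subseteq \fg(\CK)$, which commutes with $R$, so the scalars obtained from the evaluation commute with the $U(\wt\fG_-^\sigma)$-part and the reduction above gives $E(e^{\hbar x}) = \exp(\hbar\, r(e^{\hbar x}))$. The key structural fact is that $r$, by its defining characterisation in Section~\ref{sec:lie_bialgebroid}, is the kernel of the $R$-projection $\fg(\CK)\to\wt\fG_-^\sigma$ along $\fg(\CO)$ under the residue pairing; combined with the vanishing of $\fg^*(\CO)\subseteq\fG^\dagger$ on $T_R$, this gives $r(e^{\hbar x}) = \pi(x)$ and hence $E(e^{\hbar x}) = e^{\hbar\pi(x)}$.

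For Part 2, the same strategy applied to $Z = \sum_\alpha\partial_\alpha\otimes\omega_\alpha$ uses the duality $\omega_\alpha(\xi_\beta) = \delta_{\alpha\beta}$ and $\omega_\alpha|_{\fG_+\oplus\gout^\sigma} = 0$ to identify $Z(e^{\hbar x}) = \sum_\alpha \omega_\alpha(x)\partial_\alpha = \phi(x)$, so that $\Theta(e^{\hbar x}) = e^{\hbar\phi(x)}$. The subtlety here, compared to Part 1, is that $\partial_\alpha$ does not commute with $R\subseteq D_R$; matching the tensor-algebra expansion of $\Theta$ with the $U(\fG)$-exponential of $\phi(x)$ relies on the source-source identifications in $D_R[\![\hbar]\!]\sstensor_{R[\![\hbar]\!]} S_R(\fG_-^{\sigma,\dagger})[\![\hbar]\!]$, which precisely absorb the commutators $[\partial_\alpha,r] = \partial_\alpha(r)$ arising when $\phi(x)^n$ is reordered in $U(\fG)$.

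For Part 3, multiplicativity of the evaluation on grouplike elements (a consequence of the same grouplike identity) gives $\CE(e^{\hbar x}) = E(e^{\hbar x})\,\Theta(e^{\hbar x}) = e^{\hbar\pi(x)}\,e^{\hbar\phi(x)}$. For $x \in \fG_-^\sigma$, the splitting $\fG_-^\sigma = T_R^\sigma\oplus\gout^\sigma$ together with the definitions of $\pi,\phi$ yields $\pi(x)+\phi(x) = x$, and one needs the product of exponentials to reassemble into $e^{\hbar x}$ inside the subalgebra $U(\fG_-^\sigma)\subseteq U(\fG)$, thereby realising the embedding. This is the step I expect to be the main obstacle, since $[\pi(x),\phi(x)]$ is not zero in general and the Baker--Campbell--Hausdorff corrections must be controlled using the Maurer--Cartan identity $\partial_\alpha\xi_\beta - \partial_\beta\xi_\alpha + [\xi_\alpha,\xi_\beta] = 0$ from Section~\ref{sec:lie_bialgebroid}, together with the source-target bookkeeping that distinguishes the $\sstensor$-product from the naive coordinate-wise product.
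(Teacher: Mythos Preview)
For Parts 1 and 2 your plan matches the paper's approach, which simply refers to the analogous lemma in the predecessor work: evaluate on grouplikes and use the multiplicativity \(\langle f_1f_2,e^{\hbar x}\rangle = \langle f_1,e^{\hbar x}\rangle\langle f_2,e^{\hbar x}\rangle\). The subtlety you flag in Part~2 is genuine, but your proposed resolution via ``source-source identifications absorbing the commutators \([\partial_\alpha,r]\)'' is not a proof as it stands: those identifications live in the tensor product \emph{before} evaluation, whereas the discrepancy between \(\sum_{\vec\alpha}(\prod_i r_{\alpha_i})\partial_{\alpha_1}\cdots\partial_{\alpha_k}\) and \((\sum_\alpha r_\alpha\partial_\alpha)^k\) appears in \(U_R(\fG)\) \emph{after} evaluation.

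The substantive gap is in Part~3. You compute \(\CE(e^{\hbar x}) = e^{\hbar\pi(x)}e^{\hbar\phi(x)}\) and observe \(\pi(x)+\phi(x)=x\) for \(x\in\fG_-^\sigma\); you then need \(e^{\hbar\pi(x)}e^{\hbar\phi(x)} = e^{\hbar x}\), and propose to control the Baker--Campbell--Hausdorff corrections by the Maurer--Cartan identity. This does not work. Writing \(x = \sum_\alpha c_\alpha\nabla_\alpha + g\) with \(g\in\gout^\sigma\), the leading BCH correction \(\tfrac{1}{2}[\pi(x),\phi(x)]\) contains \(-\tfrac{1}{2}\sum_{\alpha,\beta}c_\alpha c_\beta\,\partial_\alpha\xi_\beta\) and \(-\tfrac{1}{2}\sum_\alpha c_\alpha\,\partial_\alpha g\), together with terms from \(\partial_\alpha c_\beta\). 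The Maurer--Cartan identity only constrains the antisymmetric combination \(\partial_\alpha\xi_\beta-\partial_\beta\xi_\alpha\); it says nothing about the symmetric part (which is what survives against \(c_\alpha c_\beta\)) and nothing about \(\partial_\alpha g\). The vague appeal to ``source--target bookkeeping'' does not fill this.

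The paper sidesteps BCH altogether. Rather than evaluating \(\CE\) on a single exponential \(e^{\hbar z}\) with \(z\in\fG_-^\sigma\), it uses the semidirect decomposition \(\fG = T_R\ltimes\fg(\CK)[\![\lambda]\!]\) to argue that an \(R[\![\hbar]\!]\)-linear endomorphism of \(U_R(\fG)[\![\hbar]\!]\) is already determined by its values on factored products \(e^{\hbar x}e^{\hbar y}\) with \(x\in T_R\) and \(y\in\fg(\CK)[\![\lambda]\!]\) separately. Since the second tensor factors of both \(E\) and \(\Theta\) lie in \(\fg^*(\CO)[\![\lambda]\!]\), which pairs trivially with \(T_R\), the \(T_R\)-factor drops out of the evaluation and one obtains \(\CE(e^{\hbar x}e^{\hbar y}) = e^{\hbar\pi(y)}e^{\hbar\phi(y)}\). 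The identification \(\fG_-^\sigma = \{y+\phi(y)\mid y\in\wt\fG_-^\sigma\}\) then shows that \(\CE\) fixes exactly the factored products that span \(U_R(\fG_-^\sigma)[\![\hbar]\!]\), so no recombination of exponentials is ever required.
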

\begin{proof}
    The statements of 1.\ \& 2.\ are proven precisely as in \cite[Lemma 3.4.4.]{abedin2024yangian} using the fact that the identification of tensors with maps from \eqref{eq:evaluating_sstensor} is multiplicative since
    \begin{equation}\label{eq:evaluating_tensors_multiplicative}
        \langle f_1f_2, e^{\hbar x} \rangle a_1a_2 = \langle f_1 \otimes f_2, \Delta(e^{\hbar x})\rangle a_1 a_2 = \langle f_1,e^{\hbar x}\rangle \langle f_2,e^{\hbar x} \rangle a_1a_2
    \end{equation}
    holds for all \(f_1,f_2 \in S\) and \(a_1,a_2 \in U(\fG)[\![\hbar]\!]\).
    
    Result 3.\ then follows from the fact that \(U(\fG)[\![\hbar]\!] \cong U(\fg(\CO))[\![\hbar]\!] \sttensor_{R[\![\hbar]\!]} D_R[\![\hbar]\!]\) implies that an \(R[\![\hbar]\!]\)-linear map \(U(\fG)[\![\hbar]\!]\to U(\fG)[\![\hbar]\!]\) is completely determined by its values on elements \(e^{\hbar x}e^{\hbar y}\) for \(x \in T_R\) and \(y \in \fg(\CK)\). Now
    \begin{equation}
        \begin{split}
            E\Theta(e^{\hbar x}e^{\hbar y}) = E(e^{\hbar x}e^{\hbar y})\Theta(e^{\hbar x}e^{\hbar y}) = E(e^{\hbar y})\Theta(e^{\hbar y}) = e^{\hbar \pi(y)}e^{\hbar \phi(y)}
        \end{split}
    \end{equation}
    holds, where the first equality uses \eqref{eq:evaluating_tensors_multiplicative} again. In particular, we see that \(\CE\) maps \(e^{\hbar y}e^{\hbar \phi(y)}\) to itself for \(y \in \wt\fG^\sigma_-\), which concludes the proof by using \(U(\fG)[\![\hbar]\!] \cong U(\fg(\CO))[\![\hbar]\!] \sttensor_{R[\![\hbar]\!]} D_R[\![\hbar]\!]\) and \(\fG_-^\sigma = \{y + \phi(y) \mid y \in \wt\fG^\sigma_-\}\).
\end{proof}

Let us write \(\CE = \sum_{i \in I} \CE^{(1)}_i \otimes \CE^{(2)}_i\) for some index set \(I\).
Define $t_{\CE}\colon R[\![\hbar]\!]\to \Upsilon_\hbar^\sigma (\fd)$ and $s_{\CE}\colon R[\![\hbar]\!]\to \Upsilon_\hbar^\sigma(\fd)$ by
    \be
s_{\CE}(r)= \sum_{i\in I}\psi(\CE^{(2)}_i)(r)\CE^{(1)}_i \textnormal{ and }t_{\CE}(r)=\sum_{i\in I}\psi(\CE^{(1)}_i)(r)\CE^{(2)}_i\,,\qquad r \in R.
    \ee
    Here, $\psi$ is defined separately on $S_R (\mf G_-^{\sigma,\dagger})[\![\hbar]\!]$ and $U_R (\mathfrak G)[\![\hbar]\!]$ as in Section \ref{sec:generalities}; see Section \ref{sec:counit} for details. Jumping ahead of ourselves, we will show that this $\psi$ assembles into an anchor map for $\Upsilon_\hbar^\sigma (\fd)$. We now prove the following statement. 

\begin{Lem}\label{LemCEst}
For every \(r \in R[\![\hbar]\!]\), we have:
    \be\label{eq:CEst}
s_{\CE}(r)=s(r),\,t_{\CE}(r)=t(r), \textnormal{ and }\CE(s(r)\otimes 1-1\otimes t(r))=0.
     \ee
\end{Lem}

\begin{proof}
    The statement for $s_\CE$ is clear. For $t_\CE$, observe that this map takes values in \(S_R (\mf G_-^{\sigma,\dagger})[\![\hbar]\!]\) and evaluating in any \(a \in U(\fG_-^\sigma)[\![\hbar]\!]\) gives:
    \be
\langle t_{\CE}(r),a \rangle =\sum_{i \in I} \psi(\CE^{(1)}_i)(r)\langle a ,\CE^{(2)}_i\rangle = \psi(a)(r)=t(r)(a).
    \ee
    Here, the fact that \(\CE\) represents the embedding of \(U_R(\fG_-^\sigma)[\![\hbar]\!] \to U_R(\fG)[\![\hbar]\!]\) was used in the second equality. 
    
    To finish the proof, we calculate:
    \be
    \begin{split}
        \CE(1\otimes t(r))(e^{\hbar y})&=\sum_{i \in I}(\CE^{(1)}_i\otimes \CE_i^{(2)}t_{\CE}(r))(e^{\hbar y}) = \sum_{i\in I} \langle t_\CE(r),e^{\hbar y}\rangle\langle \CE_i^{(2)},e^{\hbar y}\rangle \CE_i^{(1)} 
        \\&=\sum_{i \in I}\psi(e^{\hbar y})(r)\CE_i^{(2)}(e^{\hbar y})\CE_i^{(1)} =e^{\hbar y}r \\& = \CE(s_{\CE}(r)\otimes 1) (e^{\hbar y}). 
    \end{split}
    \ee
    Here, \(y \in \fG_-^\sigma\) and we used the fact that \(e^{\hbar y}\) is group-like.
\end{proof}

Because of equation \eqref{eq:CEst}, left multiplication with $\CE$ defines a map
\be
\CE\colon U_R(\mathfrak G)[\![\hbar]\!]\, \sttensor_{R[\![\hbar]\!]} \Upsilon_\hbar^\sigma (\fd)\longrightarrow U_R(\mathfrak G)[\![\hbar]\!]\,{}^s\!\!\otimes_{R[\![\hbar]\!]}^s \Upsilon_\hbar^\sigma (\fd).
\ee
It is clearly invertible since $\CE$ is of the form $1+\CO(\hbar)$. Note that for a tensor
\begin{equation}
    \mathcal{T}= \sum_{i \in I} \mathcal{T}_i^{(1)} \otimes \mathcal{T}_i^{(2)} \in U_R(\mathfrak G)[\![\hbar]\!]\,\sttensor_{R[\![\hbar]\!]}S
\end{equation}
the evaluation on elements $e^{\hbar y}\in U_R(\mf G_-^\sigma)$ by
\be
\mathcal{T}(e^{\hbar y})=\sum_{i \in I}\psi(e^{-\hbar y})(\langle \CT^{(2)}_i,e^{\hbar Y}\rangle)\CT_i^{(1)}. 
\ee
Observe the difference to \eqref{eq:evaluating_sstensor} coming from the difference between \(\sttensor_{R[\![\hbar]\!]}\) and \(\sstensor_{R[\![\hbar]\!]}\).

Let $\Delta$ denote the symmetric coproduct on $U_R(\mathfrak D)[\![\hbar]\!]$. 

\begin{Lem}\label{CEdelta}
    For $\mathcal{T} \in \{\Theta,\CE\}$, we have 
    \be\label{eq:coproduct_of_E}
        (\Delta\otimes 1) \mathcal{T}=\mathcal{T}^{(13)}\mathcal{T}^{(23)} =\mathcal{T}^{(23)}\mathcal{T}^{(13)}\,,\qquad (1\otimes \Delta^\sigma_\hbar) \mathcal{T}=\mathcal{T}^{(13)}\mathcal{T}^{(12)}
    \ee
    and \((\epsilon\otimes 1)\mathcal{T}=1 = (1\otimes \epsilon) \mathcal{T}\). 
\end{Lem}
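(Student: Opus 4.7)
The plan is to verify all six identities by evaluating on group-like elements, using the non-degeneracy in Proposition \ref{prop:coproduct_symmetric_algebra_explicit}.1 applied factor-wise in each copy of \(S \coloneqq S_R(\mf G_-^{\sigma,\dagger})[\![\hbar]\!]\). By Proposition \ref{Prop:rembedTheta}, the tensor \(\mathcal{T}\) represents an \(R[\![\hbar]\!]\)-linear map \(U_R(\mf G_-^\sigma)[\![\hbar]\!] \to U_R(\fG)[\![\hbar]\!]\) that sends each group-like element \(e^{\hbar y}\) for \(y \in \mf G_-^\sigma\) to another group-like element, namely \(e^{\hbar y}\) for \(\mathcal{T} = \CE\) and \(e^{\hbar \phi(y)}\) for \(\mathcal{T} = \Theta\). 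Group-likeness in \(U_R(\fG)[\![\hbar]\!]\) is guaranteed by Lemma \ref{Lem:groupoid_like_elements}.

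For the counit identities, pairing \((\epsilon \otimes 1)\mathcal{T}\) with \(e^{\hbar y}\) produces \(\epsilon(\mathcal{T}(e^{\hbar y}))\), which equals \(1\) because the counit of the Hopf algebroid \(U_R(\fG)[\![\hbar]\!]\) sends group-like elements to \(1\); this matches the pairing of the constant \(1 \in S\) with \(e^{\hbar y}\), so applying Proposition \ref{prop:coproduct_symmetric_algebra_explicit}.1 gives \((\epsilon\otimes 1)\mathcal{T} = 1\). Similarly, \((1 \otimes \epsilon)\mathcal{T} = \mathcal{T}(1_{U_R(\mf G_-^\sigma)})\) evaluates directly to \(1\), since \(\mathcal{T}\) preserves units (either as the inclusion or via the exponentiated map \(\phi\)).

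For the first coproduct identity, the left-hand side \((\Delta \otimes 1)\mathcal{T}\) pairs with \(e^{\hbar y}\) in the \(S\)-slot to \(\Delta(\mathcal{T}(e^{\hbar y}))\), which, by group-likeness of \(\mathcal{T}(e^{\hbar y})\), equals \(\mathcal{T}(e^{\hbar y}) \otimes \mathcal{T}(e^{\hbar y})\). Both \(\mathcal{T}^{(13)}\mathcal{T}^{(23)}\) and \(\mathcal{T}^{(23)}\mathcal{T}^{(13)}\) evaluate to the same tensor square when paired with \(e^{\hbar y}\), using that \(e^{\hbar y}\) is group-like in \(U_R(\mf G_-^\sigma)[\![\hbar]\!]\) to split the pairing across the products of the \(S\)-components (which are equal anyway since \(S\) is commutative). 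Proposition \ref{prop:coproduct_symmetric_algebra_explicit}.1 then yields the triple equality.

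For the identity \((1 \otimes \Delta_\hbar^\sigma)\mathcal{T} = \mathcal{T}^{(13)}\mathcal{T}^{(12)}\), pairing on \(e^{\hbar y_1} \otimes e^{\hbar y_2}\) in positions \(2\) and \(3\) converts the left-hand side to \(\mathcal{T}(e^{\hbar y_2} e^{\hbar y_1})\) (using that \(\Delta_\hbar^\sigma\) is dual to the multiplication on \(U_R(\mf G_-^\sigma)[\![\hbar]\!]\)) and the right-hand side to \(\mathcal{T}(e^{\hbar y_2})\mathcal{T}(e^{\hbar y_1})\) by the same unpacking as in the preceding paragraph. The identity thus reduces to the algebra-homomorphism property of \(\mathcal{T}\). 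For \(\mathcal{T} = \CE\) this is automatic since \(\CE\) represents the inclusion \(U_R(\mf G_-^\sigma)[\![\hbar]\!] \hookrightarrow U_R(\fG)[\![\hbar]\!]\). For \(\mathcal{T} = \Theta\), I expect the main obstacle to be verifying that \(\phi|_{\mf G_-^\sigma} \colon \mf G_-^\sigma \to T_R\) is a Lie algebroid homomorphism, which I plan to do using the Maurer-Cartan flatness \(\partial_\alpha \xi_\beta - \partial_\beta \xi_\alpha + [\xi_\alpha,\xi_\beta] = 0\) (to handle \([\nabla_\alpha,\nabla_\beta]\)) together with \(\gout^\sigma\) being a Lie subalgebroid preserved by \(T_R^\sigma\) (so that \(\phi\) kills all brackets involving \(\gout^\sigma\)). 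Extending to universal enveloping algebras then provides the needed homomorphism property for \(\Theta\), completing the proof.
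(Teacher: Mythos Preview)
Your approach is correct and matches the paper's: both evaluate on group-like elements $e^{\hbar y}$ (respectively $e^{\hbar y_1}\otimes e^{\hbar y_2}$) and use the non-degeneracy from Proposition \ref{prop:coproduct_symmetric_algebra_explicit}.1, reducing the identities to group-likeness of $\mathcal{T}(e^{\hbar y})$ and multiplicativity of $\mathcal{T}$ on $U_R(\mf G_-^\sigma)[\![\hbar]\!]$. For $\Theta$ the paper simply invokes that the projection $U_R(\fG)\to U_R(T_R)=D_R$ is a Hopf algebroid homomorphism rather than verifying directly that $\phi|_{\mf G_-^\sigma}$ is a Lie algebroid map via Maurer--Cartan, but this is the same content packaged differently.
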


\begin{proof}
    We will prove the statement for \(\mathcal{T} = \CE\). The proof for \(\mathcal{T} = \Theta\) is similar, under consideration that the projection \(U_R(\fG) \to U_R(T_R) = D_R\) is a Hopf algebroid homomorphism.
    
    Observe that the both sides of the equalities in \eqref{eq:coproduct_of_E} can be understood as \(R[\![\hbar]\!]\)-linear maps \(U(\fG_-^\sigma)[\![\hbar]\!] \to (U(\fG) \overline{\otimes}_{R[\![\hbar]\!]} U(\fG))\) and \(U(\fG_-^\sigma)[\![\hbar]\!] \sttensor_{R[\![\hbar]\!]} U(\fG_-^\sigma)[\![\hbar]\!] \to U(\fG)[\![\hbar]\!]\) respectively. We prove the equalities by evaluating at \(e^{\hbar y}\) and \(e^{\hbar y_1} \otimes e^{\hbar y_2}\) respectively for \(y,y_1,y_2 \in \fG_-^\sigma\). 
    
    For the first equality, we calculate the LHS by
    \be
((\Delta\otimes 1 )\CE)(e^{\hbar y})=\sum_{i \in I}\langle \CE_i^{(2)},e^{\hbar y}\rangle\Delta(\CE_i^{(1)})=\Delta(e^{\hbar y}) = e^{\hbar y}\otimes e^{\hbar y},
    \ee
    and this coincides with the RHS because of:
    \be
        \begin{split}
            (\CE^{(13)}\CE^{(23)})(e^{\hbar y})&=\sum_{i,j \in I}(\CE_i^{(1)}\otimes \CE_j^{(1)}\otimes \CE_i^{(2)}\CE_j^{(2)})(e^{\hbar Y})
            =\sum_{i,j \in I}\langle \CE_i^{(2)}\CE_j^{(2)},e^{\hbar y}\rangle (\CE_i^{(1)} \otimes \CE_j^{(1)}) \\&= \sum_{i,j \in I}\langle \CE_i^{(2)},e^{\hbar y} \rangle \langle \CE_j^{(2)},e^{\hbar y}\rangle(\CE_i^{(1)} \otimes \CE_j^{(1)}) =e^{\hbar y}\otimes e^{\hbar y}.
        \end{split}
    \ee
Similarly, for the second equality we get on the RHS
\be
\begin{split}
    \CE^{(13)}\CE^{(12)}(e^{\hbar y_1} \otimes e^{\hbar y_2})&=\sum_{i,j \in I}(\CE^{(1)}_i\CE^{(1)}_j\otimes \CE^{(2)}_j\otimes \CE^{(2)}_i)(e^{\hbar y_1} \otimes e^{\hbar y_2}) \\&= 
    \sum_{i,j \in I}\langle \CE^{(2)}_i,e^{\hbar y_1}\langle \CE^{(2)}_j,e^{\hbar y_2}\rangle \rangle \CE^{(1)}_i\CE^{(1)}_j \\&= \sum_{j \in I}e^{\hbar y_1} \langle \CE_j^{(2)},e^{\hbar y_2}\rangle \CE_j^{(1)} = e^{\hbar y_1}e^{\hbar y_2}
\end{split}
\ee
and on the LHS:
    \be
    \begin{split}
        ((1\otimes \Delta_\hbar^\sigma) \CE) (e^{\hbar y_1}\otimes e^{\hbar y_2}) &= \sum_{i \in I} \langle \Delta_\hbar^\sigma(\CE_i^{(2)}),e^{\hbar y_1}\otimes e^{\hbar y_2} \rangle    \CE_i^{(1)} \\&= \sum_{i \in I}\langle \CE_i^{(2)},e^{\hbar y_1}e^{\hbar y_2}\rangle \CE_i^{(1)} = e^{\hbar y_1}e^{\hbar y_2}.
    \end{split}
    \ee
    The statement about counit is obvious from the definition of \(\CE\) as exponential of elements from \(\fG \otimes \fG^\dagger\). 
\end{proof}

Using this, we can define a map
\be
\Delta_{\hbar}^\sigma\colon U_R(\mathfrak G_+)\to U_R(\mathfrak G)[\![\hbar]\!]\sttensor_{R[\![\hbar]\!]} \Upsilon_\hbar^\sigma (\fd),
\ee
given by $\Delta_{\hbar}^\sigma(x)=\CE^{-1}\Delta(x)\CE$ for \(x \in U(\fG_+)[\![\hbar]\!]\). We claim:

\begin{Lem}\label{lem:Delta_on_U_welldefined}
For any $x\in \mathfrak G_+$ we have
\begin{equation}
    \CE^{-1}\Delta(x)\CE = 1 \otimes x + \phi(x)
\end{equation}
where \(\phi(x) \in U_R(\mathfrak G_+)\,{}^s\!\!\otimes_{R[\![\hbar]\!]}^t S_R(\fG_-^{\sigma,\dagger})[\![\hbar]\!]\) is the unique element satisfying \(\phi(x)(e^{\hbar y}) = x \lhd e^{\hbar y}\) for all \(y \in \fG_-^\sigma\).

In particular, we see that
\begin{equation}
     \CE^{-1}\Delta(x)\CE \in U_R(\mathfrak G_+)[\![\hbar]\!]\,\sttensor_{R[\![\hbar]\!]} \Upsilon_\hbar^\sigma (\fd) \subseteq \Upsilon_\hbar^\sigma (\fd)\,\sttensor_{R[\![\hbar]\!]}\Upsilon_\hbar^\sigma (\fd)
\end{equation}
holds for all \(x \in U_R(\fG_+)[\![\hbar]\!]\).
\end{Lem}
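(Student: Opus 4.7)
The plan is to prove the equivalent multiplicative form
\begin{equation*}
    \Delta(x)\CE = \CE\cdot(1 \otimes x + \phi(x))
\end{equation*}
understood as an identity in $U_R(\fG)[\![\hbar]\!] \sttensor_{R[\![\hbar]\!]} \Upsilon_\hbar^\sigma(\fd)$. From this both claimed assertions follow: the first tensor factor of the LHS visibly lies in $U_R(\fG_+)[\![\hbar]\!]$ (since the first factor of the RHS does), and the explicit description of $\phi(x)$ is the one asserted by the evaluation property $\phi(x)(e^{\hbar y}) = x \lhd e^{\hbar y}$.

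The main algebraic input is the matched pair identity of Proposition \ref{prop:matchedULie}, applied to the decomposition $U_R(\fG) = U_R(\fG_-^\sigma) \cdot U_R(\fG_+)$. Since $x \in \fG_+$ is primitive and $e^{\hbar y}$ is group-like, this produces
\begin{equation*}
    x\cdot e^{\hbar y} = (x \rhd e^{\hbar y}) + e^{\hbar y}\cdot(x \lhd e^{\hbar y})
\end{equation*}
with $x \rhd e^{\hbar y} \in U_R(\fG_-^\sigma)[\![\hbar]\!]$ and $x \lhd e^{\hbar y} \in U_R(\fG_+)[\![\hbar]\!]$.

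I would verify the proposed multiplicative identity by evaluating both sides on $e^{\hbar y}$ for an arbitrary $y \in \fG_-^\sigma$ and appealing to Proposition \ref{prop:coproduct_symmetric_algebra_explicit}.1 to conclude equality of tensors. For the LHS, Proposition \ref{Prop:rembedTheta} yields $\CE(e^{\hbar y}) = e^{\hbar y}$, and the multiplicativity of the pairing on group-like elements (as in the proof of Lemma \ref{CEdelta}), combined with the smash-product commutation $x \cdot f = f \cdot x + (x \circ f)$ inside $\Upsilon_\hbar^\sigma(\fd)$, allows one to move $x$ past $\CE$ in the second tensor slot, producing $\CE(1\otimes x) + (\textnormal{correction})$ plus the contribution from $(x\otimes 1)\CE$ which brings in $x \cdot e^{\hbar y}$ in the first slot. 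Substituting $x\cdot e^{\hbar y}$ via the matched pair formula, the $(x \rhd e^{\hbar y})$-piece is precisely what cancels the $x \circ$-correction (since the two are defined dual to each other via \eqref{eq:dual_action_to_rhd}), leaving the $(x \lhd e^{\hbar y})$-piece, which matches $\phi(x)$ evaluated on $e^{\hbar y}$ on the RHS.

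The main obstacle will be the delicate bookkeeping between the source and target structures encoded in $\sttensor_{R[\![\hbar]\!]}$ and the smash-product correction $x \circ f$. The computation is the Lie-algebroid analog of the one establishing coassociativity of the comultiplication of $Y_\hbar(\fd)$ in \cite{abedin2024yangian}, but here the fact that $s \neq t$ on $\Upsilon_\hbar^\sigma(\fd)$ forces one to track carefully how the $R$-structure transforms when the evaluation is applied; the cancellation between the $\rhd$-piece of the matched pair formula and the $x\circ$-correction, built into the definition \eqref{eq:dual_action_to_rhd}, is exactly what ensures that the result has first tensor factor in $U_R(\fG_+)[\![\hbar]\!]$ and closes the proof.
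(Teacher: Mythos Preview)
Your proposal is correct and follows essentially the same route as the paper: both arguments evaluate on group-likes $e^{\hbar y}$, invoke the matched-pair identity $xe^{\hbar y} = (x \rhd e^{\hbar y}) + e^{\hbar y}(x \lhd e^{\hbar y})$, and identify the surviving $\lhd$-term with $\phi(x)(e^{\hbar y})$ after the anchor/derivation terms cancel via the defining relation \eqref{eq:dual_action_to_rhd}. One small logical point: you take $\phi(x)$ as \emph{defined} by the evaluation property inside $U_R(\fG_+)\,{}^s\!\!\otimes^t S_R(\fG_-^{\sigma,\dagger})[\![\hbar]\!]$ and then verify the identity, whereas the paper first sets $\phi(x) \coloneqq \CE^{-1}\Delta(x)\CE - 1 \otimes x$ as an element of the \emph{larger} space $U_R(\fG)\,\sttensor\, S_R(\fG_-^{\sigma,\dagger})[\![\hbar]\!]$, computes its evaluation, and only then deduces from $x \lhd e^{\hbar y} \in \fG_+$ that the first tensor components actually lie in $\fG_+$. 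Your ordering therefore implicitly assumes the existence of $\phi(x)$ in the smaller space, which is exactly what that last deduction provides; you should make that step explicit to avoid circularity.
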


\begin{proof}
We can multiply the equality 
\begin{equation}
    \CE^{-1} \Delta(x) \CE = \CE^{-1} (x \otimes 1 + 1 \otimes x) \CE    
\end{equation}
with \(\CE\) from the right and subtract \(\CE(1 \otimes x)\) from both sides to obtain:
\be
        \Delta(x)\CE-\CE(1\otimes X)= [1\otimes x, \CE]+(x\otimes 1)\CE.
 \ee
 Here, by definition, $(x\otimes 1)\CE = \sum_{i\in I}x\CE^{(1)}_i\otimes \CE^{(2)}_i$ holds and although this is not necessarily well-defined in the correct tensor product, the sum 
 \be
    \sum_{i \in I}\left(\CE^{(1)}_i\otimes [x,\CE^{(2)}_i]+x\CE^{(1)}_i\otimes \CE^{(2)}_i\right) \in U_R(\fG)[\![\hbar]\!] \,\sstensor_{R[\![\hbar]\!]} S_R(\fG_-^{\sigma,\dagger})[\![\hbar]\!]
 \ee
is well-defined. Therefore, we can evaluate this on $e^{\hbar y}$ for \(y \in \fG_-^\sigma\) to get:
\be
\begin{split}
    &\sum_{i \in I}\left(\langle \CE^{(2)}_i,e^{\hbar y}\rangle x\CE^{(1)}+\langle [x, \CE^{(2)}_i],e^{\hbar y}\rangle \CE^{(1)}_i\right)
    \\&=\sum_{i \in I}\left(\langle \CE^{(2)}_i,e^{\hbar y}\rangle x\CE^{(1)}+\langle \CE^{(2)}_i,x \circ e^{\hbar y}\rangle \CE^{(1)}_i\right)
    \\&=\sum_{i \in I}\left(\langle \CE^{(2)}_i,e^{\hbar y}\rangle x\CE^{(1)}_i + \psi(x)(\langle \CE_i^{(2)},e^{\hbar y}\rangle) \CE_i^{(1)} -\langle\CE^{(2)}_i,x\rhd e^{\hbar y}\rangle \CE^{(1)}_i\right)
    \\& =\sum_{i \in I}\left(x \langle \CE^{(2)}_i,e^{\hbar y}\rangle \CE^{(1)}_i  -\langle\CE^{(2)}_i,x\rhd e^{\hbar y}\rangle \CE^{(1)}_i\right)
    \\&= xe^{\hbar y}-x\rhd e^{\hbar y}=e^{\hbar y}(x\lhd e^{\hbar y}).     
\end{split}
\ee
Here, the last equality follows used Proposition \ref{prop:matchedULie}.
We conclude that 
\begin{equation}
    \Phi(x)=\Delta(x)\CE-\CE(1\otimes x) \in U_R(\mathfrak G)\,{}^s\!\!\otimes_{R[\![\hbar]\!]}^s S_R(\fG_-^{\sigma,\dagger})[\![\hbar]\!]
\end{equation}
is the unique element whose evaluation on $e^{\hbar y}$ gives $e^{\hbar y}(x\lhd e^{\hbar y})$ for any \(y \in \fG_-^\sigma\).

Let us now consider $\phi(x)\coloneqq \CE^{-1}\Phi(x) \coloneqq \sum_{i \in J}x^{(1)}_i\otimes x^{(2)}_i$ for some index set \(J\) and \(R[\![\hbar]\!]\)-linearly independent elements $x^{(1)}_i\in U_R(\mathfrak G), x^{(2)}_i \in S_R(\fG_-^{\sigma,\dagger})[\![\hbar]\!]$. Our goal is to prove that $x^{(1)}_i\in \mathfrak G_+$ for all \(i \in J\). Let us compute
\be
\begin{split}
    \Phi(x) &= \sum_{i \in I,j\in J}(\CE^{(1)}_ix^{(1)}_j\otimes \CE^{(2)}_ix^{(2)}_j) (e^{\hbar y})=\sum_{i\in I,j \in J}\langle \CE^{(2)}_i,e^{\hbar y}\rangle \langle x_j^{(2)}, e^{\hbar y}\rangle \CE^{(1)}_ix^{(1)}_j\\&=\sum_{j \in J} \langle x_j^{(2)}, e^{\hbar y}\rangle e^{\hbar y}x^{(1)}_j= e^{\hbar y}\sum_{j \in J} \psi(e^{-\hbar y})(\langle x_j^{(2)},e^{\hbar y}\rangle )x^{(1)}_j,
\end{split}
\ee
where \(e^{-\hbar y}  r e^{\hbar y} = e^{-\hbar\textnormal{ad}(y)}r = \psi(e^{-\hbar y})r\) for any \(r \in R[\![\hbar]\!]\) was used. Therefore, 
\be
x\lhd e^{\hbar y}=\phi(x)(e^{\hbar y})=\sum_{j \in J}\psi(e^{-\hbar y})(\langle x_j^{(2)},e^{\hbar y}\rangle) x_j^{(1)},
\ee
showing that $x_i^{(1)}\in \mathfrak G_+$. Therefore, 
$\Delta_{\hbar}^\sigma(x) = 1 \otimes x + \phi(x) \in U_R(\mathfrak G_+)[\![\hbar]\!]\,\sttensor_{R[\![\hbar]\!]} \Upsilon_\hbar^\sigma (\fd)$ for every \(x \in \fG_+\).
\end{proof}

In order for the maps \(\Delta_\hbar^\sigma \colon U(\fG_+)[\![\hbar]\!] \to U_R(\mathfrak G_+)[\![\hbar]\!]\,\sttensor_{R[\![\hbar]\!]} \Upsilon_\hbar^\sigma (\fd)\) and \[\Delta_\hbar^\sigma \colon S_R(\fG_-^{\sigma,\dagger})[\![\hbar]\!] \to S_R(\fG_-^{\sigma,\dagger})[\![\hbar]\!] \,\sttensor_{R[\![\hbar]\!]} S_R(\fG_-^{\sigma,\dagger})[\![\hbar]\!]\] 
to glue together to a well-defined algebra homomorphism \(\Delta_\hbar^\sigma \colon \Upsilon_\hbar^\sigma(\fd) \to \Upsilon_\hbar^\sigma(\fd)\,\sttensor_{R[\![\hbar]\!]}\Upsilon_\hbar(\fd)\), we need the following:

\begin{Prop}
    The identities
    \be
[\Delta_{\hbar}^\sigma(x_1), \Delta_{\hbar}^\sigma(x_2)]=\Delta_{\hbar}^\sigma([x_1, x_2]) \textnormal{ and } [\Delta_{\hbar}^\sigma(x),\Delta_{\hbar}^\sigma(f)]=\Delta_{\hbar}^\sigma([x, f])
    \ee
    hold for all $x, x_1, x_2\in \mathfrak G_+$ and $f\in S_R(\fG_-^{\sigma,\dagger})[\![\hbar]\!]$. 
    
\end{Prop}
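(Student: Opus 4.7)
The plan is to prove the two identities separately, as they are of quite different natures.

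For the first identity, I would use the definition $\Delta_\hbar^\sigma(x) = \CE^{-1}\Delta(x)\CE$ together with the fact that $\Delta$ is an algebra homomorphism on $U_R(\fG)[\![\hbar]\!]$. Conjugation by the invertible $\CE$ preserves brackets, so formally
\begin{equation*}
    [\Delta_\hbar^\sigma(x_1), \Delta_\hbar^\sigma(x_2)] = \CE^{-1}[\Delta(x_1), \Delta(x_2)]\CE = \CE^{-1}\Delta([x_1, x_2])\CE = \Delta_\hbar^\sigma([x_1, x_2]).
\end{equation*}
The only point requiring care is to verify that each intermediate expression is well-defined and lies in $\Upsilon_\hbar^\sigma(\fd)\sttensor_{R[\![\hbar]\!]} \Upsilon_\hbar^\sigma(\fd)$, but this is already guaranteed by Lemma \ref{lem:Delta_on_U_welldefined}.

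For the second identity, my strategy is to combine an explicit expansion with an evaluation argument. Abbreviate $S = S_R(\fG_-^{\sigma,\dagger})[\![\hbar]\!]$. I would first expand $\Delta_\hbar^\sigma(x) = 1 \otimes x + \phi(x)$ via Lemma \ref{lem:Delta_on_U_welldefined}, write $\phi(x) = \sum_i x^{(1)}_i \otimes x^{(2)}_i \in \fG_+ \sttensor S$ and $\Delta_\hbar^\sigma(f) = \sum_j f^{(1)}_j \otimes f^{(2)}_j \in S \sttensor S$, and use commutativity of $S$ together with the smash-product relation $[x, g] = x \circ g$ for $g \in S$ to obtain
\begin{equation*}
    [\Delta_\hbar^\sigma(x), \Delta_\hbar^\sigma(f)] = \sum_j f^{(1)}_j \otimes (x \circ f^{(2)}_j) + \sum_{i,j}(x^{(1)}_i \circ f^{(1)}_j) \otimes f^{(2)}_j x^{(2)}_i \in S \sttensor S.
\end{equation*}
Since $\Delta_\hbar^\sigma([x, f]) = \Delta_\hbar^\sigma(x \circ f)$ also lies in $S \sttensor S$, by Proposition \ref{prop:coproduct_symmetric_algebra_explicit}.1 the problem reduces to comparing both sides after pairing with arbitrary tensors $e^{\hbar y_1} \rltensor e^{\hbar y_2}$, $y_1, y_2 \in \fG_-^\sigma$, using the pairing of Remark \ref{rem:evalutating_sttenosors}.

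I would evaluate the right-hand side using the dual-to-multiplication definition of $\Delta_\hbar^\sigma|_S$ together with \eqref{eq:dual_action_to_rhd} to get
\begin{equation*}
    \langle \Delta_\hbar^\sigma(x \circ f), e^{\hbar y_1} \rltensor e^{\hbar y_2}\rangle = \psi(x)\langle f, e^{\hbar y_1}e^{\hbar y_2}\rangle - \langle f, x \rhd (e^{\hbar y_1}e^{\hbar y_2})\rangle,
\end{equation*}
and the left-hand side using the characterization $\phi(x)(e^{\hbar y}) = x \lhd e^{\hbar y}$ from Lemma \ref{lem:Delta_on_U_welldefined}. Matching the two should then be achieved by unfolding $x \rhd (e^{\hbar y_1}e^{\hbar y_2})$ via the matched-pair identity $x \rhd (ab) = (x_{(1)} \rhd a_{(1)})((x_{(2)} \lhd a_{(2)}) \rhd b)$ of Proposition \ref{prop:matchedULie}, together with the group-like property $\Delta(e^{\hbar y}) = e^{\hbar y}\otimes e^{\hbar y}$ from Lemma \ref{Lem:groupoid_like_elements} and the counit identity $\epsilon(e^{\hbar y}) = 1$ (both of which use $\psi(y)(1) = 0$).

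The hard part will be the careful bookkeeping between the tensor-product conventions $\sttensor, \rltensor, \sstensor$ and the interaction of the target action on $S$ from Proposition \ref{prop:coproduct_symmetric_algebra_explicit}.2 with the derivation $\psi(x)$ appearing in $\circ$. No new conceptual input is required beyond the matched-pair identities and the group-like behavior of formal exponentials, but the accounting is the algebroid analog of the corresponding calculation in \cite{abedin2024yangian} and is noticeably more delicate due to the nontrivial source/target structure on $S$.
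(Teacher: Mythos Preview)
Your proposal is correct and follows essentially the same approach as the paper: the first identity is immediate from conjugation by $\CE$, and for the second you expand $\Delta_\hbar^\sigma(x) = 1\otimes x + \phi(x)$, compute the commutator as $\sum f^{(1)}\otimes [x,f^{(2)}] + \sum [x_i^{(1)},f^{(1)}]\otimes x_i^{(2)}f^{(2)}$, evaluate both sides on $e^{\hbar y_1}\otimes e^{\hbar y_2}$, and close with the matched-pair identity $x\rhd(e^{\hbar y_1}e^{\hbar y_2}) = (x\rhd e^{\hbar y_1})e^{\hbar y_2} + e^{\hbar y_1}((x\lhd e^{\hbar y_1})\rhd e^{\hbar y_2})$. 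The bookkeeping you flag---the interaction of $\psi(x)$ with the target action when unwinding $x\rhd(e^{\hbar y_1}\langle f^{(1)},e^{\hbar y_2}\rangle)$---is exactly where the paper spends its effort, and the cross terms involving $\psi(x\lhd e^{\hbar y_1})$ cancel between the two pieces of the commutator just as you anticipate.
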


\begin{proof}
    The first identity is clear. Let us prove the second one. The RHS is an element in $S_R(\fG_-^{\sigma,\dagger})[\![\hbar]\!]\,\sttensor_{R[\![\hbar]\!]} S_R(\fG_-^{\sigma,\dagger})[\![\hbar]\!]$ we can evaluate it at \(e^{\hbar y_1} \otimes e^{\hbar y_2}\) for \(y_1,y_2 \in \fG_-^\sigma\) to obtain:
    \be\label{eq:mixed_Delta_LHS}
        \begin{split}
            \langle \Delta_{\hbar}^\sigma([x, f]),e^{\hbar y_1}\otimes e^{\hbar y_2}\rangle &= \langle [x, f],e^{\hbar y_1}e^{\hbar y_2}\rangle = \langle f,x \circ (e^{\hbar y_1}e^{\hbar y_2})\rangle \\&=\psi(x) \langle f, e^{\hbar y_1}e^{\hbar y_2}\rangle -\langle f,x\rhd (e^{\hbar y_1}e^{\hbar y_2})\rangle.
        \end{split}
    \ee
Now write $\Delta_{\hbar}^\sigma(x)=1\otimes x+\phi(x)$ as in Lemma \ref{lem:Delta_on_U_welldefined}, where \(\phi(x) = \sum_{j \in J}x_j^{(1)}\otimes x_j^{(2)}\) satisfies 
\begin{equation}\label{eq:phiXY}
    e^{\hbar y}(x \lhd e^{\hbar y}) = \sum_{i \in J}\langle x_i^{(2)},e^{\hbar y}\rangle e^{\hbar y}x_i^{(1)}
\end{equation}
for all \(y \in \fG_-^\sigma\); see the proof of Lemma \ref{lem:Delta_on_U_welldefined}. Write $\Delta_{\hbar}^\sigma(f)=\sum_{(f)}f^{(1)}\otimes f^{(2)}$ in Sweedler notation, then
\be\label{eq:mixed_Delta_for_welldefinitness}
 [\Delta_{\hbar}^\sigma(x), \Delta_{\hbar}^\sigma(f)] =\sum_{(f)}\left(f^{(1)}\otimes [x, f^{(2)}]+ \sum_{i \in J}\,[x_i^{(1)}, f^{(1)}]\otimes x^{(2)}_i f^{(2)}\right).
\ee
Evaluating this on $e^{\hbar y_1}\otimes e^{\hbar y_2}$ under consideration of Remark \ref{rem:evalutating_sttenosors}, the first term gives
\be\label{eq:XfRHST1}
\begin{aligned}
    &\sum_{(f)}\langle [x, f^{(2)}],e^{\hbar y_1} \langle f^{(1)},e^{\hbar y_2}\rangle \rangle = \sum_{(f)} \langle f^{(2)},x \circ \left(e^{\hbar y_1}\langle f^{(1)},e^{\hbar y_2}\rangle \right)\rangle  
    \\&= \sum_{(f)}\lp\psi(x)\langle f^{(2)},e^{\hbar y_1} \langle f^{(1)},e^{\hbar y_2}\rangle\rangle -\langle f^{(2)},x\rhd \left(e^{\hbar y_1} \langle f^{(1)},e^{\hbar y_2}\rangle\right)\rangle\rp
    \\&= \psi(x)\langle f,e^{\hbar y_1}e^{\hbar y_2}\rangle- \sum_{(f)}\langle f^{(2)}, (x\rhd e^{\hbar y_1} + e^{\hbar y_1} \psi(x\lhd e^{\hbar y_1}))\langle f^{(1)},e^{\hbar y_2}\rangle \rangle\\ &= \psi(x)\langle f,e^{\hbar y_1}e^{\hbar y_2}\rangle - \langle f, (x\rhd e^{\hbar y_1})e^{\hbar y_2}\rangle - \sum_{(f)} \langle f^{(2)}, e^{\hbar y_1} \psi(x\lhd e^{\hbar y_1})\langle f^{(1)},e^{\hbar y_2}\rangle \rangle.
\end{aligned}
\ee
The second term in \eqref{eq:mixed_Delta_for_welldefinitness} gives:
\be\label{eq:XfRHST2}
\begin{aligned}
    &\sum_{(f),i \in J}\langle x_i^{(2)} f^{(2)}, e^{\hbar y_1} \langle[x_i^{(1)}, f^{(1)}],e^{\hbar y_2}\rangle\rangle = \sum_{(f),i \in J}\langle x_i^{(2)},e^{\hbar y_1}\rangle \langle f^{(2)}, e^{\hbar y_1} \langle [x_i^{(1)}, f^{(1)}],e^{\hbar y_2}\rangle\rangle \\&
    =\sum_{(f),i \in J}\langle x_i^{(2)},e^{\hbar y_1}\rangle \langle f^{(2)}, e^{\hbar y_1} \langle f^{(1)},x_i^{(1)}\circ e^{\hbar y_2}\rangle\rangle
    \\&=\sum_{(f),i \in J}\langle x_i^{(2)},e^{\hbar y_1}\rangle \langle f^{(2)}, e^{\hbar y_1} \psi(x_i^{(1)})\langle f^{(1)},e^{\hbar y_2}\rangle - e^{\hbar y_1} \langle f^{(1)},x_i^{(1)}\rhd e^{\hbar y_2}\rangle\rangle.
\end{aligned}
\ee
Now using equation \eqref{eq:phiXY}, this becomes
\be
\sum_{(f)}\left(\langle f^{(2)}, e^{\hbar y_1} \psi(x\lhd e^{\hbar y_1})\langle f^{(1)},e^{\hbar y_2}\rangle \rangle - \langle f^{(2)}, e^{\hbar y_1} \langle f^{(1)},(x\lhd e^{\hbar y_1})\rhd e^{\hbar y_2}\rangle \rangle\right).
\ee
Combining equation \eqref{eq:XfRHST1} and \eqref{eq:XfRHST2}, we find that 
\be
\begin{split}&\langle[\Delta_{\hbar}^\sigma(x), \Delta_{\hbar}^\sigma(f)],e^{\hbar y_1}\otimes e^{\hbar y_2}\rangle \\&= \psi(x) \langle f,e^{\hbar y_1}e^{\hbar y_2}\rangle - \langle f,(x\rhd e^{\hbar y_1})e^{\hbar y_2}+ e^{\hbar y_1}((x\lhd e^{\hbar y_1})\rhd e^{\hbar y_2})\rangle.    
\end{split}
\ee
Using the identity
\be
x\rhd (e^{\hbar y_1}e^{\hbar y_2})=(x\rhd e^{\hbar y_1})e^{\hbar y_2}+ e^{\hbar y_1}((x\lhd e^{\hbar y_1})\rhd e^{\hbar y_2})
\ee
and \eqref{eq:mixed_Delta_LHS} we conclude \(\Delta^\sigma_\hbar([x,f]) = [\Delta_\hbar^\sigma(x),\Delta_\hbar^\sigma(f)]\).
\end{proof}

We have seen so far that \(\Delta_\hbar^\sigma\), which was defined separately on \(U_R(\fG_+)[\![\hbar]\!]\) and \(S_R(\fG_-^{\sigma,\dagger})[\![\hbar]\!]\), give rise to a well-defined algebra homomorphism:
\be
\Delta_{\hbar}^\sigma: \Upsilon_\hbar^\sigma (\fd)\longrightarrow \Upsilon_\hbar^\sigma (\fd) \sttensor_{R[\![\hbar]\!]} \Upsilon_\hbar^\sigma (\fd).
\ee
In order for \(\Delta_\hbar^\sigma\) to be a coproduct, it therefore remains to prove the coassociativity and compatibility with the counit, where the latter will be postponed to the next subsection.

\begin{Prop}
The algebra homomorphism $\Delta_{\hbar}^\sigma$ is coassociative. 
\end{Prop}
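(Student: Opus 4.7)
The plan is to check coassociativity separately on the two generating subalgebras. Since $\Delta_\hbar^\sigma$ is an algebra homomorphism, both $(\Delta_\hbar^\sigma \otimes \id)\Delta_\hbar^\sigma$ and $(\id \otimes \Delta_\hbar^\sigma)\Delta_\hbar^\sigma$ are algebra homomorphisms $\Upsilon_\hbar^\sigma(\fd)\to \Upsilon_\hbar^\sigma(\fd)\sttensor_{R[\![\hbar]\!]}\Upsilon_\hbar^\sigma(\fd)\sttensor_{R[\![\hbar]\!]}\Upsilon_\hbar^\sigma(\fd)$, so it suffices to verify their equality on $S_R(\fG_-^{\sigma,\dagger})[\![\hbar]\!]$ and on $U_R(\fG_+)[\![\hbar]\!]$.

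On $S_R(\fG_-^{\sigma,\dagger})[\![\hbar]\!]$, $\Delta_\hbar^\sigma$ is by construction dual to multiplication in $U_R(\fG_-^\sigma)[\![\hbar]\!]$ (Section~\ref{subsubsec:formalgroupoid}). Using Proposition~\ref{prop:coproduct_symmetric_algebra_explicit} together with the pairing from Remark~\ref{rem:evalutating_sttenosors}, both sides of the coassociativity axiom evaluated on triples $e^{\hbar y_1}\otimes e^{\hbar y_2}\otimes e^{\hbar y_3}$ collapse to $\langle f, e^{\hbar y_1}e^{\hbar y_2}e^{\hbar y_3}\rangle$, and the equality reduces to associativity of the product in $U_R(\fG_-^\sigma)[\![\hbar]\!]$.

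On $U_R(\fG_+)[\![\hbar]\!]$, I would use the twist formula $\Delta_\hbar^\sigma(x)=\CE^{-1}\Delta(x)\CE$ from Lemma~\ref{lem:Delta_on_U_welldefined} together with the identities $(\Delta\otimes 1)\CE = \CE^{(13)}\CE^{(23)} = \CE^{(23)}\CE^{(13)}$ and $(1\otimes\Delta_\hbar^\sigma)\CE = \CE^{(13)}\CE^{(12)}$ supplied by Lemma~\ref{CEdelta}. Expanding $(1\otimes\Delta_\hbar^\sigma)\Delta_\hbar^\sigma(x)$ by multiplicativity, exploiting that $\CE^{(23)}$ commutes with $x\otimes 1\otimes 1$ (disjoint slots) and that $1\otimes\Delta_\hbar^\sigma(x) = \CE^{(23),-1}(1\otimes\Delta(x))\CE^{(23)}$, one obtains
\[(1\otimes\Delta_\hbar^\sigma)\Delta_\hbar^\sigma(x) = \CE^{(12),-1}\CE^{(13),-1}\CE^{(23),-1}\bigl((1\otimes\Delta)\Delta(x)\bigr)\CE^{(23)}\CE^{(13)}\CE^{(12)}.\]
For the other composition, Lemma~\ref{lem:Delta_on_U_welldefined} ensures that $\Delta_\hbar^\sigma(x)\in U_R(\fG_+)[\![\hbar]\!]\sttensor_{R[\![\hbar]\!]}\Upsilon_\hbar^\sigma(\fd)$ so that $(\Delta_\hbar^\sigma\otimes 1)$ is well-defined on it; applying $\Delta_\hbar^\sigma(u)=\CE^{-1}\Delta(u)\CE$ componentwise yields $\CE^{(12),-1}(\Delta\otimes 1)(\CE^{-1}\Delta(x)\CE)\CE^{(12)}$, which expands via $(\Delta\otimes 1)\CE = \CE^{(13)}\CE^{(23)}$ to
\[(\Delta_\hbar^\sigma\otimes 1)\Delta_\hbar^\sigma(x) = \CE^{(12),-1}\CE^{(23),-1}\CE^{(13),-1}\bigl((\Delta\otimes 1)\Delta(x)\bigr)\CE^{(13)}\CE^{(23)}\CE^{(12)}.\]
Coassociativity of $\Delta$ on $U_R(\fG)$ identifies the middle factors and the commutativity $\CE^{(13)}\CE^{(23)} = \CE^{(23)}\CE^{(13)}$ identifies the outer conjugations, giving the claim.

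The main obstacle is that $\CE \in U_R(\fG)[\![\hbar]\!]\sstensor_{R[\![\hbar]\!]} S_R(\fG_-^{\sigma,\dagger})[\![\hbar]\!]$ does not itself lie in $\Upsilon_\hbar^\sigma(\fd)\sstensor_{R[\![\hbar]\!]}\Upsilon_\hbar^\sigma(\fd)$---the first factor is in $U_R(\fG)$ rather than $\Upsilon_\hbar^\sigma(\fd)$---so naive expressions like $(\Delta_\hbar^\sigma\otimes 1)\CE$ are ill-defined. This is precisely why Lemma~\ref{lem:Delta_on_U_welldefined}, which places $\Delta_\hbar^\sigma(x)$ in the correct tensor product $U_R(\fG_+)[\![\hbar]\!]\sttensor\Upsilon_\hbar^\sigma(\fd)$, is essential: it lets me apply $(\Delta_\hbar^\sigma\otimes 1)$ only to the conjugation $\CE^{-1}\Delta(x)\CE$ as an element of that product, rather than to $\CE$ itself.
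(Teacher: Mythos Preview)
Your proof is correct and follows essentially the same approach as the paper's: both verify coassociativity on the two generating subalgebras, reducing the case of $S_R(\fG_-^{\sigma,\dagger})[\![\hbar]\!]$ to associativity of multiplication in $U_R(\fG_-^\sigma)$, and handling $U_R(\fG_+)[\![\hbar]\!]$ via the twist formula $\Delta_\hbar^\sigma=\CE^{-1}\Delta\,\CE$ together with the identities of Lemma~\ref{CEdelta} and coassociativity of $\Delta$. Your explicit remark about why $(\Delta_\hbar^\sigma\otimes 1)\CE$ is ill-defined while $(\Delta_\hbar^\sigma\otimes 1)\bigl(\CE^{-1}\Delta(x)\CE\bigr)$ is not (via Lemma~\ref{lem:Delta_on_U_welldefined}) is a useful clarification that the paper leaves implicit.
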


\begin{proof}
    The coassociativity for elements from $S_R(\fG_-^{\sigma,\dagger})[\![\hbar]\!]$ follows from associativity of multiplication on $U_R (\mf G_-^\sigma)$. To obtain coassociativity on $U_R(\mathfrak G_+)[\![\hbar]\!]$, we compute for every $x\in \mathfrak G_+$:
    \be
(\Delta_{\hbar}^\sigma\otimes 1 )\Delta_{\hbar}^\sigma (x)=(\Delta_{\hbar}^\sigma\otimes 1 )(\CE^{-1}\Delta(x)\CE)=\CE^{(12),-1}(\Delta \otimes 1)(\CE^{-1}\Delta(x)\CE)\CE^{(12)}.
    \ee
Lemma \ref{CEdelta} shows that the above is equal to
\be
    \begin{split}
        &\CE^{(12),-1}\CE^{(13),-1}\CE^{(23),-1} (\Delta \otimes 1)(\Delta (x)) \CE^{(23)}\CE^{(13)}\CE^{(12)}\\&=\CE^{(12),-1}\CE^{(13),-1}\CE^{(23),-1} (1\otimes \Delta )(\Delta (x)) \CE^{(23)}\CE^{(13)}\CE^{(12)}.
    \end{split}
\ee
On the other hand, Lemma \ref{CEdelta} and the coassociativity of \(\Delta\) imply
\begin{equation}
    \begin{split}
        (1 \otimes \Delta_\hbar^\sigma)\Delta_\hbar^\sigma(x) &= (1 \otimes \Delta_\hbar^\sigma)(\CE^{-1}\Delta(x)\CE) = \CE^{(12),-1}\CE^{(13),-1}(1 \otimes \Delta_\hbar^{\sigma})(\Delta(x))\CE^{(13)}\CE^{(12)} \\&=  \CE^{(12),-1}\CE^{(13),-1}\CE^{(23),-1} (1\otimes \Delta )(\Delta (x)) \CE^{(23)}\CE^{(13)}\CE^{(12)},
    \end{split}
\end{equation}
where in the last equality \(\Delta(x) \in U_R(\fG_+)[\![\hbar]\!] \overline{\otimes}_{R[\![\hbar]\!]}U_R(\fG_+)[\![\hbar]\!]\) was used. This concludes the proof.
\end{proof}

\subsubsection{The counit map}\label{sec:counit}

We now construct the counit map $\epsilon\colon \Upsilon_\hbar^\sigma (\fd)\to R[\![\hbar]\!]$. In fact, we construct the counit map together with the anchor map $\psi$. Note that both $U_R (\mathfrak G_+)[\![\hbar]\!]$ and $S_R(\fG_-^{\sigma,\dagger})[\![\hbar]\!]$ come equipped with an anchor map $\psi$. Namely, \(\psi\colon U_R(\fG_+)[\![\hbar]\!] \to \textnormal{End}_{\C[\![\hbar]\!]}(R[\![\hbar]\!])\) is simply the \(\C[\![\hbar]\!]\)-linear extension of the map \(U_R(\fG_+) \to \End(R)\) obtained from the anchor map \(\psi\) of \(\fG_+\) by the universal property of the universal envelope. On the other hand, the anchor map of \(S_R(\fG_-^{\sigma,\dagger})[\![\hbar]\!]\) is simply left multiplication by the evaluation in 1. We now just need to show that these anchor maps assemble into an anchor map for $\Upsilon_\hbar^\sigma (\fd)$, using the universal property of $\Upsilon_\hbar^\sigma (\fd)$. 

\begin{Lem}
   The anchor maps \(\psi\) on \(U_R(\fG_+)[\![\hbar]\!]\) and \(S\) give rise to a well-defined map $\psi \colon \Upsilon_\hbar^\sigma (\fd) \to \End_{\C[\![\hbar]\!]}(R[\![\hbar]\!])$ of  bimodules.
\end{Lem}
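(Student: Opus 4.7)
The plan is to invoke the universal property of the universal envelope $\Upsilon_\hbar^\sigma(\fd) = U_S(\fG_{+,\textnormal{ext}})$, where $S \coloneqq S_R(\fG_-^{\sigma,\dagger})[\![\hbar]\!]$. Setting $A \coloneqq \End_{\C[\![\hbar]\!]}(R[\![\hbar]\!])$, this universal property reduces the construction of $\psi \colon \Upsilon_\hbar^\sigma(\fd) \to A$ to producing an algebra homomorphism $\alpha \colon S \to A$ together with a Lie algebra homomorphism $\beta \colon \fG_{+,\textnormal{ext}} \to A$ satisfying $\beta(fx) = \alpha(f)\beta(x)$ and $[\beta(x), \alpha(f)] = \alpha(\psi_{\fG_{+,\textnormal{ext}}}(x)(f))$ for $f \in S$ and $x \in \fG_{+,\textnormal{ext}}$.

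I would take $\alpha(f)(r) \coloneqq \epsilon(f)\, r$, i.e.\ the preassigned anchor map of $S$ (which is an algebra homomorphism because $S$ is commutative), and extend the anchor $\psi \colon \fG_+ \to \textnormal{Der}(R[\![\hbar]\!]) \hookrightarrow A$ of $\fG_+$ $S$-linearly to $\fG_{+,\textnormal{ext}} = S \,{}^s\!\!\otimes_{R[\![\hbar]\!]}\fG_+[\![\hbar]\!]$ in order to obtain $\beta$.

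The non-trivial step is verifying the cross-bracket compatibility $[\beta(x), \alpha(f)] = \alpha(x\circ f)$ for $x \in \fG_+$ and $f \in S$; the $S$-linearity relation and the bracket identity within $\fG_+$ are built into the construction. Evaluating the LHS on $r \in R[\![\hbar]\!]$ gives $\psi(x)(f(1)\, r) - f(1)\psi(x)(r) = \psi(x)(f(1))\, r$ by the derivation property of $\psi(x)$. For the RHS, I would compute $(x \circ f)(1)$ directly from the defining identity $\langle x\circ f, a\rangle = \psi(x)\langle f, a\rangle - \langle f, x\rhd a\rangle$ combined with $x \rhd 1 = \epsilon(x) = \psi(x)(1) = 0$ for $x \in \fG_+$ (see Proposition \ref{prop:matchedULie}), which yields $(x \circ f)(1) = \psi(x)(f(1))$, matching the LHS.

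Once $\psi$ is produced, the bimodule map property reduces to verifying that $\psi(s(r))$ and $\psi(t(r))$ both act as multiplication by $r$ on $R[\![\hbar]\!]$; both identities follow from $s(r)(1) = t(r)(1) = r$ (see Proposition \ref{prop:coproduct_symmetric_algebra_explicit}), and the right $R$-action compatibility is then automatic by commutativity of $R$. The main obstacle is the cross-bracket verification above, but this reduces entirely to the single identity $x \rhd 1 = 0$ for $x \in \fG_+$.
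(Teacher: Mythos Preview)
Your proposal is correct and follows essentially the same approach as the paper: both invoke the universal property of $\Upsilon_\hbar^\sigma(\fd) = U_S(\fG_{+,\textnormal{ext}})$, reduce to the cross-bracket identity $[\psi(x),\psi(f)] = \psi(x\circ f)$ for $x\in\fG_+$ and $f\in S$, and verify it via the same computation using $x\rhd 1 = \epsilon(x) = 0$; the bimodule property is then deduced identically from $\psi(s(r))=\psi(t(r))=r$.
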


\begin{proof}
Because of the universal property of $U_{S_R(\fG_-^{\sigma,\dagger})[\![\hbar]\!]}(\mathfrak G_+\otimes_{R[\![\hbar]\!]} S_R(\fG_-^{\sigma,\dagger})[\![\hbar]\!])$, we only need to show
\be
[\psi (x), \psi (f)]=\psi( [x, f]) \,,\qquad \forall x \in \fG_+, f\in S_R(\fG_-^{\sigma,\dagger})[\![\hbar]\!].
\ee
For all \(r \in R\)
\be
\begin{split}
    [\psi (x), \psi (f)]( r)&=\psi(x)(\psi (f) (r))-\psi(f)(\psi(x)(r))
    \\&=\psi(x)(\langle f,1\rangle r) - \langle f,1\rangle \psi(x)(r)=\psi (x)(\langle f,1\rangle )r,    
\end{split}
\ee
whereas
\be
    \begin{split}
        \psi([x, f])(r)=\langle [x, f],1\rangle r= \langle f,x\circ 1\rangle r =
        (\psi (x) \langle f,1\rangle-\langle f, x\rhd 1\rangle)r=\psi (x) (\langle f,1\rangle )r.
    \end{split}
\ee
This provides well-definitness of \(\psi\).
Now, \(
\psi (s(r))=\psi (t(r))=r\) implies that \(\psi\) is a bimodule morphism, which concludes the proof.
\end{proof}

To finish the construction, we just need to show that $\psi$ satisfies
\be
\varphi_t (\Delta_{\hbar}^\sigma (h)\otimes r)=ht(r) \textnormal{ and } \varphi_s (\Delta_{\hbar}^\sigma (h)\otimes r)=hs(r)\,,\qquad h \in \Upsilon_\hbar^\sigma(\fd),r \in R[\![\hbar]\!], 
\ee
where $\varphi_t, \varphi_s$ are defined as in  equation \eqref{eq:varphits}. Let us verify this for $t$, since the statement for $s$ is similar. Let us first take $f\in S_R(\fG_-^{\sigma,\dagger})[\![\hbar]\!]$, and write $\Delta_{\hbar}^\sigma (f)=\sum_{(f)}f^{(1)}\otimes f^{(2)}$ in Sweedler notation. Then
\be
\varphi_t (\Delta_{\hbar}^\sigma f\otimes r)=\sum_{(f)}t(\psi (f^{(1)})r)f^{(2)}= \sum_{(f)} t(\langle f^{(1)},1\rangle )t(r) f^{(2)}.
\ee
To show that this is equal to $t(r)f$, we evaluate on $e^{\hbar y}$ for some $y\in \fG_-^\sigma$, which gives:
\be
\begin{split}
    &\sum_{(f)}\langle t(\langle f^{(1)},1\rangle)t(r) f^{(2)}, e^{\hbar y}\rangle =\sum_{(f)}\langle f^{(2)},e^{\hbar y} \langle f^{(1)},1\rangle r\rangle \\&= \langle \Delta_{\hbar}^\sigma(f) ,1\otimes e^{\hbar y}r\rangle =\langle f,e^{\hbar y}r\rangle =\langle t(r)f ,e^{\hbar y}\rangle.    
\end{split}
\ee
Let us now show that if $h\in \Upsilon_\hbar^\sigma (\fd)$ satisfies this equation, and $x\in \mathfrak G_+$, then $hx$ satisfies this equation. By induction, the above equation is then true for all $h\in \Upsilon_\hbar^\sigma (\fd)$, since \(\fG_+\) and \(S_R(\fG_-^{\sigma,\dagger})[\![\hbar]\!]\) topologically generate \(\Upsilon_{\hbar}^\sigma(\fd)\). Write $\Delta_{\hbar}^\sigma (x)=1\otimes x+ \sum_{i \in J}x_i^{(1)}\otimes x_i^{(2)}$, we have
\be
    \begin{split}
        \varphi_t (\Delta_{\hbar}^\sigma (hx)\otimes r)&=\sum_{(h)}\left(t(\psi (h^{(1)})(r))h^{(2)}x+\sum_{i\in J}t(\psi (h^{(1)}x_i^{(1)})(r)) h^{(2)}x^{(2)}_i\right)
        \\&=
        h t(r) x+  h \sum_{i\in J}t(\psi (x_i^{(1)})(r)x_i^{(2)}.
    \end{split}
\ee
It remains to show that
\be\label{eq:remains_to_prove_counit_section}
[x, t(r)]=\sum_{i\in J}t(\psi (x^{(1)}_i) r)x_i^{(2)}
\ee
holds for all $r\in R[\![\hbar]\!]$. To this end, we pair with $e^{\hbar y}$ again. Then the LHS equals
\be
\begin{split}
\langle [x,t(r)],e^{\hbar y}\rangle &= \langle t(r),x \circ e^{\hbar y}\rangle 
=\psi (x) \langle t(r),e^{\hbar y}\rangle-\langle t(r),x\rhd e^{\hbar y}\rangle \\&=\psi (x)(\psi (e^{\hbar y})(r))-\psi (x\rhd e^{\hbar y})(r).
\end{split}
\ee
Using Lemma \ref{lem:Delta_on_U_welldefined}, the evaluation of the RHS of \eqref{eq:remains_to_prove_counit_section} is
\be
\sum_{i\in J}\psi (e^{\hbar y})\psi (x_i^{(1)}) (r) \langle x_i^{(2)},e^{\hbar y}\rangle =\psi (e^{\hbar y} (x\lhd e^{\hbar y}))(r).
\ee
Now the desired equality follows from $xe^{\hbar y}=x\rhd e^{\hbar y}+e^{\hbar y} (x\lhd e^{\hbar y})$. 

Therefore, we can define the counit of \(\Upsilon_\hbar^\sigma(\fd)\) by $\epsilon(h)=\psi(h)(1_R)$. This finishes the construction of the Hopf algebroid structure on $\Upsilon_\hbar^\sigma (\fd)$. 

\subsubsection{The Hopf subalgebroid $\wt \Upsilon_\hbar^\sigma (\fd)$}

Since $U_R(\gol)$ is a subalgebra of $U_R(\mathfrak G_+)$, we can define the subalgebra $\wt \Upsilon_\hbar^\sigma (\fd)$ to be the subalgebra of $\Upsilon_\hbar^\sigma (\fd)$ generated by $U_R (\gol)[\![\hbar]\!]$ and $S_R(\fG_-^{\sigma,\dagger})[\![\hbar]\!]$. As an algebra, this is simply isomorphic to $Y_\hbar (\fd)[\![\lambda]\!]$, and the algebra $\Upsilon_\hbar^\sigma (\fd)$ is generated by $\wt\Upsilon_\hbar^\sigma (\fd)$ together with the algebroid of differential operators on $R=\C\lbb\lambda\rbb$. It is clear that $\wt\Upsilon_\hbar^\sigma (\fd)$ is in fact a Hopf subalgebroid of $\Upsilon_\hbar^\sigma (\fd)$. Indeed, the only nontrivial statement is to verify that the coproduct indeed restricts to this subalgebroid. This is because for any $x\in \gol$ and any $y\in \fG_-^\sigma$, the element $x\lhd e^{\hbar y}$ is still an element in $\gol$, due to the fact that $\gkl$ is an ideal inside the Lie algebroid $\mathfrak G=T_R\ltimes \gkl$. 

In fact, it is not difficult to see that under the identification of $\Upsilon_\hbar^\sigma (\fd)=D_R\otimes Y_\hbar (\fd)$, the commutation relation between $\pd_\alpha\in T_R$ with an element in $\wt\Upsilon_\hbar^\sigma (\fd)=Y_\hbar (\fd)\lbb\lambda\rbb$ is given by the action of $\pd_\alpha$ on $R=\C\lbb\lambda\rbb$. We introduce this subalgebra since in the next section, the twisting matrix and dynamical $R$-matrix will be valued in this subalgebra. 

\subsection{Classical limit}

The goal of this section, is to identify the classical limit of \(\Upsilon_\hbar^\sigma(\fd)\). We have:

\begin{Prop}
    The classical limit Lie bialgebroid of \(\Upsilon_\hbar^\sigma(\fd)\) is the Lie bialgebroid introduced in Section \ref{sec:lie_bialgebroid}.
\end{Prop}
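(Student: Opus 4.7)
The plan is to verify the claim in three stages: first identify the algebra of $\Upsilon_\hbar^\sigma(\fd)$ modulo $\hbar$ as $U_R(\fD_+)$, then show that the Hopf algebroid structure collapses to the standard cocommutative one on $U_R(\fD_+)$ at $\hbar = 0$, and finally extract the dual differential $d^\sigma_\dagger$ from the first-order $\hbar$-corrections and match it with Proposition \ref{prop:dual_differential}. As noted in Section \ref{subsec:qLiebialgebroid}, the Lie bialgebroid structure is determined by the Lie algebroid $\fD_+$ together with the action of $d^\sigma_\dagger$ on $R$ and on $\fD_+$, so this suffices.

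For the first stage, I would use the canonical algebra isomorphism $\Upsilon_\hbar^\sigma(\fd) \cong D_R \otimes Y_\hbar(\fd)[\![\lambda]\!]$ together with Theorem \ref{thm:previousresult1}.1, which gives $Y_\hbar(\fd)/\hbar \cong U(\fd(\CO))$ as a quantization of $(\fd(\CO),\delta)$. Thus $\Upsilon_\hbar^\sigma(\fd)/\hbar \cong D_R \otimes U(\fd(\CO))[\![\lambda]\!] \cong U_R(T_R \ltimes \fd(\CO)[\![\lambda]\!]) = U_R(\fD_+)$ as algebras. The essential algebraic verification at $\hbar = 0$ is that the cross relation $[x,\hbar f] = \hbar(x\circ f)$ from the smash product, once normalized, reduces to the semidirect product bracket of $\fG_+$ with $\fG_-^{\sigma,\dagger} = \fg^*(\CO)[\![\lambda]\!]$. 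For $x \in \fG_+$ and $f \in \fG_-^{\sigma,\dagger}$, evaluating $x\circ f$ on a linear $a \in \fG_-^\sigma$ gives $\psi(x)\langle f,a\rangle - \langle f,\pi_{\fG_-^\sigma}[x,a]\rangle$; since $\fG_-^{\sigma,\dagger}=\fG_+^{\perp}$ annihilates $\fG_+$, this equals $(L_xf)(a)$, i.e.\ the trivial-double bracket of Example \ref{ex:trivial_double} restricted to $\fD_+$. Since $\CE = 1 + O(\hbar)$, the coproduct becomes cocommutative and primitive on $\fG_+$ at $\hbar = 0$, and by Proposition \ref{prop:coproduct_symmetric_algebra_explicit}.3 also on $\hbar f \in S_R(\hbar\fG_-^{\sigma,\dagger})[\![\hbar]\!]$ modulo $\hbar^2$; this identifies the classical coproduct with the standard one on $U_R(\fD_+)$.

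For the second stage, I would extract the dual algebroid structure on $\fD_+^\dagger = \fD_-^\sigma$. Its action on $R$ comes from $\hbar^{-1}(t-s)$ at $\hbar = 0$: by Proposition \ref{prop:coproduct_symmetric_algebra_explicit}.2, for $y \in \fG_-^\sigma$ we have $(t(r)-s(r))(e^{\hbar y}) = e^{\hbar\psi(y)}(r) - r = \hbar\psi(y)(r) + O(\hbar^2)$, recovering precisely the anchor map of $\fG_-^\sigma \subseteq \fD_-^\sigma$ on $R$. Its action on $\fG_+ \subseteq \fD_+$ is then obtained from the skew-symmetrized coproduct: expanding $\CE = E\Theta = 1 + \hbar\mathcal{K} + O(\hbar^2)$ with $\mathcal{K} = r(t_1,t_2;\lambda) + \sum_\alpha \partial_\alpha \otimes \omega_\alpha$ (by Proposition \ref{Prop:rembedTheta}), we compute
\begin{equation}
\Delta_\hbar^\sigma(x) = \CE^{-1}(x\otimes 1 + 1\otimes x)\CE = x\otimes 1 + 1\otimes x + \hbar[x\otimes 1 + 1\otimes x,\mathcal{K}] + O(\hbar^2).
\end{equation}
Skew-symmetrizing and dividing by $\hbar$ yields $[x(t_1;\lambda)\otimes 1 + 1\otimes x(t_2;\lambda), \rho(t_1,t_2;\lambda) + \sum_\alpha \partial_\alpha \wedge \omega_\alpha]$, since $\mathcal{K} - \mathcal{K}^{(21)} = \rho + \sum_\alpha\partial_\alpha \wedge \omega_\alpha$ by \eqref{eq:def_rho}. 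This matches \eqref{eq:differential_via_rho} exactly. Finally, on $f \in \fG_-^{\sigma,\dagger}$ the coproduct is dual to multiplication in $U_R(\fG_-^\sigma)$, so its skew-symmetrization at order $\hbar$ reproduces the dual of the Lie bracket on $\fG_-^\sigma$, which is encoded in the Lie algebroid structure of $\fD_-^\sigma$ as desired.

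The main technical obstacle I anticipate is the bookkeeping surrounding the mixed commutator $[\fG_+, \fG_-^{\sigma,\dagger}]$: one must carefully distinguish the $\hbar$-rescaled embedding $\hbar f \hookrightarrow S_R(\hbar\fG_-^{\sigma,\dagger})[\![\hbar]\!]$ used in the algebra from the intrinsic pairing, and verify the identification $x\circ f = L_x f$ in $\fG_-^{\sigma,\dagger}$ by recognizing that the absence of a $-\tfrac{1}{2}df(x)$ correction in the trivial-double bracket (Example \ref{ex:trivial_double}) is exactly compensated by the Lagrangian condition $\fG_-^{\sigma,\dagger} = \fG_+^{\perp}$. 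Once this is settled, matching the $\hbar$-order terms with \eqref{eq:differential_via_rho} is a direct computation.
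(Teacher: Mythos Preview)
Your proposal is correct and follows essentially the same route as the paper: identify $\Upsilon_\hbar^\sigma(\fd)/\hbar \cong U_R(\fD_+)$ via $\Upsilon_\hbar^\sigma(\fd) \cong D_R \otimes_R Y_\hbar(\fd)[\![\lambda]\!]$ and Theorem~\ref{thm:previousresult1}.1, extract the dual anchor from $\hbar^{-1}(t-s)$ using Proposition~\ref{prop:coproduct_symmetric_algebra_explicit}.2, and recover $d^\sigma_\dagger$ from the $\hbar$-order of $\Delta_\hbar^\sigma - \Delta_\hbar^{\sigma,(21)}$ by combining Propositions~\ref{prop:coproduct_symmetric_algebra_explicit}, \ref{Prop:rembedTheta}, and~\ref{prop:dual_differential}. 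Your expansion $\CE = 1 + \hbar\mathcal{K} + O(\hbar^2)$ with $\mathcal{K}-\mathcal{K}^{(21)} = \rho + \sum_\alpha \partial_\alpha \wedge \omega_\alpha$ makes explicit what the paper packages as a citation of those three propositions; your verification that the smash-product cross relation reduces to $L_x f$ is likewise a more hands-on version of what the paper obtains by invoking that $Y_\hbar(\fd)$ quantizes $(\fd(\CO),\delta)$.
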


\begin{proof}
    We first have to recognize the classical limit as an algebra. However, we have already seen that \(\Upsilon_\hbar^\sigma(\fd) \cong D_R\otimes_{R}Y_\hbar(\fd)[\![\lambda]\!]\) holds as algebras. But
    \begin{equation}
        Y_\hbar(\fd)[\![\lambda]\!]/\hbar Y_\hbar(\fd)[\![\lambda]\!] = U(\fd(\CO)) \otimes R    
    \end{equation}
    holds by virtue of \cite[Theorem 3.5]{abedin2024yangian}. So 
    \begin{equation}
        \Upsilon_\hbar^\sigma(\fd)/\hbar \Upsilon_\hbar^\sigma(\fd) \cong D_R \otimes_R U(\fd(\CO))[\![\lambda]\!] \cong U_R(\fD_+)    
    \end{equation} 
    holds as Hopf algebroids.

    The difference of the source and target map of \(\Upsilon_\hbar^\sigma(\fd)\) gives
    \begin{equation}
        \begin{split}
            \langle t(r)-s(r),e^{\hbar y}\rangle &= \langle 1,e^{\hbar y}r\rangle - \langle 1,e^{\hbar y}\rangle r = \langle 1, \psi(e^{\hbar y})(r)\rangle +\langle 1,e^{\hbar y}\rangle r - \langle 1,e^{\hbar y}\rangle r  \\&= \hbar \psi(y)(r_1) + \CO(\hbar^2)
        \end{split}
    \end{equation}
    for \(y \in \fG_-^\sigma\), \(f \in \fg^*(\CO)[\![\lambda]\!]\), and \(r = r_0 + \hbar r_1 + \CO(\hbar^2) \in R[\![\hbar]\!]\). The \(\hbar\)-order is therefore precisely the dual of the anchor map of \(\fG_-^\sigma\). Since the anchor map of \(\fD_-^\sigma = \fG_-^\sigma \oplus \fG_-^{\sigma,\dagger}\) factors over the projection \(\fD_-^\sigma \to \fG_-^\sigma\) and \(s,t\colon R[\![\hbar]\!]\to\Upsilon_\hbar^\sigma(\fd)\) factors over the embedding \(S_R(\fG_-^{\sigma,\dagger})[\![\hbar]\!] \to \Upsilon_\hbar^\sigma(\fd)\), this difference actually describes the dual of the anchor map of \(\fD_-^\sigma\).

    Next, combining propositions \ref{prop:coproduct_symmetric_algebra_explicit}, \ref{Prop:rembedTheta}, and \ref{prop:dual_differential}, we see that
    \begin{equation}
        \Delta_\hbar^\sigma - \Delta_\hbar^{\sigma,21} = \hbar d_\dagger^\sigma+\CO(\hbar^2).
    \end{equation}
    The dual of the anchor of \(\fD_-^\sigma\) and the dual differential \(d_\dagger^\sigma\) now equip \(\fD_+\) with the unique structure of a Lie bialgebroid, namely the one given in Section \ref{sec:lie_bialgebroid}. This concludes the proof.
\end{proof}

\subsection{Summary}

Let us summarize what has been done in this section. 

\begin{enumerate}
    \item We constructed an algebra $\Upsilon_\hbar^\sigma (\fd)$, which is essentially the \(R[\![\hbar]\!]\)-smash product of the Hopf algebroid $U_R (\mathfrak G_+)[\![\hbar]\!]$ with the algebra $S_R(\fG^{\sigma,\dagger}_-)[\![\hbar]\!] \supseteq S_R(\hbar \fG^{\sigma,\dagger}_-)[\![\hbar]\!] = U_R (\mf G_-^\sigma)^\dagger[\![\hbar]\!]$ over $R[\![\hbar]\!]$. As an algebra, it is identified with $D_R\otimes_{R} Y_\hbar (\fd)[\![\lambda]\!]$.

\item We then constructed a Hopf algebroid structure on this \(\Upsilon_\hbar^\sigma(\fd)\). The source and target map are constructed from the source and target map of $S_R(\fG^{\sigma,\dagger}_-)[\![\hbar]\!]$, which mimics the source and target map of the formal groupoid $\exp \lp \mf G_-^\sigma\rp$. The coproduct is constructed to mimic the convolution monoidal structure on the double quotient of groupoids 
\[\exp(\mathfrak G_+)\setminus\!\exp(\mathfrak G)/\exp(\mathfrak G_+).\]
The anchor map is constructed from the anchor map of $U_R (\mathfrak G_+)[\![\hbar]\!]$, together with the identity section map $S_R(\fG^{\sigma,\dagger}_-)[\![\hbar]\!]\to R[\![\hbar]\!]$. 

\item There is a canonical Hopf subalgebroid $\wt \Upsilon_\hbar^\sigma (\fd)$, which as is isomorphic to $Y_\hbar (\fd) \otimes R$ as an algebra. 

\item We showed that the Hopf algebroid $\Upsilon_\hbar^\sigma (\fd)$ is a quantization of the Lie bialgebroid structure on $\mf D_+$ introduced in Section \ref{sec:lie_bialgebroid}.  

\end{enumerate}

\section{Dynamical twist and the $R$-matrix}\label{sec:dynamicalR}

We have shown in the previous section that the algebra $\Upsilon_\hbar^\sigma (\fd)$ has the structure of a Hopf algebroid over $R[\![\hbar]\!]$ and, as an algebra, it is simply $D_R\otimes_{R} Y_\hbar (\fd)[\![\lambda]\!]$. This algebra can be given yet another Hopf algebroid structure, by combining the Hopf algebroid structure of $D_R[\![\hbar]\!]$ with the Hopf algebra structure of $Y_\hbar (\fd)$ described in Section \ref{sec:Yangian}. In this section, we show that the two are related by a dynamical twist in the sense we discussed in Section \ref{subsec:TwistHopf}. We then decompose the twist and construct a dynamical version of the meromorphic $R$-matrix. 

\subsection{The dynamical twist \(\CF\)}

Consider $\Upsilon_\hbar(\fd)\coloneqq D_R\otimes_{R} Y_\hbar (\fd)[\![\lambda]\!]$, given by the natural Hopf algebroid structure over $R[\![\hbar]\!]$. This Hopf algebroid is simply the tensor product of the Hopf algebroid structure on $D_R=U_R (T_R)$ with the Hopf algebra structure on $Y_\hbar (\fd)[\![\lambda]\!]$ over \(R[\![\hbar]\!]\). Its classical limit is the Lie bialgebroid obtained via the trivial dynamicalification discussed in Section \ref{subsec:trivialdyn}. There is an isomorphism of algebras $\Upsilon_\hbar(\fd)\cong \Upsilon_\hbar^\sigma (\fd)$. To distinguish them as Hopf algebroids, we will denote the Hopf algebroid structure of $\Upsilon_\hbar(\fd)$ by the tuple
\be
\lp \Upsilon_\hbar (\fd), s_\gamma, t_\gamma, m, \Delta_\hbar^\gamma, \psi\rp.
\ee
We will use the same notation as in Section \ref{sec:constructHopf} for the Hopf algebroid structure of $\Upsilon_\hbar^\sigma (\fd)$. Note that $s_\gamma=t_\gamma=s$, and the two Hopf algebroids have the same anchor map $\psi$. In this section, we construct a dynamical twist between $\Upsilon_\hbar(\fd)$ and $\Upsilon_\hbar^\sigma (\fd)$, and show that it restricts to a well-defined twist on $\wt \Upsilon_\hbar(\fd) \coloneqq Y_\hbar(\fd)[\![\lambda]\!] \cong \wt \Upsilon_\hbar^\sigma (\fd)$, where the isomorphism is again only on the level of algebras. 

Let $\CE_\gamma$ be the element introduced in \eqref{eq:CEgamma}, which obviously satisfies $\CE_\gamma (s(r)\otimes 1-1\otimes s(r))=0$ for all \(r \in R[\![\hbar]\!]\).
In the rest of the section, we prove the following theorem.  

\begin{Thm}\label{Thm:quantumtwist}
    
    The following statements are true.

    \begin{enumerate}

    \item $\CF\coloneqq \CE_\gamma^{-1}\CE\in U_R (\mathfrak G)[\![\hbar]\!]\,{}^s\!\!\otimes_{R[\![\hbar]\!]}^s S_R(\fG^{\sigma,\dagger}_-)[\![\hbar]\!]$ is an invertible element in the tensor product $U_R (\mathfrak G_+)[\![\hbar]\!]\,{}^s\!\!\otimes_{R[\![\hbar]\!]}^s S_R(\fG^{\sigma,\dagger}_-)[\![\hbar]\!]$, and can be viewed as an element in $\Upsilon_\hbar(\fd)\sttensor_{R[\![\hbar]\!]} \Upsilon_\hbar(\fd)$. 
    
    \item \(\CF^{-1}\Delta_\hbar^\gamma \CF=\Delta_{\hbar}^\sigma\), \((\Delta_\hbar^\gamma\otimes 1) (\CF)\CF^{(12)}=(1\otimes \Delta_\hbar^\gamma) (\CF)\CF^{(23)}\) and $(\epsilon\otimes 1)\CF=1 =(1\otimes \epsilon)\CF$ holds. 

    \item $R_\CF=R$, with $(s_\gamma)_\CF=s$ and $(t_\gamma)_\CF=t$ in the notation of Section \ref{subsec:TwistHopf}.
        
    \end{enumerate}
    In particular, $\CF$ is a twist between $\Upsilon_\hbar(\fd)$ and $\Upsilon_\hbar^\sigma (\fd)$. 
\end{Thm}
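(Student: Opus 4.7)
The approach exploits the parallel structural roles of $\CE$ and $\CE_\gamma$: both encode embeddings of complementary subalgebras ($U(\fG_-^\sigma) \hookrightarrow U(\fG)$ and $U(t^{-1}\fg[t^{-1}]) \hookrightarrow U(\fg(\CK))$ respectively), so their ratio $\CF = \CE_\gamma^{-1}\CE$ should implement the intertwining between the static and dynamical Hopf-algebroid structures. I would organize the proof by verifying parts 1--3 separately.

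For part 1, the plan is to use the factorization $\CE = E\Theta$ from Proposition \ref{Prop:rembedTheta}, rewriting $\CF = (\CE_\gamma^{-1}E)\Theta$. Both $E = \exp(\hbar r)$ and $\CE_\gamma$ share the same leading $\hbar$-term $\tfrac{1}{t_1-t_2}\sum_\alpha I_\alpha \otimes I^\alpha$ in the tensor algebra, while the difference $r - \tfrac{1}{t_1-t_2}\sum_\alpha I_\alpha \otimes I^\alpha$ is regular (valued in $\fg(\CO) \otimes \fg^*(\CO)\lbb\lambda\rbb$). A Baker--Campbell--Hausdorff expansion combined with PBW and the smash-product structure of $Y_\hbar(\fd)$ then shows that $\CE_\gamma^{-1}E$ can be rewritten as an element of $Y_\hbar(\fd)\lbb\lambda\rbb \sstensor Y_\hbar(\fd)\lbb\lambda\rbb$ (with first factor in $U(\fg(\CO))\lbb\hbar,\lambda\rbb \# S$). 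Multiplying by $\Theta \in D_R\lbb\hbar\rbb \sstensor S$ then places $\CF$ in $\Upsilon_\hbar(\fd) \sttensor \Upsilon_\hbar(\fd)$ as claimed. Invertibility follows from $\CF = \mathbf{1} + O(\hbar)$ and $\hbar$-adic completeness.

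For part 2, I would check the three identities on topological generators of $\Upsilon_\hbar(\fd)$. The twist equation for $x \in \fg(\CO) \subseteq \fG_+$ follows by direct cancellation using $\Delta_\hbar^\gamma(x) = \CE_\gamma^{-1}\Delta(x)\CE_\gamma$ and $\Delta_\hbar^\sigma(x) = \CE^{-1}\Delta(x)\CE$; for $x = \partial_\alpha \in T_R$ one exploits $[\partial_\alpha, \CE_\gamma] = 0$ (since $\CE_\gamma$ has no $\lambda$-dependence) to reduce to the analogous identity involving only $\CE$; for $f \in S$ one evaluates both sides on $e^{\hbar y_1} \otimes e^{\hbar y_2}$ with $y_i \in \fG_-^\sigma$ and uses the interpretations of $\Delta_\hbar^{\sigma}|_S$ and $\Delta_\hbar^{\gamma}|_S$ as duals of multiplication in $U(\fG_-^\sigma)$ and $U(t^{-1}\fg[t^{-1}])$ respectively. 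For the cocycle equation, I would combine the identities $(\Delta \otimes 1)\CE = \CE^{(13)}\CE^{(23)}$ and $(1 \otimes \Delta_\hbar^\sigma)\CE = \CE^{(13)}\CE^{(12)}$ from Lemma \ref{CEdelta} with their parallels for $\CE_\gamma$ from \cite{abedin2024yangian}, telescoping both sides into a common expression. The counit condition is immediate from these identities. Part 3 is then obtained by computing $s_\CF$ and $t_\CF$ directly from their definitions using Lemma \ref{LemCEst} and the trivial analogue for $\CE_\gamma$ (whose source and target coincide).

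The main obstacle will be the twist equation on $S$, since $\Delta_\hbar^\gamma|_S$ and $\Delta_\hbar^\sigma|_S$ dualize fundamentally different multiplications: the interplay between these requires careful tracking of the pairings and of the action of $\CF$ on $\fG_-^\sigma$-generators. A closely related difficulty is making the cocycle telescoping genuinely close, which requires expressing $\Delta_\hbar^\gamma$ applied to the first factor of $\CE$ (a priori in $U(\fG)\lbb\hbar\rbb$, extending beyond $U(\fG_+)\lbb\hbar\rbb$) via conjugation by $\CE_\gamma$ and carefully commuting past the various $\CE^{(ij)}$-factors.
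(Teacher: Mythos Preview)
Your approach to Parts~2 and~3 is essentially what the paper does (Part~2 is deferred to \cite[Proposition~3.7]{abedin2024yangian}, which is precisely the generator-by-generator check you outline; Part~3 is a direct computation on group-likes). The honest identification of the twist-on-$S$ step as the delicate point is accurate.

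The gap is in Part~1. The BCH argument you sketch does not close: writing $\CE_\gamma^{-1}E = \exp(-\hbar\gamma_+)\exp(\hbar r)$ and expanding by BCH produces iterated commutators whose first tensor factor lies in the Lie subalgebra of $\fg(\CK)[\![\lambda]\!]$ generated by $t^{-1}\fg[t^{-1}]$ and $\widetilde{\fG}_-^\sigma$. There is no reason for such commutators to land in $\fg(\CO)[\![\lambda]\!]$; already $[\gamma_+,r-\gamma_+]$ has first factor of the form $[I_\alpha t^{-k-1},\text{(regular)}]$, which has a pole. Invoking ``the smash-product structure of $Y_\hbar(\fd)$'' does not help here, since $Y_\hbar(\fd)$ does not contain $t^{-1}\fg[t^{-1}]$ at all---there is no relation in that algebra that lets you trade negative powers of $t$ in the first factor for something regular.

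The paper's argument bypasses this entirely by working with $\CF$ as a map rather than as an exponential. For $y\in\fG_-^\sigma$, use the group-like factorization $e^{\hbar y}=e^{\hbar x_1}e^{\hbar x_2}$ in $U_R(\fG)[\![\hbar]\!]$ with $x_1\in t^{-1}\fg[t^{-1}][\![\lambda;\hbar]\!]$ and $x_2\in\fG_+[\![\hbar]\!]$ (Lemma~\ref{Lem:groupoid_like_elements}). Since $\CE$ represents the embedding $U(\fG_-^\sigma)\hookrightarrow U(\fG)$ and $\CE_\gamma$ represents the embedding $U(t^{-1}\fg[t^{-1}][\![\lambda]\!])\hookrightarrow U(\fG)$ (both extended by zero), one computes directly that $\CF(e^{\hbar y})=e^{\hbar x_2}\in U_R(\fG_+)[\![\hbar]\!]$. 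This identifies $\CF$ with the tensor representing the map $e^{\hbar y}\mapsto e^{\hbar x_2}$, which manifestly lies in $U_R(\fG_+)[\![\hbar]\!]\,{}^s\!\!\otimes^s_{R[\![\hbar]\!]}S_R(\fG_-^{\sigma,\dagger})[\![\hbar]\!]$. This factorization is also what drives the computation of $(t_\gamma)_\CF$ in Part~3, via $\psi(e^{\hbar y})=\psi(e^{\hbar x_1})\psi(e^{\hbar x_2})=\psi(e^{\hbar x_2})$, so your reference to Lemma~\ref{LemCEst} there should be replaced by this decomposition.
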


Let us start by proving Theorem \ref{Thm:quantumtwist}.1. Note that we can canonically identify $U_R (\mathfrak G_-^\sigma)$ and $U_R (t^{-1}\fg[t^{-1}]\lbb\lambda\rbb)$ as $U_R (\mathfrak G_+)$-modules, and both are identified with
\be
S_R (\mathfrak G/ \mathfrak G_+)\cong U_R (\mathfrak G)\otimes_{U_R (\mathfrak G_+)} R
\ee
as cocommutative coalgebras over $R$. From this perspective, we can think of $\CE$ and $\CE_\gamma$ as giving two embeddings of $S_R (\mathfrak G/ \mathfrak G_+)$ into $U_R (\mathfrak G)$, which induces two different algebra structures. Moreover, we have two splittings of the cocommutative coalgebra $U_R (\mathfrak G)$:
\be
\CE (S_R (\mf G/\mf G_+))\rltensor_R U_R (\mathfrak G_+) \cong U_R (\mathfrak G)\cong \CE_\gamma\lp S_R\lp \fg(\CK)[\![\lambda]\!]/\fg(\CO)[\![\lambda]\!]\rp\rp\rltensor_R U_R (\mathfrak G_+).
\ee

For any $x\in S_R (\mathfrak G/ \mathfrak G_+)$, we can obtain an element in $U_R (\fg_{<0}\lbb\lambda\rbb)$ by first embedding $x$ into $U_R (\mathfrak G)$ using $\CE$, then apply the counit on the first factor of the decomposition using $\CE_\gamma$. This is clearly an $R$-linear map producing an element in $U_R (\mathfrak G_+)$, which we can view as a tensor
\be
\CF'\in U_R (\mathfrak G_+)\,{}^s\!\!\otimes_{R[\![\hbar]\!]}^s S_R(\fG^{\sigma,\dagger}_-)[\![\hbar]\!].
\ee
The following lemma concludes the proof of Theorem \ref{Thm:quantumtwist}.1. 

\begin{Lem}\label{Lem:CFCF'}
    We can identify $\CF'=\CF$ and this tensor is invertible.  
\end{Lem}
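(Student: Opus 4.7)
The plan is to establish the tensor identity $\CE_\gamma \cdot \CF' = \CE$ in a common ambient tensor product, from which $\CF' = \CE_\gamma^{-1}\CE = \CF$ follows by left-multiplication, and then verify invertibility using $\CF = 1 + \CO(\hbar)$.

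To set this up, I would view both $\CE$ and $\CE_\gamma$ inside the ambient tensor product
\[
U_R(\fG)[\![\hbar]\!] \,\sstensor_{R[\![\hbar]\!]}\, S_R(\fg^*(\CO)[\![\lambda]\!])[\![\hbar]\!],
\]
using the identification $\fG_-^{\sigma,\dagger} \cong \fg^*(\CO)[\![\lambda]\!]$ and the embedding $U(t^{-1}\fg[t^{-1}][\![\lambda]\!]) \hookrightarrow U_R(\fG)$. The $R$-module decomposition $\fG = t^{-1}\fg[t^{-1}][\![\lambda]\!] \oplus \fG_+$, coming from $\fg(\CK) = t^{-1}\fg[t^{-1}] \oplus \fg(\CO)$ together with $\fG_+ = T_R \ltimes \fg(\CO)[\![\lambda]\!]$, induces a PBW factorization
\[
U_R(\fG) \cong U(t^{-1}\fg[t^{-1}][\![\lambda]\!]) \,\rltensor_R\, U_R(\fG_+)
\]
of cocommutative $R$-coalgebras; this is precisely the splitting used to define $\CF'$.

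Next, I would exploit the convolution law for tensor multiplication: for tensors $A, B$ representing $R[\![\hbar]\!]$-linear maps $\alpha, \beta$, the product $A \cdot B$ acts on a group-like element $e^{\hbar y}$ (with $y \in \fG_-^\sigma$) as $\alpha(e^{\hbar y}) \cdot \beta(e^{\hbar y})$, since $\Delta(e^{\hbar y}) = e^{\hbar y} \otimes e^{\hbar y}$. By Proposition \ref{Prop:rembedTheta}.3, $\CE(e^{\hbar y}) = e^{\hbar y}$; the tensor $\CE_\gamma$, by its defining property, extends the embedding $U(t^{-1}\fg[t^{-1}][\![\lambda]\!]) \hookrightarrow U_R(\fG)$ so that $\CE_\gamma(e^{\hbar y})$ equals the first PBW factor of $e^{\hbar y}$ in the decomposition above; and $\CF'(e^{\hbar y})$ is, by construction, the complementary second PBW factor in $U_R(\fG_+)$. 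Their product in $U_R(\fG)$ reassembles $e^{\hbar y}$, so $(\CE_\gamma \cdot \CF')(e^{\hbar y}) = \CE(e^{\hbar y})$. Since tensors are determined by their values on such group-likes (Proposition \ref{prop:coproduct_symmetric_algebra_explicit}.1), we conclude $\CE = \CE_\gamma \cdot \CF'$, hence $\CF' = \CF$; the same argument simultaneously shows that $\CF$ lies in $U_R(\fG_+)[\![\hbar]\!] \sstensor_{R[\![\hbar]\!]} S_R(\fG_-^{\sigma,\dagger})[\![\hbar]\!]$ rather than only in the larger $U_R(\fG)[\![\hbar]\!] \sstensor_{R[\![\hbar]\!]} S_R(\fG_-^{\sigma,\dagger})[\![\hbar]\!]$.

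Invertibility is then immediate: both $\CE$ and $\CE_\gamma$ are exponentials of $\CO(\hbar)$-elements, so $\CF = 1 + \CO(\hbar)$ admits a geometric-series inverse in the $\hbar$-adic topology, with the inverse lying in the same space. The main technical hurdle I anticipate is reconciling the different tensor conventions in a single ambient object: $\CE_\gamma$ was originally built in the Yangian setting with factors in $U(t^{-1}\fg[t^{-1}])$ and $S(\fg^*(\CO))$, whereas $\CE$ lives in the groupoid setting with factors in $U_R(\fG)$ and $S_R(\fG_-^{\sigma,\dagger})$. These have to be identified unambiguously so that both the product $\CE_\gamma \cdot \CF'$ and the convolution-law evaluation on $e^{\hbar y}$ are well-defined, and in particular so that the identification $\CF_{\fG_-^{\sigma,\dagger}} \cong \fg^*(\CO)[\![\lambda]\!]$ correctly matches the PBW projection onto $U_R(\fG_+)$ rather than onto a different complement.
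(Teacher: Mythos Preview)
Your proof is correct and follows essentially the same approach as the paper: decompose $e^{\hbar y}$ via the PBW factorization $U_R(\fG) \cong U(t^{-1}\fg[t^{-1}][\![\lambda]\!]) \rltensor_R U_R(\fG_+)$, identify $\CE_\gamma(e^{\hbar y})$ and $\CF'(e^{\hbar y})$ as the two factors so that $\CE_\gamma\CF' = \CE$ on group-likes, and conclude $\CF' = \CF$ with invertibility from $\CF = 1 + \CO(\hbar)$. Your explicit invocation of the convolution law and your closing remarks about reconciling the tensor conventions are more detailed than the paper's terse version, but the substance is the same.
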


\begin{proof}

 Let $y\in \mathfrak G_-^\sigma$. Since $e^{\hbar y}$ is an element in $U_R (\mathfrak G)$, there is a unique decomposition
    \be
e^{\hbar y}=e^{\hbar x_1}e^{\hbar x_2}, \qquad x_1\in t^{-1}\fg [t^{-1}][\![\lambda]\!]\lbb\hbar\rbb,  x_2\in \mathfrak G_+\lbb\hbar\rbb 
    \ee
    according to Lemma \ref{Lem:groupoid_like_elements}.
    Of course, the isomorphism between $U_R (\mathfrak G_-^\sigma)$ and $U_R (t^{-1}\fg [t^{-1}][\![\lambda]\!])$ is defined by $e^{\hbar y}\mapsto e^{\hbar x_2}$, and therefore the above implies
    \be
\CE=\CE_\gamma \CF'.
    \ee
    Now $\CF$ is invertible since $\CE_\gamma$ and $\CE$ are. The proof is complete. 
    
\end{proof}

The proof of Theorem \ref{Thm:quantumtwist}.2 is identical to the proof of \cite[Proposition 3.7]{abedin2024yangian}. Although the proof there is in the context of splitting of Lie algebras, the same argument goes through for the Lie algebroid splitting $\mf G=\mf G_+\oplus \mf G_-^\sigma$, using a similar statement about the universal enveloping algebra as in \ref{prop:matchedULie}. 

It remains to prove Theorem \ref{Thm:quantumtwist}.3. Let us write $\CF=\sum_{i \in I} \CF^{(1)}\otimes \CF^{(2)}$. We first show that $(s_\gamma)_\CF=s_\gamma=s$. For any $r\in R$, we have
\be
(s_\gamma)_\CF(r)=\sum_{i \in I}\psi (\CF_i^{(2)})(r)\CF_i^{(1)}=r(1\otimes \epsilon)(\CF)=r=s(r). 
\ee
On the other hand, for any $y\in \mf G_-^\sigma$, we have
\be
(t_\gamma)_\CF(r)(e^{\hbar y})=\sum_{i \in I}\psi (\CF_i^{(1)})(r)\langle\CF_i^{(2)},e^{\hbar y}\rangle =\psi (\CF (e^{\hbar y}))(r).
\ee
By Lemma \ref{Lem:CFCF'}, if we decompose $e^{\hbar y}=e^{\hbar x_1}e^{\hbar x_2}$ for $x_1\in t^{-1}\fg [t^{-1}][\![\lambda]\!]\lbb\hbar\rbb$ and $x_2\in \mathfrak G_+\lbb\hbar\rbb$, then $\CF (e^{\hbar y})=e^{\hbar x_2}$. Note however, that $\psi$ satisfies
\be
\psi(e^{\hbar y}) = \psi (e^{\hbar x_1}e^{\hbar x_2})=\psi (e^{\hbar x_1})\psi (e^{\hbar x_2})=\psi (e^{\hbar x_2}),
\ee
where we used the fact that $\psi (e^{\hbar x_1})=1$. Therefore
\be
\psi (\CF (e^{\hbar y}))(r)=\psi (e^{\hbar x_2})(r)=\psi (e^{\hbar y})(r)=t (r),
\ee
proving that $(t_\gamma)_\CF=t$. Finally, we have
\be
r \cdot_\CF r'=\psi (t(r'))r=\epsilon (t(r'))r=r'r
\ee
as desired. This finishes the proof of Theorem \ref{Thm:quantumtwist}.3, and concludes the proof of the whole theorem.

\subsection{Decomposing the twist}
Using the decomposition \(\CE = E\Theta\), where \(\Theta = \exp\left(\sum_{\alpha = 1}^N \partial_\alpha \otimes \omega_\alpha\right)\), from Proposition \ref{Prop:rembedTheta}, we have \(\CF = F\Theta\), where \(F \coloneqq \CE_\gamma^{-1}E\).

\begin{Prop}\label{prop:cocycle_F}
    The tensor $F$ satisfies
    \be\label{eq:cocycle_for_F}
        (1\otimes \Delta_\hbar^\gamma) (F)(\lambda) F^{(23)}(\lambda)=(\Delta_\hbar^\gamma\otimes 1 )(F)(\lambda) F^{(12)}(\lambda+\hbar \omega^{(3)}),
    \ee
    where the element $F^{(12)}(\lambda+\hbar \omega^{(3)})$ is given by
    \be\label{eq:dynamical_notation}
        F^{(12)}(\lambda+\hbar \omega^{(3)})\coloneqq (\Delta\otimes 1 )(\Theta) F^{(12)} (\Delta \otimes 1) (\Theta)^{-1} = \sum_{k = 0}^\infty \frac{\hbar^k}{k!} \sum_{i_1,\ldots, i_k}  \frac{\pd^k F}{\pd_{i_1}\cdots \pd_{i_k}} \otimes \omega_{i_1} \cdots \omega_{i_k}.
    \ee
\end{Prop}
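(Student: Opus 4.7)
The goal is to derive the dynamical cocycle for $F$ from the (non-dynamical) cocycle equation for $\CF = F\Theta$ already established in Theorem \ref{Thm:quantumtwist}.2. The idea is to substitute $\CF = F\Theta$ into the equation
\[
(\Delta_\hbar^\gamma \otimes 1)(\CF)\,\CF^{(12)} = (1 \otimes \Delta_\hbar^\gamma)(\CF)\,\CF^{(23)},
\]
systematically move all $\Theta$-factors to the right of both sides, and then show that the residual $F$-identity is precisely the claimed dynamical cocycle, with the remaining $\Theta$-tails on the two sides being equal and thus cancelable.

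\textbf{Two coproduct identities for $\Theta$ are central.} First, since $\pd_\alpha \in D_R \subseteq \Upsilon_\hbar(\fd)$ is primitive and the generators $\pd_\alpha \otimes 1 \otimes \omega_\alpha$ and $1 \otimes \pd_\beta \otimes \omega_\beta$ commute (the $\omega$'s live in a commutative subalgebra of $Y_\hbar(\fd)$ and the $\pd$'s are in different slots), I obtain
\[
(\Delta_\hbar^\gamma \otimes 1)(\Theta) = \Theta^{(13)}\Theta^{(23)} = \Theta^{(23)}\Theta^{(13)}.
\]
Second, applying the twist-equivalence $\Delta_\hbar^\gamma = \CF\Delta_\hbar^\sigma \CF^{-1}$ from Theorem \ref{Thm:quantumtwist}.2 slotwise, and combining with the identity $(1\otimes \Delta_\hbar^\sigma)(\Theta) = \Theta^{(13)}\Theta^{(12)}$ from Lemma \ref{CEdelta}, I get
\[
(1 \otimes \Delta_\hbar^\gamma)(\Theta) = \CF^{(23)}\,\Theta^{(13)}\Theta^{(12)}\,\CF^{(23), -1}.
\]

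\textbf{Main computation.} By the very definition of the dynamical notation in \eqref{eq:dynamical_notation}, $\Theta^{(13)}\Theta^{(23)} F^{(12)} = F^{(12)}(\lambda + \hbar\omega^{(3)})\Theta^{(13)}\Theta^{(23)}$. Substituting $\CF = F\Theta$ into the LHS of the $\CF$-cocycle and invoking the first identity gives
\[
\textnormal{LHS} = (\Delta_\hbar^\gamma \otimes 1)(F)\,F^{(12)}(\lambda+\hbar\omega^{(3)})\,\Theta^{(13)}\Theta^{(23)}\Theta^{(12)}.
\]
For the RHS, using the second identity and the obvious cancellation $\CF^{(23),-1}F^{(23)}\Theta^{(23)} = \Theta^{(23),-1}F^{(23),-1}F^{(23)}\Theta^{(23)} = 1$, I find
\[
\textnormal{RHS} = (1 \otimes \Delta_\hbar^\gamma)(F)\,F^{(23)}\,\Theta^{(23)}\Theta^{(13)}\Theta^{(12)}.
\]
The commutativity $\Theta^{(13)}\Theta^{(23)} = \Theta^{(23)}\Theta^{(13)}$ makes the $\Theta$-tails on both sides identical, and cancelling them from the right yields the desired equation \eqref{eq:cocycle_for_F}.

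\textbf{Main obstacle.} The most delicate part is the careful bookkeeping in the triple $\sttensor_{R[\![\hbar]\!]}$-tensor product, where the base ring $R = \C[\![\lambda]\!]$ is identified across all three slots. In particular, verifying the commutators among the various $\Theta^{(ij)}$ (noting that $[\Theta^{(12)},\Theta^{(23)}] \ne 0$ in general, unlike $[\Theta^{(13)},\Theta^{(23)}] = 0$) and making precise the interaction of the $\pd_\alpha$'s inside $\Theta$ with the $\lambda$-coefficients of $F$ requires interpreting the formal expressions within the appropriate completed topological tensor product, as flagged in Remark \ref{Rem:topology}.
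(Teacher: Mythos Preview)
Your proof is correct and follows essentially the same route as the paper: start from the cocycle equation for $\CF = F\Theta$ in Theorem \ref{Thm:quantumtwist}.2, use the identities $(\Delta_\hbar^\gamma\otimes 1)(\Theta)=(\Delta\otimes 1)(\Theta)=\Theta^{(13)}\Theta^{(23)}$ and $(1\otimes \Delta_\hbar^\gamma)(\Theta)\CF^{(23)}=\CF^{(23)}(1\otimes \Delta_\hbar^\sigma)(\Theta)=\CF^{(23)}\Theta^{(13)}\Theta^{(12)}$ (the latter via Lemma \ref{CEdelta} and the twist relation), and then cancel the common $\Theta$-tail $\Theta^{(23)}\Theta^{(13)}\Theta^{(12)}=\Theta^{(13)}\Theta^{(23)}\Theta^{(12)}$. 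Your presentation is slightly reorganized---you write the second identity as a conjugation rather than as a commutation past $\CF^{(23)}$---but the content is identical.
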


\begin{proof}
For a proof of Equation \ref{eq:dynamical_notation}, see \cite[Lemma 7.2, Lemma 7.3]{xu2001quantum}. We turn to proving \eqref{eq:cocycle_for_F}. Theorem \ref{Thm:quantumtwist} states that
\be\label{eq:proof_of_cocycle_F1}
(\Delta_\hbar^\gamma\otimes 1)(F)(\Delta_\hbar^\gamma \otimes 1)(\Theta) \CF^{(12)}=(1\otimes \Delta_\hbar^\gamma) (F)(1\otimes \Delta_\hbar^\gamma) (\Theta) \CF^{(23)}.
\ee
Using $\CF^{-1}\Delta_\hbar^\gamma \CF=\Delta_{\hbar}^\sigma$ and Lemma \ref{CEdelta}, the RHS is equal to
\be
(1\otimes \Delta_\hbar^\gamma) (F)\CF^{(23)} (1\otimes \Delta_{\hbar}^\sigma)(\Theta)=(1\otimes \Delta_\hbar^\gamma) (F) F^{(23)}\Theta^{(23)} \Theta^{(13)}\Theta^{(12)}. 
\ee
On the other hand, since $(\Delta_\hbar^\gamma\otimes 1)(\Theta)=(\Delta\otimes 1)(\Theta)$, the LHS of \eqref{eq:proof_of_cocycle_F1} is equal to
\be
\begin{split}
    &(\Delta_\hbar^\gamma\otimes 1)(F)(\lambda) F^{(12)} (\lambda+\hbar \omega^{(3)}) (\Delta\otimes 1)(\Theta) \Theta^{(12)}\\&=(\Delta_\hbar^\gamma\otimes 1)(F)(\lambda) F^{(12)} (\lambda+\hbar \omega^{(3)}) \Theta^{(23)}\Theta^{(13)}\Theta^{(12)}.
\end{split}
\ee
Equating LHS with RHS we get the desired result. 

\end{proof}

Since $\CF$ is not valued in the subalgebroid $\wt \Upsilon_\hbar(\fd)$, in principle it does not make sense to conjugate by this $\CF$. The tensor element $F$ does, therefore conjugating by $F$ makes sense in this subalgebroid. Unfortunately, it does not induce a twist equivalence of Hopf algebroid. How do we deal with the fact that the twist $\CF$ is not inner for $\wt \Upsilon_\hbar(\fd)$?

We note that even though $\Theta$ is not an element in the above algebroid, conjugation by $\Theta$ does still make sense, namely it leaves the subspace $\wt\Upsilon_\hbar (\fd)$ invariant. In fact, conjugating by $\Theta$ is an automorphism of the algebra $\wt\Upsilon_\hbar (\fd)\otimes \wt\Upsilon_\hbar (\fd)$, which can be used to twist any module. 

We can therefore interpret conjugation by $\CF=F\Theta$ in the following two steps:
\begin{enumerate}
    \item Pulling back the tensor product modules along the automorphism $\Theta$.

    \item Apply the linear map $F$. 
    
\end{enumerate}
This process intertwines the monoidal structure of the two categories, and is clearly invertible. We therefore have the following result. 

\begin{Thm}
    There is an equivalence of monoidal categories
    \be
\wt\Upsilon_\hbar (\fd)\Mod\simeq \wt \Upsilon_\hbar^\sigma (\fd) \Mod,
    \ee
    realized by the twisting $\CF=F\Theta$. 
    
\end{Thm}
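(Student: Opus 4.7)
The plan is to realize $\CF = F\Theta$ as a composition of two operations on the monoidal structure. Since the underlying algebras $\wt\Upsilon_\hbar(\fd)$ and $\wt\Upsilon_\hbar^\sigma(\fd)$ coincide as $R[\![\hbar]\!]$-algebras (both being $Y_\hbar(\fd)[\![\lambda]\!]$), there is a canonical equivalence of underlying abelian categories of modules given by the identity on objects. The substance of the theorem is to construct a monoidal coherence natural isomorphism intertwining the two tensor products associated to $\Delta_\hbar^\sigma$ and $\Delta_\hbar^\gamma$, and to verify the associated coherence axioms.

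First I would show that $\mathrm{Ad}(\Theta)$ defines an algebra automorphism of $\wt\Upsilon_\hbar(\fd)\,\sttensor_{R[\![\hbar]\!]}\,\wt\Upsilon_\hbar(\fd)$, even though $\Theta$ itself lies only in $D_R[\![\hbar]\!]\,\sttensor\,\wt\Upsilon_\hbar(\fd)$. The reason, as indicated in the discussion preceding the theorem, is that the iterated commutators $\mathrm{ad}(\hbar \partial_\alpha \otimes \omega_\alpha)^k(x(\lambda) \otimes y(\lambda))$ can be reorganized using the Leibniz rule so that all uncancelled $\partial_\alpha$ factors collapse into genuine $\lambda$-derivatives acting on the coefficients, since the $\omega_\alpha \in \fg^*(\CO)[\![\lambda]\!]$ form an abelian subalgebra of $Y_\hbar(\fd)[\![\lambda]\!]$ and the commutators $[\omega_\alpha, y(\lambda)]$ stay within $Y_\hbar(\fd)[\![\lambda]\!]$. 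The resulting automorphism should be interpreted as the Taylor-shift automorphism $x(\lambda) \otimes y(\lambda) \mapsto x(\lambda + \hbar\omega^{(2)}) \otimes y(\lambda)$ in the dynamical notation from \eqref{eq:dynamical_notation}.

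For modules $M, N$ over $\wt\Upsilon_\hbar(\fd)$, I then define the coherence isomorphism $M\otimes^{\Delta_\hbar^\sigma} N \xrightarrow{\sim} M\otimes^{\Delta_\hbar^\gamma} N$ as the two-step composition: first pull back the $\wt\Upsilon_\hbar(\fd)\,\sttensor\,\wt\Upsilon_\hbar(\fd)$-action along $\mathrm{Ad}(\Theta^{-1})$, then apply left-multiplication by $F^{-1}$, which is a genuine invertible element of $\wt\Upsilon_\hbar(\fd)\,\sttensor\,\wt\Upsilon_\hbar(\fd)$. The identity $\CF^{-1}\Delta_\hbar^\gamma\CF = \Delta_\hbar^\sigma$ from Theorem \ref{Thm:quantumtwist}.2, restricted to $\wt\Upsilon_\hbar^\sigma(\fd)$, then verifies that this composition intertwines the two actions and is a module morphism.

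The hardest step will be the pentagon coherence for triple tensor products $M \otimes N \otimes P$. This reduces precisely to the dynamical cocycle identity of Proposition \ref{prop:cocycle_F}:
\[
(1 \otimes \Delta_\hbar^\gamma)(F)\, F^{(23)}(\lambda) = (\Delta_\hbar^\gamma \otimes 1)(F)\, F^{(12)}(\lambda + \hbar\omega^{(3)}),
\]
where the dynamical shift $F^{(12)}(\lambda + \hbar\omega^{(3)})$ is by definition the conjugate of $F^{(12)}$ by $(\Delta \otimes 1)(\Theta)$, and encodes exactly the additional $\Theta$-twist arising from the third tensor factor in the pentagon. Invertibility of $\CF$ from Theorem \ref{Thm:quantumtwist}.1 provides the inverse equivalence, constructed symmetrically from $F^{-1}$ and $\mathrm{Ad}(\Theta^{-1})$, completing the proof.
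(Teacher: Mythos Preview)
Your approach is essentially the same as the paper's: the paper gives no formal proof beyond the discussion immediately preceding the theorem, which outlines exactly your two-step procedure (pull back along the automorphism given by $\Theta$-conjugation, then apply the inner element $F$) and declares the result ``clearly invertible.'' Your proposal in fact supplies more detail than the paper does, in particular the explicit reduction of the pentagon coherence to the dynamical cocycle identity of Proposition~\ref{prop:cocycle_F}, which the paper does not spell out.
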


\subsection{The $R$-matrix}

We now construct a solution of the dynamical Yang-Baxter equation. Let
\be\label{eq:universal_R}
\CR(z, w; \lambda) \coloneqq \lp (\tau_z\otimes \tau_w) (F(\lambda))\rp^{(21), -1} \CR_\gamma(z-w) (\tau_z\otimes \tau_w)(F(\lambda)). 
\ee
Note that since $F$ is an element in $Y_\hbar(\fd)[\![\lambda]\!]\,{}^s\!\otimes_{R[\![\hbar]\!]}^sY_\hbar (\fd)\lbb\lambda\rbb$, the series $(\tau_z\otimes \tau_w) F$ is a formal power series in the formal variables $z$ and $w$. The multiplication $\CR_\gamma(z-w) (\tau_z\otimes \tau_w) (F(\lambda))$ is therefore ill-defined since $\CR_\gamma(z-w)$ is a power series in $(z-w)^{-1}$. To make sense of this, as in \cite[Section 4.3.1]{abedin2024yangian}, we can restrict ourselves to the action of these on a product of smooth modules that are finitely-generated and flat over $R\lbb\hbar\rbb$.

More precisely, we consider modules $M, N$ of $Y_\hbar (\fd)\lbb\lambda\rbb$ that are of the form $M=V[\![\lambda;\hbar\rbb$ and $N=U[\![\lambda;\hbar]\!]$ for some finite-dimensional vector spaces $V$ and $U$ and so that there is some $N>0$ such that $t^k I_\alpha$ and $t^kI^\alpha$ acts trivially on them for $k>N$ and \(\alpha \in \{1,\dots,N\}\). The tensor product $M\sttensor_{R\lbb\hbar\rbb} N$ can be naturally identified with $(V\otimes U)\lbb\lambda;\hbar\rbb$. Now $F(z,w)\coloneqq (\tau_z\otimes \tau_w )F$ acts on this space via some
\be
F_{M,N}(z,w;\lambda) \in \End (V\otimes U)\lbb\lambda;\hbar\rbb \lbb z,w\rbb=\End (V\otimes U) \lbb z,w\rbb\lbb\lambda;\hbar\rbb, 
\ee
while the element $\CR_\gamma(z-w)$ acts on this space via some
\be
\CR_{\gamma;M,N}(z-w)\in \End (V\otimes U) [(z-w)^{-1}][\![\hbar\rbb
\ee
by virtue of the defining equations \eqref{eq:R_s},\eqref{eq:expansion_in_Rs}, and \eqref{eq:definition_R}.
Both of these elements can be embedded into
\be\label{eq:Endsmooth}
\End (V\otimes U) \lbb z, w\rbb [(z-w)^{-1}] \lbb\lambda;\hbar\rbb.
\ee
Therefore, what we mean by \eqref{eq:universal_R} is actually the collection of elements
\begin{equation}\label{eq:dynamical_Rmatrix}
    \CR_{M,N}(z, w; \lambda) \coloneqq F_{M,N}(z,w;\lambda)^{(21),-1}R_{M,N}(z-w)F_{M,N}(z,w;\lambda) 
\end{equation}
of the algebra \(\End_{R[\![\hbar]\!]}(M\,\sttensor_{R[\![\hbar]\!]} N) = \End (V\otimes U) \lbb z, w\rbb [(z-w)^{-1}][\![\lambda;\hbar]\!]\) for every pair of smooth \(\Upsilon_\hbar^\sigma(\fd)\)-modules \(M\) and \(N\) that are finitely-generated and flat over \(R[\![\hbar]\!]\).

This matrix $\CR_{M,N}$, together with $\Theta (z, w)\coloneqq(\tau_z\otimes \tau_w)\Theta=(1\otimes\tau_w)\Theta$, defines an isomorphism
\be
M\otimes_{\Delta_{\hbar, z, w}^\sigma} N\simeq N\otimes_{\Delta_{\hbar, z, w}^\sigma}M,
\ee
where $\otimes_{\Delta_{\hbar, z, w}^\sigma}$ is the tensor product defined by $\Delta_{\hbar, z, w}^\sigma\coloneqq (\tau_z\otimes \tau_w) \Delta_\hbar^\sigma$.  Indeed, the two equations
\be
\Delta_\hbar^\gamma=\CF\Delta_\hbar^\sigma \CF^{-1}, \qquad \CR_\gamma(z-w)((\tau_z\otimes \tau_w)\Delta_\hbar^\gamma) \CR_\gamma(z-w)^{-1}=\lp (\tau_z\otimes \tau_w)\Delta_\hbar^\gamma\rp^{\textnormal{op}}
\ee
imply that
\be
\CF^{(21), -1}(z, w)\CR_\gamma(z-w) \CF(z,w)\Delta_{\hbar, z, w}^\sigma \CF(z,w)^{-1} \CR_\gamma(z-w)^{-1} \CF^{(21)}= \Delta_{\hbar, z, w}^{\sigma,\textnormal{op}}
\ee
holds. Here, $\CF(z,w)\coloneqq (\tau_z\otimes \tau_w )\CF = F(z.w)\Theta(z,w)$. The matrix we are using to conjugate is precisely
\be
\CF^{(21), -1}(z, w)\CR_\gamma(z-w) \CF(z,w)=\Theta^{(21), -1} (z, w)\CR (z, w;\lambda)\Theta (z,w), 
\ee
where actually all these equalities are understood on tensor products of finitely-generated smooth \(R[\![\hbar]\!]\)-modules using \eqref{eq:dynamical_Rmatrix}.

\begin{Thm}\label{thm:QDYBE}
    The family of tensors $\CR(z, w;\lambda)$ satisfies spectral dynamical quantum Yang-Baxter equation:
    \be\label{eq:dynamYB}
    \begin{split}
        &\CR^{(12)}(z_1, z_2;\lambda) \CR^{(13)}(z_1, z_3;\lambda+\hbar \omega^{(2)})\CR^{(23)} (z_2, z_3;\lambda)\\&=\CR^{(23)} (z_2, z_3;\lambda+\hbar\omega^{(1)}) \CR^{(13)}(z_1, z_3;\lambda)\CR^{(12)}(z_1, z_2;\lambda+\hbar\omega^{(3)}). 
    \end{split}
    \ee
    Here, the precise meaning of the notation, as well as its well-definiteness, is discussed in Remark \ref{rem:QDYBE} below.
\end{Thm}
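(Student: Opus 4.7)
The plan is to leverage the factorization $\CF = F\Theta$ of the dynamical twist and to relate the dynamical QYBE \eqref{eq:dynamYB} for $\CR$ to a non-dynamical (``naive'') spectral QYBE for the auxiliary $R$-matrix
\be
\wt\CR(z,w;\lambda) \coloneqq \CF^{(21)}(z,w;\lambda)^{-1}\CR_\gamma(z-w)\CF(z,w;\lambda),
\ee
which can then be translated into the dynamical QYBE via the $\Theta$-factors. First, I would clarify the notation alluded to in Remark~\ref{rem:QDYBE}: the expression $\CR^{(ij)}(z_i,z_j;\lambda + \hbar\omega^{(k)})$ for $k \notin \{i,j\}$ is defined by analogy with \eqref{eq:dynamical_notation}, namely as conjugation of $\CR^{(ij)}(z_i,z_j;\lambda)$ by the appropriate tensor components of $\Theta$ involving the $k$-th slot. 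On a triple tensor product $M_1 \sttensor_{R[\![\hbar]\!]} M_2 \sttensor_{R[\![\hbar]\!]} M_3$ of finitely-generated flat smooth modules of the form considered in \eqref{eq:Endsmooth}, both sides of \eqref{eq:dynamYB} thereby become well-defined elements of $\End_{R[\![\hbar]\!]}(M_1 \sttensor M_2 \sttensor M_3)[\![z_1,z_2,z_3]\!][(z_i-z_j)^{-1}][\![\lambda;\hbar]\!]$, and \eqref{eq:dynamYB} becomes a literal identity in this algebra.

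The core step is to establish the non-dynamical spectral quantum Yang-Baxter equation
\be\label{eq:naive_QYBE_tilde_R}
\wt\CR^{(12)}(z_1,z_2;\lambda)\,\wt\CR^{(13)}(z_1,z_3;\lambda)\,\wt\CR^{(23)}(z_2,z_3;\lambda) = \wt\CR^{(23)}(z_2,z_3;\lambda)\,\wt\CR^{(13)}(z_1,z_3;\lambda)\,\wt\CR^{(12)}(z_1,z_2;\lambda)
\ee
for $\wt\CR$. This is a formal consequence of three inputs: the intertwining property \eqref{eq:Rmatrix_for_yangian} of $\CR_\gamma$ with respect to $\Delta_\hbar^\gamma$, the full (non-shifted) twist cocycle $(\Delta_\hbar^\gamma\otimes 1)(\CF)\CF^{(12)} = (1\otimes \Delta_\hbar^\gamma)(\CF)\CF^{(23)}$ from Theorem~\ref{Thm:quantumtwist}.2, and the QYBE \eqref{eq:quantum_Yang_Baxter} for $\CR_\gamma$. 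The argument is the standard twist computation: expanding each $\wt\CR^{(ij)}$ via its definition, one uses the cocycle to combine the $\CF$-factors into threefold tensors flanking a central product $\CR_\gamma^{(12)}\CR_\gamma^{(13)}\CR_\gamma^{(23)}$, swaps this product via \eqref{eq:quantum_Yang_Baxter}, and applies the cocycle in reverse to reassemble the right-hand side.

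Finally, I would substitute $\CF = F\Theta$ into \eqref{eq:naive_QYBE_tilde_R}. Each $\Theta$-factor, when commuted past an $F$- or $\CR_\gamma$-factor located at a different tensor slot, introduces a shift of the form $\lambda \mapsto \lambda + \hbar\omega^{(k)}$ by virtue of \eqref{eq:dynamical_notation}. A careful bookkeeping then shows that the accumulated shifts, once collected on the $\CR$-factors, reproduce exactly the pattern in \eqref{eq:dynamYB}.

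The main obstacle is this last step of bookkeeping. Each $\Theta^{(ij)}$ contributes a shift whose direction depends on the pair of tensor positions involved, and systematically accounting for all such contributions to verify they match the shifts prescribed in \eqref{eq:dynamYB} is the technical heart of the argument. The underlying principle, familiar from the theory of dynamical quantum groups, is that a Hopf algebroid twist of a quasi-triangular Hopf algebra produces a dynamical $R$-matrix whose shifts are precisely dictated by the $\Theta$-part of the twist.
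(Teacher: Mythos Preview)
Your overall strategy is a genuine alternative to the paper's, but the middle step has a gap in the ingredients you list.

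The paper does not pass through a non-dynamical QYBE for $\wt\CR$. Instead it works with $F$ alone (not $\CF$) and invokes the quasi-Hopf framework of \cite{babelon1996quasi}: treating $F(z_1,z_2;\lambda)$ as a spectral twist for $\Delta_{\hbar,z_1,z_2}^\gamma$, it reads off the resulting associator from the dynamical cocycle \eqref{eq:cocycle_for_F} as $\Phi = F^{(12),-1}(\lambda+\hbar\omega^{(3)})F^{(12)}(\lambda)$, and then observes that plugging this particular $\Phi$ into the generic quasi-Yang--Baxter equation collapses in one line to \eqref{eq:dynamYB}. This keeps everything inside $Y_\hbar(\fd)[\![\lambda]\!]$ and avoids differential operators altogether. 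Your route via the full Hopf-algebroid twist $\CF$ is conceptually cleaner (no quasi-Hopf detour), but shifts all the work into the $\Theta$-bookkeeping you flag at the end, and one must be careful that the individual tensors $\Theta^{(ij)}$ make sense in the various $\sttensor_R$ products involved.

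On the gap: the three inputs you list are not enough to run the ``standard twist computation'' you describe. There is no way to simplify products like $\CF^{(12)}\CF^{(31),-1}$ from the cocycle, the intertwining property, and the QYBE for $\CR_\gamma$ alone, so one cannot literally sandwich $\CR_\gamma^{(12)}\CR_\gamma^{(13)}\CR_\gamma^{(23)}$ between threefold $\CF$-factors that way. What the argument actually requires in addition are the spectral hexagon identities $(\Delta_{\hbar,z_1-z_2,0}^\gamma\otimes 1)\CR_\gamma(z_2-z_3) = \CR_\gamma^{(13)}\CR_\gamma^{(23)}$ and its partner; these are \cite[Theorem~4.22.2--3]{abedin2024yangian} and are quoted at the very start of the paper's own proof. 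With those in hand the twisted-quasi-triangular argument goes through and yields \eqref{eq:naive_QYBE_tilde_R}, so this is a repairable omission rather than a fatal one.
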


\begin{Rem}\label{rem:QDYBE}
    Actually, by \eqref{eq:dynamYB}, we mean the family of equations depending on three smooth \(Y_\hbar(\fd)[\![\lambda]\!]\)-modules of the form $M=U\lbb\lambda;\hbar\rbb, N=V\lbb\lambda;\hbar\rbb, P=W\lbb\lambda;\hbar\rbb$ 
    for three vector spaces $U, V, W$ inside
    \be\label{eq:triple_product_smooth_modules}
        \End (U\otimes V\otimes W)\lbb z_1, z_2, z_3\rbb\left[\lp(z_1-z_2)(z_1-z_3)(z_2-z_3)\rp^{-1}\right]\lbb\lambda;\hbar\rbb,
    \ee
   where e.g.\ \(\CR^{(12)}(z_1,z_2;\lambda) = \CR_{M,N}(z_1,z_2;\lambda) \otimes 1\). The dynamical notations are defined analogous to \eqref{eq:dynamical_notation}.
        
\end{Rem}

\begin{proof}[Proof of Theorem \ref{thm:QDYBE}]
  All equalities in the following proof should be understood as equalities as in Remark \ref{rem:QDYBE} inside \eqref{eq:triple_product_smooth_modules} after acting on a tensor product on smooth modules which are finitely-generated and flat over $R\lbb\hbar\rbb$. Let 
  \be
    \Delta_{\hbar, z_1, z_2}^\gamma \coloneqq (\tau_{z_1}\otimes  \tau_{z_2})\Delta_\hbar^\gamma=(\tau_{z_1-z_2}\otimes 1 )\Delta_\hbar^\gamma \tau_{z_2}.
  \ee
  The cocycle conditions \cite[Theorem 4.22.2. \& 3.]{abedin2024yangian} take the form:
    \be
    \begin{split}
        &\left(\Delta_{\hbar, z_1-z_2,0}^\gamma \otimes 1\right)(\CR_\gamma(z_2-z_3))=\CR_\gamma^{(13)}(z_1-z_3) \CR_\gamma^{(23)}(z_2-z_3), \\&\left(1\otimes \Delta_{\hbar, z_1-z_2,0}^\gamma\right) (\CR_\gamma(z_1-z_3))=\CR_\gamma^{(13)}(z_1-z_3)\CR_\gamma^{(12)}(z_1-z_2). 
    \end{split}
    \ee
    Furthermore, \ref{eq:cocycle_for_F} can be written as:
    \be\label{eq:cocycle_F_adjusted}
        \begin{split}
            &\left(\Delta_{\hbar, z_1-z_2,0}^\gamma\otimes 1\right)(F(z_2, z_3;\lambda))F^{(12)}(z_1, z_2;\lambda+\hbar \omega^3)\\&=\left(1\otimes \Delta_{\hbar, z_2-z_3,0}^\gamma\right)(F(z_1, z_3;\lambda)) F^{(23)}(z_2, z_3;\lambda).
        \end{split}
    \ee
    
Let us apply the general argument of \cite{babelon1996quasi}, with spectral parameter inserted. Namely, we can think of $F(z_1, z_2;\lambda)$ as a twist for the coproduct $\Delta_{\hbar,z_1, z_2}^\gamma$, resulting in a quasi-Hopf algebra (in the presence of spectral parameters) with associativity isomorphism $\Phi = \Phi (z_1, z_2, z_3;\lambda)$ given by
\be
\left(\left(1\otimes \Delta_{\hbar,z_2-z_3, 0}^\gamma\right) (F(z_1, z_3;\lambda)) F^{(23)}(z_2, z_3;\lambda)\right)^{-1}\left(\Delta_{\hbar,z_1-z_2, 0}^\gamma\otimes 1\right)(F(z_2, z_3;\lambda))F^{(12)}(z_1, z_2;\lambda) 
\ee
for which $\CR$ is precisely the twisted spectral $R$-matrix. These \(\Phi\) must satisfy the quasi Yang-Baxter equation:
\be\label{eq:quasiYB}
\Phi^{(321), -1} \CR^{(12)} \Phi^{(312)} \CR^{(13)}\Phi^{(132), -1} \CR^{(23)}=\CR^{(23)}\Phi^{(231), -1} \CR^{(13)}\Phi^{(213)}\CR^{(12)}\Phi^{(123), -1},
\ee
where we omitted the spectral and dynamical parameters, which are inserted at the natural places of this equation. 

In our case, \eqref{eq:cocycle_F_adjusted} implies that
\be
\Phi(z_1,z_2,z_3;\lambda)=F^{(12),-1}(z_1, z_2, \lambda+\hbar \omega^3)F^{(12)}(z_1,z_2;\lambda).
\ee
Observe the independence of \(z_3\). From this we can deduce:
\be
\begin{aligned}
    &\Phi^{(213)}(\lambda) \CR^{(12)}(\lambda) \Phi^{(123), -1}(\lambda)\\& = F^{(21), -1}(\lambda+\hbar \omega^3) F^{(21)}(\lambda)F^{(21), -1}(\lambda)\CR_\gamma^{(12)} F^{(12)} (\lambda)F^{(12),-1}(\lambda) F^{(12)}(\lambda+\hbar \omega^3) \\ &= F^{(21), -1}(z_1, z_2, \lambda+\hbar \omega^3) \CR_\gamma^{(12)}  F^{(12)}(\lambda+\hbar \omega^3)=\CR^{(12)}(\lambda+\hbar \omega^3),
\end{aligned}
\ee
where we omitted the dependence on the spectral parameters \(z_1,z_2,z_3\).
Combining this equality with \eqref{eq:quasiYB}, we obtain \eqref{eq:dynamYB}, which completes the proof.
\end{proof}

\newpage

\appendix

\section{Notation}

\begin{itemize}
    \item \(\Sigma\) is a smooth projective irreducible complex algebraic curve, \(p \in \Sigma\) is a fixed point, \(\Sigma^{\circ} \coloneqq \Sigma \setminus \{p\}\).

    \item \(t\) is local coordinate at \(p \in \Sigma\) and we identify \(\CO \coloneqq \CO_{\Sigma,p} \cong \C[\![t]\!]\), \(\CK \coloneqq \CO_{\Sigma,p}[t^{-1}] \cong \C(\!(t)\!)\).
    
    \item \(G\) complex connected simple algebraic group, \(\fg\) is the Lie algebra of \(G\), and \(\kappa \colon \fg \times \fg \to \C\) is the Killing form of \(\fg\).
    
    \item \(G(\CO), G(\CK)\), and \(G(\Sigma^\circ) \coloneqq G(\CO(\Sigma^\circ))\) are the loop groups of regular functions from the formal disc around \(p\), the punctured formal disc around \(p\), and the affine curve \(\Sigma^\circ\) to \(G\) respectively.

    \item \(\fg(\CO) = \fg \otimes \CO, \fg(\CK) = \fg \otimes \CK\), and \(\fg(\Sigma^\circ) = \fg \otimes \CO_\Sigma(\Sigma^\circ)\) are the Lie algebras of \(G(\CO), G(\CK)\), and \(G(\Sigma^\circ)\) respectively.

    \item \(\Bun(\Sigma) = G(\CO) \setminus G(\CK) \,/\, G(\Sigma^\circ)\) is the moduli stack of \(G\)-bundles on \(\Sigma\) and \(\Bun^0(\Sigma)\) is the coarse moduli space of regularly stable \(G\)-bundles on \(\Sigma\).

    \item \(\CP\) is a fixed regularly stable \(G\)-bundle on \(\Sigma\), \(B\) is the completion of \(\Bun^0(\Sigma)\) at \(P\), and \(\sigma \colon B \to G(\CK)\) is a section of \(G(\CK) \to \Bun\) over \(B\).

    \item In Section \ref{sec:generalities}, \(A\) is a (potentially non-commutative) \(\C\)-algebra and \(R\) is a commutative \(\C\)-algebra. From Section \ref{sec:LieHitchin} onwards, \(R \coloneqq \C[\![\lambda_1,\dots,\lambda_N]\!] \coloneqq \C[\![\lambda]\!]\) for \(\lambda = (\lambda_1,\dots,\lambda_N)\), where \(N = \textnormal{dim}(B)\). We fix \(B \cong \textnormal{Spec}(\C[\![\lambda]\!])\) and write \(\partial_\alpha \coloneqq \partial/\partial\lambda_\alpha\) as well as \(T_R \coloneqq  \bigoplus_{\alpha = 1}^N\C[\![\lambda]\!]\partial_\alpha\) for the Lie algebra of derivations of \(R\) and \(D_R \coloneqq U_R(T_R)\) for the Hopf algebroid of differential operators of \(R\).

    \item In Section \ref{sec:generalities}, \(\fG\) is some Lie algebroid over \(R\). From Section \ref{sec:LieHitchin} onwards, \(\fG_+ = T_R \ltimes \fg(\CO)[\![\lambda]\!] \subseteq T_R \ltimes \fg(\CK)[\![\lambda]\!] \eqqcolon \fG\) and \(\fG^\sigma_-\) is the complement of $\fG_+$ inside $\fG$ defined by $\sigma$ in Section \ref{sec:lie_bialgebroid}. 

    \item In section \ref{sec:generalities}, \(\fD\) is some Courant algebroid. From Section \ref{sec:LieHitchin} onwards, \(\fD\) is the trivial double of \(\fG\), with $\fD_+ = \fG_+ \oplus \fG_+^\bot$, and $\fD_-^\sigma = \fG_-^\sigma \oplus \fG_-^{\sigma,\bot}$.
    Here, \((\cdot)^\bot\) is take with respect to the canonical pairing \(\langle, \rangle \colon \fD \times \fD \to R\). Observe that on \(\fd(\CK)[\![\lambda]\!]\), \(\langle ,\rangle \) is given in formulas by the residue of the \(\CK[\![\lambda]\!]\)-bilinear extension of the canonical pairing \(\fd \times \fd \to \C\).
    
    \item \((\cdot)^*\) denotes the (topological) dual with respect to \(\C\) and \((\cdot)^\dagger\) is the (topological) dual with respect to the base ring \(R\).

    \item \(Y_\hbar(\fd)\) is the Yangian of \(\fd\) and \(\Delta_\hbar^\gamma\) is its coproduct, and \(\CR_\gamma\) is its spectral \(R\)-matrix. Furthermore,
    \(\Upsilon_\hbar^\sigma(\fd)\) is the Hopf algebroid quantizing the Lie bialgebroid structure on $\fD_+$ determined by \(\fD = \fD_+ \oplus \fD_-^\sigma\), \(\Delta_\hbar^\gamma\) is its coproduct, and \(\CR\) is its spectral dynamical \(R\)-matrix. $\Upsilon_\hbar (\fd)$ is the trivial dynamicalification of $Y_\hbar (\fd)$ over $B$. The Hopf algebroids $\wt \Upsilon_\hbar^\sigma(\fd)$ and $\wt\Upsilon_\hbar (\fd)$ are the subalgebroids generated by $\fd (\CO)$ and $R$, i.e.\ the subalgebroids without differential operators. The standard symmetric coproduct of \(U_R(\fD)\), as well as its \(\C[\![\hbar]\!]\)-linear extension to \(U_R(\fD)[\![\hbar]\!]\), is simply denoted by \(\Delta\).

     \item Different tensors are used in this work. The undecorated tensor product $\otimes$ always means tensor product over $\C$, the tensor product of left modules (resp.\ right modules) over a commutative ring $R$ is denoted by $\otimes_R$, and the tensor product of bimodules is denoted by $\rltensor_R$. In the situation of Hopf algebroids, where the bimodule structure is induced by the source map \(s\) and target map \(t\), ${}^s\!\otimes_R^t$ is the tensor product is also used for the tensor product of bimodules (see Definition \ref{def:hopf_algebroid}) and if \(s = t\) we also write ${}^s\!\otimes_R^s$, which is also equal to $\otimes_R$. On the level of elements, all decorations are dropped in order to keep the notation clean, so we recommend the reader to stay aware in which space calculations under consideration take place.
    
\end{itemize}

\newpage

   \bibliographystyle{amsalpha}
   
   \bibliography{Yangian}

\end{document}